\numberwithin{equation}{section}
\theoremstyle{plain}
        \newtheorem{theorem}{Theorem}[section]
        \newtheorem{lemma}[theorem]{Lemma}
        \newtheorem{proposition}[theorem]{Proposition}
        \newtheorem{corollary}[theorem]{Corollary}
\theoremstyle{definition}
        \newtheorem{definition}[theorem]{Definition}
        \newtheorem{remark}[theorem]{Remark}
        \newtheorem{example}[theorem]{Example}
\newcommand{\C}{\mathbb{C}}
\newcommand{\R}{\mathbb{R}}
\newcommand{\Q}{\mathbb{Q}}
\newcommand{\K}{\mathbb{K}}
\newcommand{\N}{\mathbb{N}}
\newcommand{\Z}{\mathbb{Z}}
\newcommand{\calA}{\mathcal{A}}
\newcommand{\calC}{\mathcal{C}}
\newcommand{\calD}{\mathcal{D}}
\newcommand{\calL}{\mathcal{L}}
\newcommand{\calN}{\mathcal{N}}
\newcommand{\calO}{\mathcal{O}}
\newcommand{\calP}{\mathcal{P}}
\newcommand{\calJ}{\mathcal{J}}
\newcommand{\calI}{\mathcal{I}}
\newcommand {\id} {\operatorname{id}}
\newcommand{\ov}{\overline}
\begin{document}
\title[]{Whitney functions determine the real homotopy type of a semi-analytic set}

\author{Bryce Chriestenson and Markus Pflaum}
\address{Mathematisches Institut, Ruprecht-Karls-Universit\"at, Im Neuenheimer Feld 288, \newline
         \indent 69120 Heidelberg, Germany}
\email{bchriestenson@mathi.uni-heidelberg.de} 
\address{Department of Mathematics, University of Colorado, Boulder, CO 80309-0395, U.S.A.}   
\email{Markus.Pflaum@colorado.edu}

\newcommand{\cat}[ 1 ] {\mathsf{#1}}
\newcommand{ \Ckf }[ 2 ]{ \ensuremath{ \calC^{ #2 }\left( #1 \right) } }
\newcommand{ \Dep }[ 1 ]{ \ensuremath{ \mathrm{ dp }_{ \mathcal{ Z } } \left( #1 \right) } }
\newcommand{ \Der }[ 2 ]{ \ensuremath{ \mathrm{ Der }_{ \RR } \left( #1 , #2 \right) } } 
\newcommand{ \Dim }[1]{ \ensuremath{ \mathrm{dim} \hspace{ 3pt } #1 } }
\newcommand{ \Dcx }[ 1 ]{ \ensuremath{ \Omega^{ \bullet }_{ \mathrm{ dR } } \left( #1 \right) } }
\newcommand{ \Df }[ 2 ]{ \ensuremath{ \Omega^{ #2 }_{ \mathrm{ dR } } \left( #1 \right) } }
\newcommand{ \Ds }[ 2 ]{ \ensuremath{ \Omega^{ #2 }_{  #1  } } }
\newcommand{ \Hd }[ 2 ]{ \ensuremath{ \mathrm{ H }_{ dR }^{ #2 } \left( #1 \right) } }
\newcommand{ \Hr }[ 3 ]{ \ensuremath{ \mathrm{ H }_{W}^{ #3 } \left( #1 , #2 \right) } }
\newcommand{ \HS }[ 3 ]{ \ensuremath{ \mathrm{ H }^{ #3 } \left( #1 , #2 \right) } }
\newcommand{ \Hw }[ 2 ]{ \ensuremath{ \mathrm{ H }_{ W }^{ #2 } \left( #1 \right) } }
\newcommand{\HH}[ 2 ]{ \ensuremath{ \mathrm{HH}_{ #2 } \left( #1 \right) } }
\newcommand{ \Hom }[ 2 ]{ \ensuremath{ \mathrm{ Hom }_{ \RR } \left( #1 , #2 \right) } }
\newcommand{ \Shom }[ 3 ]{ \ensuremath{ \mathrm{ Hom }_{ \Sf{ #3 } } \left( #1 , #2 \right) } }
\newcommand{ \Jet }[ 2 ]{ \ensuremath{ \mathrm{ J }^{ #2 } \left( #1 \right) } }
\newcommand{ \Jf }[ 2 ]{ \ensuremath{ \mathcal{ J }^{ \infty }\left( #1 , #2 \right) } }
\newcommand{ \Jfk }[ 3 ]{ \ensuremath{ \mathcal{ J }^{ #3 }\left( #1 , #2 \right) } }
\newcommand{ \JJ }{ \ensuremath{ \mathcal{ J }^{ \infty } } } 
\newcommand{ \Js }[ 2 ]{ \ensuremath{ \mathcal{ J }^{ \infty }_{ #1 , #2 } } } 
\newcommand{ \Jks }[ 3 ]{ \ensuremath{ \mathcal{ J }^{ #3 }_{ #1 , #2 } } }
\newcommand{ \KK }{\ensuremath{ \mathbb{K} }}
\newcommand{ \Map }[ 3 ]{ \ensuremath{ #1 : #2 \rightarrow #3 } }
\newcommand{ \NI }[ 2 ]{ \ensuremath{ \mathcal{ N }_{#2} \left( #1 \right) } }
\newcommand{ \Rdf }[ 3 ]{ \ensuremath{ \Omega^{ #3 } \left( #1 , #2 \right) } }
\newcommand{ \Rs }[ 2 ]{ \ensuremath{ \left( | #1 | , \mathcal{ #2 }_{#1} \right) } }
\newcommand{ \Sf }[ 1 ]{ \ensuremath{ \mathcal{ C }^{ \infty } \left( #1 \right) } }
\newcommand{ \Sco }[ 3 ]{ \ensuremath{ C^{ #3 } \left( #1 , #2 \right) } }
\newcommand{ \Ss }[ 1 ]{ \ensuremath{ \mathcal{ C }^{ \infty }_{ #1} } }
\newcommand{ \Shs }[ 1 ]{ \ensuremath{ \mathbb{S}_{#1 } }}
\newcommand{ \st }{ \ensuremath{ \hspace{ 3pt } \vline \hspace{ 3pt } } }
\newcommand{ \sWs }[ 2 ]{ \ensuremath{ \Gamma \left( #1 , \Ws{ #2 } \right) } }
\newcommand{ \sWds }[ 2 ]{ \ensuremath{ \Gamma \left( #1 , \Wds{ #1 }{ #2 } \right) }}
\newcommand{ \Vf }[ 1 ]{ \ensuremath{ \mathcal{ X } \left( #1 \right) } }
\newcommand{ \Wdf }[ 2 ]{ \ensuremath{ \Omega^{ #2 }_{ \mathrm{ W } } \left( #1 \right) } }
\newcommand{ \Wds }[ 2 ]{ \ensuremath{ \Omega^{ #2 }_{W, #1 } } }
\newcommand{ \Wcx }[ 1 ]{ \ensuremath{ \Omega^{ \bullet }_{ \mathrm{ W } } \left( #1 \right) } }
\newcommand{ \Wk }[ 2 ]{ \ensuremath{  \mathcal{ E }^{#2 }_{ #1 } } } 
\newcommand{ \Ws }[ 1 ]{ \ensuremath{ \mathcal{ E }^{ \infty }_{ #1 } } } 
\newcommand{ \Wf  }[ 1 ]{ \ensuremath{ \mathcal{ E }^{ \infty } \left( #1 \right) } }
\newcommand{ \WF  }[ 2 ]{ \ensuremath{ \mathcal{ E }^{ #2 } \left( #1 \right) } }

\newcommand{\qism}{
  \overset{\hspace{-0.8 em}{{}_\sim}}{\longrightarrow}
}
\newcommand{\lqism}{
  \overset{\hspace{0.8 em}{{}_\sim}}{\longleftarrow}
}
%%% Local Variables:
%%% mode: latex
%%% TeX-master: "ChrPfl"
%%% End:

\keywords{}
\subjclass[]{}
\begin{abstract}
In this paper, we investigate the Whitney--de Rham complex $\Wdf{X}{\bullet}$ associated to a semi-analytic subset $X$ of an 
analytic manifold $M$.  This complex is a commutative differential graded algebra, that is defined to be the quotient of the 
de Rham complex of smooth differential forms on $M$ by the differential graded ideal generated by all smooth functions which 
are flat on $X$.  We use Hironaka's desingularization theorem to prove a Poincar\'e Lemma for $\Wdf{X}{\bullet}$ holds true, 
which entails that its cohomology is isomorphic to the real cohomology of $X$.  Furthermore, we show that this isomorphism is 
induced by a quasi-isomorphism of differential graded algebras.  Thus it preserves the product structure, and is therefore an 
isomorphism of commutative differential graded algebras.  As a consequence we show, when $X$ is simply connected, that the 
Whitney--de Rham complex determines the real homotopy type of $X$.  This allows one further to conclude that the Hochschild
homology of the differential graded algebra $\Wdf{X}{\bullet}$ is isomorphic to the cohomology of the free loop space $\calL X$.
\end{abstract}
\maketitle
\tableofcontents
\bibliographystyle{amsplain}
\section*{Introduction}
Rational homotopy theory goes back to the fundamental work by Quillen 
\cite{quillen} and Sullivan \cite{sullivan1977infinitesimal} from the late 
60's and 70's.  The idea is to consider rational homotopy equivalence rather
than homotopy equivalence.  Two simple-connected CW-complexes $X$, and $Y$ are rationally equivalent
if there is a continuous map $f:X\rightarrow Y$ such that the induced map 
	$$\pi_k\left(f\right):\pi_k\left(X\right)\otimes \Q \rightarrow \pi_k\left(Y\right)\otimes \Q,$$
is an isomorphism for all $k$.  This condition is much weaker than than the tradional 
notion of homotopy equivalence.  For example each even dimensional real projective space, $\R P ^{2n}$, $n\geq 0$, 
is rationally equivalent to a point via the cannonical map to the point.  Thus one can see that a large amount of 
homotopy information is lost by considering only rational homotoy types.  The benefit is that things now become 
computable.  The classical Whitehead-Serre theorm already shows how much computational power is gained by ignoring 
the torsion
\begin{theorem}[Whitehead-Serre]
Let $f:X\rightarrow Y$ be a continuous map between simply connected CW-complexes then the following are equivalent
\begin{itemize}
\item $\pi_k\left(f\right):\pi_k\left(X\right)\otimes \Q \rightarrow\pi_k\left(Y\right)\otimes \Q$ is an isomorphism 
       for all $k$, 
\item $H_k\left(f\right):H_k\left(X,\Q\right)\rightarrow H_k\left(Y,\Q\right)$ is an isomorphism for all $k$.
\end{itemize}
\end{theorem}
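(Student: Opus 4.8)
The plan is to deduce the theorem from Serre's theory of classes of abelian groups, applied to the Serre class $\calC$ of torsion abelian groups, that is, those abelian groups $G$ with $G\otimes\Q = 0$. Recall that a homomorphism $\varphi$ of abelian groups is a $\calC$-isomorphism precisely when $\Ker\varphi$ and $\operatorname{coker}\varphi$ lie in $\calC$, which happens if and only if $\varphi\otimes\id_\Q$ is an isomorphism. Thus both conditions in the theorem assert that certain maps are $\calC$-isomorphisms, and the real content is to pass between the homotopy and the homology version of this statement.

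First I would replace $f$ by a cofibration. Passing to the mapping cylinder $M_f$, which is homotopy equivalent to $Y$, yields an inclusion $X\hookrightarrow M_f$ of simply connected CW-complexes inducing the same maps on homotopy and homology as $f$. Since $\Q$ is flat over $\Z$, tensoring the long exact homotopy sequence of the pair $(M_f,X)$ with $\Q$ remains exact; a five-lemma comparison with the analogous homology sequence, together with the universal coefficient identity $H_k(M_f,X;\Q)\cong H_k(M_f,X;\Z)\otimes\Q$, shows that $\pi_k(f)\otimes\id_\Q$ is an isomorphism for all $k$ if and only if $\pi_k(M_f,X)\in\calC$ for all $k$, and likewise that $H_k(f)$ is an isomorphism for all $k$ if and only if $H_k(M_f,X;\Z)\in\calC$ for all $k$. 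Hence it suffices to prove, for a CW-pair $(Z,A)$ of simply connected spaces, the equivalence
$$\pi_k(Z,A)\in\calC \text{ for all } k \iff H_k(Z,A;\Z)\in\calC \text{ for all } k.$$

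The engine for both implications is the relative Hurewicz theorem modulo $\calC$: if $A$ and $Z$ are simply connected and $\pi_i(Z,A)\in\calC$ for all $i<n$, then $H_i(Z,A;\Z)\in\calC$ for $i<n$ and the Hurewicz map $\pi_n(Z,A)\to H_n(Z,A;\Z)$ is a $\calC$-isomorphism. For the forward direction I would feed the hypothesis $\pi_k(Z,A)\in\calC$ directly into this statement to conclude $H_k(Z,A;\Z)\in\calC$ for every $k$. For the converse I would argue by induction on $n$: assuming $\pi_i(Z,A)\in\calC$ for $i<n$, the theorem makes $\pi_n(Z,A)\to H_n(Z,A;\Z)$ a $\calC$-isomorphism, and since the target lies in $\calC$ by hypothesis, so does $\pi_n(Z,A)$. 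Simple connectivity of the pair forces $\pi_1(Z,A)=0\in\calC$, which supplies the base of the induction and guarantees that every relative group in sight is abelian.

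The main work, which I would either cite or relegate to preliminaries, is establishing the mod-$\calC$ Hurewicz theorem itself and checking that the torsion abelian groups form an admissible Serre class — closed under subgroups, quotients and extensions, and stable under the $\otimes$ and $\operatorname{Tor}$ operations needed in the proof. I expect this verification, rather than the formal reduction above, to be the genuine obstacle: the Hurewicz theorem mod $\calC$ is proved by a spectral-sequence analysis of the path–loop fibration, whose Serre spectral sequence argument requires the acyclicity axiom for $\calC$. Once these foundations are in place, the equivalence of the two conditions, and with it the statement of the theorem, follows from the short inductive argument described above.
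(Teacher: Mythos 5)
The paper does not prove this statement: it quotes the Whitehead--Serre theorem as classical background (with a pointer to the rational homotopy theory literature) and never supplies an argument, so there is no internal proof to compare yours against. That said, your proposal is the standard and correct proof. The reduction is sound: for the Serre class $\calC$ of torsion abelian groups, flatness of $\Q$ over $\Z$ does give that a homomorphism is a $\calC$-isomorphism exactly when it becomes an isomorphism after $-\otimes_\Z\Q$; the mapping-cylinder replacement and the five-lemma comparison of the two long exact sequences of the pair correctly reduce everything to the equivalence of ``$\pi_k(Z,A)\in\calC$ for all $k$'' with ``$H_k(Z,A;\Z)\in\calC$ for all $k$''; and the induction driven by the relative Hurewicz theorem mod $\calC$ closes both directions (using that $\calC$ is closed under subgroups and extensions, and that simple connectivity of $A$ makes the relative groups abelian with trivial $\pi_1(A)$-action). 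You correctly identify where the real weight lies -- the mod-$\calC$ Hurewicz theorem and the verification that torsion groups form an acyclic Serre class (closed under subgroups, quotients, extensions, $\otimes$ and $\operatorname{Tor}$, with $H_*(K(A,1);\Z)\in\calC$ for $A\in\calC$) -- and these are exactly the standard ingredients one would cite from Serre's \emph{Groupes d'homotopie et classes de groupes ab\'eliens} or from F\'elix--Halperin--Thomas. No gaps beyond the explicitly acknowledged black boxes.
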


This shows that the rational homotopy type is computable via only homological methods.  

A fundamentally new and in addition the most computational friendly approach to rational homotopy theory was 
developed by Sullivan \cite{sullivan1977infinitesimal}, using minimal models of commutative differential 
graded algebras (CDGA). 
The key idea is to associate to a simply connected CW-complex, $X$, the CDGA, $A_{PL}\left(X\right)$, of simplicial 
polynomial differential forms on $X$, and using this associate a minimal Sullivan algebra $M_X$ to 
$A_{PL}\left(X\right)$.  This is done in such a way that 
$$
 H^\bullet \left( M_X\right)\cong H^\bullet \left(A_{PL}\left(X\right)\right) \cong  H^\bullet\left(X,\Q\right).
$$
One observes that $M_X$ is uniquely determined by the rational homotopy type of $X$.  From here Sullivan notes that 
$M_X$ is actually a free CDGA $SV_X$ on a graded vector space $V_X$, where
$\left(V_X\right)_k \cong \mathrm{Hom}_{\Z}\left(\pi_k\left(X\right),\Q\right)$, if $X$ is of finite type.  
The benefit of $M_X = SV_X $ is that it is frequently computable, even by hand.  

In case the underlying space is a smooth manifold $M$, the  CDGA $A_{PL}\left(M\right)$ 
is quasi-isomorphic to the de Rham complex $\Dcx{M}$. As a consequence of this, the real homotopy type 
of $M$ is determined by the de Rham complex, which then led to the fundamental observation that the 
cohomology of formal manifolds such as for example K\"ahler manifolds determines the real homotopy type,
see \cite{deligne1975real}.

It is well-known by the work of Herrera and Bloom \cite{herrera,bloom-herrera} (see also Example 
\ref{Ex:BloomHerrera}) that over a semi-analytic set $X\subset \R^n$ the CDGA of smooth differential 
forms does in general not recover the real cohomology of $X$. In this paper, we propose a new idea
to remedy this deficit. Our approach  is to associate to each semi-analytic set $X$ the CDGA of 
Whitney--de Rham 
differential forms.  We proceed to show that this CDGA is quasi-isomorphic to $A_{PL}\left(X\right)$, and 
hence determines the real-homotopy type of $X$.  As an application, we prove, by further elaborating 
on work by Goodwillie \cite{goodwillie}, Burghelea-Fiedorowicz \cite{burghelea-fiedorowicz}, and 
Jones \cite{jones},
that the homology of the free loop space over a semi-analytic set $X$ is naturally isomorphic to the 
Hochschild homology of the CDGA of Whitney--de Rham  forms over $X$.

As an outlook let us mention that if the underlying set $X$ is subanalytic, the argument provided in 
Section \ref{Section:MainResults} to prove Main Theorem 2 together with the result from 
Brasselet--Pflaum \cite{brasselet-pflaum} that the cohomology of the Whitney--de Rham
complex over a subanalytic set coincides with its (real) singular cohomology, shows that 
even for subanalytic $X$ the real homotopy type is determined by the Whitney--de Rham complex. 
The proof from \cite{brasselet-pflaum} that the Whitney--de Rham cohomology of a subanalytic set $X$
recovers its  singular cohomology is only significantly more complicated than the
proof presented here for the case of a semianalytic set $X$. Work on simplifying that proof 
from \cite{brasselet-pflaum} is in progress.

%%% Local Variables:
%%% mode: latex
%%% TeX-master: "ChrPfl"
%%% End:

\section{Rational Homotopy Theory and the Main Results}

\subsection{Real homotopy groups}

Rational homotopy theory can be thought of as homotopy theory localized to the field of fractions.
Let us explain what is meant by this.  Let $X$ be a topological space.  For simplicity assume 
that $X$ is simply connected, i.e.~$\pi_1\left(X\right) = 0$.  Then each homotopy group 
$\pi_k \left(X\right)$ is a $\Z$-module. One can localize the ring $\Z$ at the set of all 
non-zero elements.  This yields the field $\Q$ and the canonical injection $\Map{i_{\Z}}{\Z}{\Q}$.  
By extension of scalars this induces a functor 
$\Map{-\otimes_{\Z}\Q}{\Z\mbox{-mod}}{\Q\mbox{-mod}}$.  The rational homotopy groups are the image 
of the groups $\pi_k\left(X\right)$ under this functor, and are denoted 
  $$\pi_k^{\Q}\left(X\right) = \pi_k \left(X\right) \otimes_{\Z}\Q.$$
In fact one can say more.  If $\K \supset \Q$ is a field extension of $\Q$, then the inclusion of 
$\Q \hookrightarrow \K$ induces a further extension of scalars functor $-\otimes_{\Q}\K$, and the 
composition of these functors is simply $-\otimes_{\Z}\K$, the extension of scalars along the 
inclusion $\Z \subset \K$.

\begin{definition}
For any field extension $\K \supset \Q$, the $\K$-homotopy groups of the simply connected space 
$X$ are defined to be 
	$$\pi_k^{\K}\left(X\right) = \pi_{k}\left(X\right)\otimes_{\Z}\K.$$
\end{definition}

\begin{remark}
This localization has the effect of killing torsion.  That is to say, if 
$\pi_k\left(X\right) \cong F \oplus T$, where $T$ is the torsion subgroup, then 
$\pi_k^{\K}\left(X\right) = F \otimes_{\Z}\K$ is the quotient of $\pi_k\left(X\right)$ by the 
torsion subgroup tensored with $\K$.

It should be noted that in some cases one can work with a space $X$ that is non-simply connected.  
In this case, rational homotopy theory requires some care, but it can be done, 
see \cite{sullivan1977infinitesimal,deligne1975real}.
\end{remark}

Unless stated otherwise, $\mathbb{K}$ will denote in this paper a field of characteristic zero.

\subsection{CDGAs and minimal Sullivan algebras}

The rational (and real) homotopy groups of a connected simply connected topological space 
homotopy equivalent to a CW complex can be computed by an algebraic model of that space. 
The models are objects in the category of commutative differential graded algebras, and two models 
describing the rational homotopy theory of the same topological space are homotopy equivalent. 
Moreover, it has been observed  by Sullivan in the ground-breaking paper 
\cite{sullivan1977infinitesimal} that among the algebraic models of the topological space there is 
one which is minimal in a certain sense, and 
that this minimal model is uniquely determined up to isomorphisms. The machinery of algebraic models 
will be used in the proof  our main results, so we briefly explain its fundamentals in this and the 
following section. We mainly follow the notation and set-up from \cite{felix2001rational}, and 
also refer to that monograph for further details. 

Note that for the following definitions, $\K$ does not need to be a field, but only a 
commutative unital ring. 
\begin{definition}
A \emph{commutative graded algebra} (CGA) over a field $\K$ is a
graded $\K$-vector space $A = \left( A^{n} \right)_{n \in \Z}$ along with a 
graded multiplication 
  $$\Map{\mu}{A^{m}\otimes_{\K} A^{n}}{A^{m+n}},\: a\otimes b \mapsto ab,$$
which is commutative, meaning that if, $a \in A^m $ and $b \in A^n$, then
  $$ ab = \left( -1 \right)^{mn} ba.$$
A CGA $A$ is called a \emph{commutative differential graded algebra} (CDGA), if $A$ is 
endowed with a \emph{differential} $d$, which means that $d$ consists of $\K$-linear maps
  $$\Map{d^n}{A^n}{A^{n+1}}, \: a\mapsto da,$$
for each $n\in \Z$, satisfies $d^2 = 0$, and the following property: For $a \in A^m$ and $b\in A$
	$$d\left(ab\right) = \left(da\right)b + \left( -1 \right)^{m }a \left(db\right).$$
That is to say that $d$ is a degree $+1$ graded $\K$-derivation of $A$ into itself.

A \emph{morphism of CGAs}, $\Map{\phi}{A}{B}$ is a collection of $\K$-linear maps 
$\Map{\phi_i}{A_i}{B_i}$ 
preserving the graded multiplication. If in addition $A$ and $B$ are CDGAs, and  
$\phi_{i+1}d_i = d_i \phi_{i}$ for every $i \in \Z$, the map 
$\Map{\phi}{A}{B}$ is called a \emph{morphism of CDGAs}.

\end{definition}

\begin{definition}
A CDGA $A = \left( A^n \right)_{n \in \Z}$ is called connected, if $A^n = 0$ for all $n < 0$ and 
$A^0 = \K$. Similarly, $A$ is called simply connected, or $1$-connected, if $A$ is connected 
and $A^1 = 0$. The CDGA (or CGA) $A = \left( A^n \right)_{n \in \Z}$ is said to be of 
\emph{finite type}, if $\dim A^n < \infty$ for all $n\in \Z$.
\end{definition}

\begin{remark}
Obviously, CDGAs over $\K$ and their morphisms as defined above form a category 
$\cat{CDGA}_\K$. Likewise, one obtains the category $\cat{CGA}_\K$ of CGAs over $\K$.  
It is clear  that there 
is a forgetful functor from $\cat{CDGA}_\K$ to $\cat{CGA}_\K$ that forgets about the differential $d$.
Moreover, the category $\cat{CGA}_\K$ can be regarded as a full subcategory of $\cat{CDGA}_\K$
by associating to every CGA $A$ the  CDGA having the same underlying graded algebraic structure and 
the differential $d=0$.
Furthermore, one can view the category $\K$-$\cat{ALG}$ of $\K$-algebras as a full subcategory of 
$\cat{CDGA}$ via the functor that associates to a $\K$-algebra $A$ the CDGA $A$ that has 
$A^0 = A$ and $A^k = 0$ for all $k \neq 0$.
\end{remark}

\begin{remark}
  By definition, every CDGA $A$ carries the structure of a cochain complex. 
  Hence it is clear what is meant by saying that two morphisms 
  $f,g : A \rightarrow B$ between CDGAs $A$ and $B$ 
  are \emph{homotopic}, namely if their exists a \emph{chain homotopy}  $h$ from $f$ to $g$, 
  i.e.~a sequence of linear maps $h^n : A^n \rightarrow B^{n-1}$ 
  such that $f^n -g^n = d^{n-1} h^n + h^{n+1} d^n $ for all $n\in \Z$.   
  If $f: A \rightarrow B$ and $g: B \rightarrow A$ are morphisms 
  between CDGAs $A$ and $B$ such that $g \circ f$  is homotopic to $\id_A$ and 
  $f \circ g$ homotopic to $\id_B$, then $A$ and $B$ are said to be \emph{homotopy equivalent}.
  Both $f$ and $g$ are called \emph{homotopy equivalences}, then. 
  If a morphism $f : A \rightarrow B $ induces an isomorphism in cohomology 
  $H^\bullet (f) : H^\bullet (A) \rightarrow H^\bullet (B)$ one says that $f$ is a 
  \emph{quasi-isomorphism}. We write
  $f: A\qism B$ or  $ A\qism B$ to denote quasi-isomorphisms of CDGAs. The CDGAs $A$ and $B$ 
  are called \emph{weakly equivalent}, if there is a sequence of quasi-isomorphisms
  \[
     A \lqism C_1 \qism \ldots \lqism C_k \qism  B
  \]
  with $C_1,\ldots , C_k$ denoting CDGAs.  We sometimes write in symbols 
  $A \lqism \ldots \qism B$ to express that $A$ and $B$ are weakly equivalent.   
  See  \cite[Sec.~12]{felix2001rational} for further details on the homotopy theory of CDGA¡¯s. 
\end{remark}

\begin{example}
Let $X$ be a topological space. Then the singular cohomology $\HS{X}{\K}{\bullet}$ with 
coefficients in $\K$, is a CGA, where the product is given by the cup product.  If the space $X$ 
is connected, resp.~simply connected, then the CGA $\HS{X}{\K}{\bullet}$ is connected, 
resp.~simply connected.
\end{example}

\begin{example}
Let $M$ be a smooth manifold. Then the de Rham complex $\Df{M}{\bullet}$ on $M$, with the wedge 
product and the exterior derivative, is a CDGA over $\R$.  Notice that even if $M$ is connected, 
or simply connected, this CDGA is not necessarily either.  However, since the cohomology of 
$\Df{M}{\bullet}$ is isomorphic to the singular cohomology of $X$ with coefficients in $\R$, 
the cohomology $\Hd{M}{\bullet}$ is a CGA which is isomorphic as a CGA to $\HS{M}{\R}{\bullet}$.  
Hence, the de Rham cohomology is connnected, resp.~simply connected, whenever $M$ is connected, 
resp.~simply connected.
\end{example}

%\begin{example}
%Given a locally closed subset $A$ of $\R^n$ the Whitney-de Rham $\Wdf{X}{\bullet}$ complex is a CDGA over $\R$.  This is because it is a quotient of the de Rham complex on an open neighborhood of $X$ by a differential graded ideal.
%\end{example}

\begin{example}
Let $V=\left(V_k \right)_{k\in\N}$ be a graded $\K$-vector space.  Let
  $$TV = \bigoplus_{n \geq 0} V^{\otimes n}$$
be the tensor algebra, with multiplication defined by concatenation of tensors.  
This is a graded algebra.  Define 
  $$SV = TV / J,$$
Where $J$ is the differential graded ideal of $TV$ generated by all elements of the form 
  $$a\otimes b - \left( -1 \right)^{mn} b \otimes a \text{ where } a \in V_m , b\in V_n, \text{ and }
    m,n.$$
Then $SV$ forms a commutative graded algebra with the multiplication induced from $TV$.
\end{example}

%\begin{remark}
%It should be noted that if $V$ is a differential graded Lie algebra over $\K$, then $S\left(V^* \right)$, where $V^*$ is the dual vector space of $V$, can be endowed with a differential that is induced by the dual of the Lie bracket in $V$, thereby making it a CDGA.  One should consult \cite{sullivan1977infinitesimal}, or \cite{deligne1975real} for more details.
%\end{remark}

\begin{definition}
A \emph{Sullivan CDGA}, or simply a \emph{Sullivan algebra}, is a CDGA of the form $SV$ for some 
graded vector space $V=\left( V_k \right)_{k\in\Z}$ over $\K$ with $V_{k}=0$ for all $k < 0$.  
Furthermore, it is required that $V$ has a basis $\left( v_\alpha \right)_{\alpha \in J}$, where $J$ 
is a partially ordered set such that for any $\beta \in J$
	$$ d v_{\beta} \in S\left(V_{< \beta}\right).$$
Here, $V_{< \beta}$ is the graded vector space spanned by $\left( v_\alpha \right)_{\alpha < \beta }$.

A Sullivan algebra is called \emph{minimal}, if the following condition is satisfied:
	$$  d\left( V \right) \subset \left(S^+ V\right)\left( S^+ V\right),$$
where $S^+ V$ consists of all the elements with nonzero tensor degree in $SV$.
\end{definition}

The minimality condition is simply requiring that the image of $d$ be in the 
image of the multiplication map when it is restricted to elements of strictly 
positive degree.

\begin{definition}
Given a CDGA, $A$, a \emph{minimal model for} $A$ is a minimal Sullivan algebra $SV$ 
along with a morphism of CDGAs, $\Map{m_A}{SV}{A}$, that induces an 
isomorphism on cohomology groups.
\end{definition}

With the above definitions it is now possible to explain why minimal Sullivan 
algebras are so important. 

\begin{theorem}[{\cite[Sec.~5]{sullivan1977infinitesimal} \& \cite[\S 12]{felix2001rational}}]
\label{minmodel}
To any connected, simply-connected CDGA, $A$, there exists a minimal model, $SV_A$, 
that is unique in the following sense:  If $B$ is a connected, simply-connected CDGA 
that is homotopy equivalent to $A$ then the minimal model $SV_B$ associated to $B$ is 
isomorphic to $SV_A$, via an isomorphism unique up to homotopy.
\end{theorem}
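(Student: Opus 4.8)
The plan is to establish the two assertions of the theorem—existence of a minimal model and its uniqueness up to homotopy—separately, obtaining uniqueness as a consequence of two lifting properties that minimal Sullivan algebras enjoy: a \emph{lifting lemma} and the \emph{rigidity} that a quasi-isomorphism between minimal Sullivan algebras is already an isomorphism. Throughout I reduce ``homotopy equivalent'' to ``quasi-isomorphic'': since a chain homotopy between two CDGA morphisms forces them to agree on cohomology, a homotopy equivalence $\Map{f}{A}{B}$ with homotopy inverse $g$ satisfies $H^\bullet(g)\circ H^\bullet(f) = H^\bullet(\id_A)$ and symmetrically, so $f$ is in particular a quasi-isomorphism.

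For existence I would construct $SV_A$ by induction on degree, producing an increasing chain of minimal Sullivan algebras $SV(\le n)$ together with CDGA morphisms $\Map{m_n}{SV(\le n)}{A}$ such that $H^k(m_n)$ is an isomorphism for $k\le n$ and injective for $k=n+1$. As $A$ is connected and simply connected, $H^0(A)=\K$ and $H^1(A)=0$, so the induction may start with $V^1=0$. In the step from $n$ to $n+1$ one adjoins in degree $n+1$ two families of generators: first a family of \emph{closed} generators whose images under $m_{n+1}$ represent a basis of $\operatorname{coker} H^{n+1}(m_n)$, repairing surjectivity in degree $n+1$; then a family of generators $v_\alpha$ with $dv_\alpha=z_\alpha$, where the cocycles $z_\alpha$ of the partially built algebra represent a basis of $\ker H^{n+2}(m_n)$ and $m_{n+1}(v_\alpha)$ is chosen to be a primitive in $A$ of $m_n(z_\alpha)$, repairing injectivity in degree $n+2$. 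The colimit over $n$ yields $\Map{m_A}{SV_A}{A}$ inducing an isomorphism on all cohomology. Minimality, i.e. $d(V)\subset (S^+V)(S^+V)$, is automatic in the simply connected case: the algebra built before this step carries no generator of degree $n+2$, and since there are no generators in degree $1$ every element of degree $n+2$ is a product of at least two factors of degree $\ge 2$, hence decomposable; thus each $z_\alpha$ is decomposable and the new differentials respect minimality.

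For uniqueness the essential tool is the lifting lemma: if $SV$ is a minimal Sullivan algebra, $\Map{\phi}{B}{C}$ a quasi-isomorphism of CDGAs, and $\Map{\psi}{SV}{C}$ any morphism, then there is a morphism $\Map{\tilde\psi}{SV}{B}$ with $\phi\circ\tilde\psi$ homotopic to $\psi$, unique up to homotopy. I would prove this by the same degree-by-degree induction over the generators of $SV$, extending $\tilde\psi$ across each new generator using that $SV$ is freely generated with lower-triangular differential and that the mapping cone of $\phi$ is acyclic because $\phi$ is a quasi-isomorphism. Granting it, suppose $B$ is homotopy equivalent, hence quasi-isomorphic via some $\Map{f}{A}{B}$, to $A$. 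Lifting $f\circ m_A : SV_A \to B$ through the quasi-isomorphism $\Map{m_B}{SV_B}{B}$ produces $\Map{g}{SV_A}{SV_B}$ with $m_B\circ g$ homotopic to $f\circ m_A$; comparing induced maps on cohomology shows that $g$ is itself a quasi-isomorphism.

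It remains to invoke rigidity: every quasi-isomorphism $g$ between minimal Sullivan algebras is an isomorphism. The mechanism is that minimality makes the linear part of the differential vanish, so the induced differential on the indecomposables $S^+V/(S^+V)(S^+V)\cong V$ is zero, and a comparison of these linear complexes shows that a cohomology isomorphism forces the linear part $\Map{Q(g)}{V_A}{V_B}$ to be an isomorphism. Filtering both free algebras by word length then identifies the associated graded of $g$ with the symmetric algebra on $Q(g)$, which is an isomorphism, so $g$ is an isomorphism of CDGAs; applying this to the $g$ above gives $SV_A\cong SV_B$, and the uniqueness clause of the lifting lemma makes the isomorphism canonical up to homotopy. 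I expect this rigidity step—precisely the passage ``$H^\bullet(g)$ an isomorphism $\Rightarrow$ $Q(g)$ an isomorphism,'' where minimality is indispensable—to be the main obstacle, alongside the bookkeeping that keeps the existence construction minimal; the homotopy invariance of the entire package ultimately rests on it.
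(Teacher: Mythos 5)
The paper does not prove this theorem; it is quoted as background and attributed to Sullivan and to \cite[\S 12]{felix2001rational}, and your outline is exactly the standard argument from those sources (degree-by-degree construction of the minimal model using simple connectivity, the lifting lemma through quasi-isomorphisms, and the rigidity that a quasi-isomorphism of simply connected minimal Sullivan algebras is an isomorphism). Your sketch is sound and follows essentially the same route the cited references take, with the one genuinely hard step (cohomology isomorphism forces the linear part $Q(g)$ to be an isomorphism) correctly identified but, as you acknowledge, not carried out in detail.
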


\subsection{Piecewise Linear Differential Forms}

In this section we describe a canonical way of associating to a topological space $X$ a CDGA 
$A_{PL}\left(X \right)$, called the CDGA of piecewise linear differential forms on $X$.  One should 
consult \cite[\S 10]{felix2001rational} for more details.

Before defining the CDGA $A_{PL}\left(X \right)$ there are several prerequisites that need to be 
discussed. Namely, one first needs to define a simplicial CDGA $A_{PL}$ which should be thought 
of as the algebra of polynomial differential forms on the standard simplices $|\Delta_{\bullet}|$ 
as subsets of $\R^{\bullet + 1}$.  Then the algebra $A_{PL}\left( X \right)$ is the collection of 
simplicial morphisms from the set of singular simplices in $X$ to $A_{PL}$.  This has the effect 
of assigning to each simplex in $X$ a polynomial differential form that is compatible with the 
face and degeneracy maps.

\begin{definition}
  The category of \emph{finite ordinal numbers} $\Delta$ is defined to have objects being ordered 
  sets $\Delta_n = \left\{0<1<2<\cdots<n\right\}$ for each $n\in\N$.  Morphisms in this category 
  are order preserving functions.
\end{definition}

\begin{definition}
  Let $\calC$ be any category.  A simplicial object in $C$ is a contravariant functor 
  $$\Map{K}{\Delta}{\calC}.$$
\end{definition}

The following proposition links the definition of simplicial objects, as defined above, with the 
more common idea of a simplicial object, namely a collection of specified objects endowed with 
face and degeneracy maps between these objects.

\begin{proposition}[{\cite[Prop.~8.1.3]{weibel1995introduction}}]
  To define a simplicial object in $\mathcal{C}$ it is sufficient to specify, for each $n\in \N$, an 
  object $K_n$ and morphisms $\Map{\partial_i}{K_{n+1}}{K_{n}}$ for $0\leq i \leq n+1$ and 
  $\Map{s_j}{K_{n}}{K_{n+1}}$ for $0\leq j \leq n$. These maps are required to satisfy the following 
  simplicial relations
  \[ 
  \begin{array}{ll}
     \partial_i \partial_j =  \partial_{j-1} \partial_i, & \text{for $i < j$}, \\
     s_i s_j =  s_{j+1} s_i , &  \text{for $i \leq j$}, \\
     \partial_i s_j   =  s_{j-1}\partial_i , & \text{for $j < i$}, \\
     \partial_i s_j   =  \id , & \text{for $i = j,j+1 $, and} \\
     \partial_i s_j   =  s_j \partial_{i-1} , & \text{for $i > j+1$} .	
  \end{array}
  \]
The maps $\partial_i$ are called face maps, and the maps $s_j$ are called degeneracy maps.
\end{proposition}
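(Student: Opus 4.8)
The plan is to exploit the combinatorial structure of the simplex category $\Delta$. Recall that every morphism of $\Delta$ is built from two families of elementary maps: the \emph{coface} maps $\delta^i \colon \Delta_n \to \Delta_{n+1}$, the unique injective order-preserving map whose image omits $i$ (for $0 \le i \le n+1$), and the \emph{codegeneracy} maps $\sigma^j \colon \Delta_{n+1} \to \Delta_n$, the unique surjective order-preserving map taking the value $j$ twice (for $0 \le j \le n$). These satisfy the \emph{cosimplicial identities}, which are exactly the relations displayed in the statement with the roles of source and target interchanged, e.g.\ $\delta^j\delta^i = \delta^i\delta^{j-1}$ for $i<j$ and $\sigma^j\sigma^i = \sigma^i\sigma^{j+1}$ for $i \le j$. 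The decisive structural fact I would invoke is that every order-preserving map $\alpha \colon \Delta_m \to \Delta_n$ admits a \emph{unique} factorization $\alpha = \delta^{i_1}\cdots\delta^{i_s}\,\sigma^{j_1}\cdots\sigma^{j_t}$ as a surjection followed by an injection, with $i_1 > \cdots > i_s$ and $j_1 < \cdots < j_t$; this is the standard epi--mono factorization in $\Delta$.

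The easy direction is immediate. Given a contravariant functor $\Map{K}{\Delta}{\calC}$, I would set $K_n = K(\Delta_n)$, $\partial_i = K(\delta^i)$ and $s_j = K(\sigma^j)$. Since $K$ is a functor it carries each cosimplicial identity to its image, and since $K$ is contravariant this reverses the order of composition, producing precisely the simplicial relations of the statement. Thus the data $(K_n,\partial_i,s_j)$ automatically satisfy the required relations.

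For the converse, which is the content of the proposition, I would start from data $(K_n,\partial_i,s_j)$ satisfying the simplicial identities and reconstruct the functor. On objects put $K(\Delta_n) = K_n$. On a morphism $\alpha$, take its unique epi--mono factorization as above and \emph{define} $K(\alpha)$ to be the corresponding composite of the $\partial$'s and $s$'s, assembled in the order dictated by contravariance. Because the factorization is unique there is no choice to make, so $K(\alpha)$ is well defined; it then remains to check that $K$ preserves identities and composition. Preservation of identities is clear, and the crux is composition: given $\alpha$ and $\beta$ in normal form, one must show that the product of the two associated composites, once rewritten into normal form, agrees with the composite attached to the normal form of $\beta\circ\alpha$.

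The main obstacle is exactly this last verification. The product of two normal-form words is in general not itself in normal form: faces and degeneracies become interleaved and must be commuted past one another using $\partial_i s_j = s_{j-1}\partial_i$, $\partial_i s_j = \id$, and $\partial_i s_j = s_j \partial_{i-1}$, while faces are reordered by $\partial_i\partial_j = \partial_{j-1}\partial_i$ and degeneracies by $s_i s_j = s_{j+1} s_i$. I would argue by a finite rewriting procedure that any word in the $\partial$'s and $s$'s can be brought to a unique normal form, and that this reduction is confluent, so that the value $K(\beta\circ\alpha)$ read off from the factorization of $\beta\circ\alpha$ coincides with the reduced product $K(\beta)K(\alpha)$. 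Matching this purely combinatorial normal-form reduction in $\Delta$ against the algebraic reduction forced by the five relations is the only nontrivial point; everything else is formal.
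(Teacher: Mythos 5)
The paper offers no proof of this statement at all---it is quoted verbatim from Weibel \cite[Prop.~8.1.3]{weibel1995introduction}---so there is nothing internal to compare against; your argument is the standard one from that reference (and from Mac Lane), resting on the unique epi--mono normal form for morphisms of $\Delta$ and the fact that the five simplicial identities are exactly the contravariant images of the cosimplicial relations needed to rewrite products of normal forms. Your sketch is correct, and the one point you flag as the crux---that the rewriting to normal form is well defined and compatible with composition---is precisely the combinatorial lemma carried out in the cited sources, so nothing essential is missing.
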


\begin{example}
Let $X$ be a topological space.  Consider for each $n \in \N$ the set of singular simplices in $X$, 
i.e.~the set
$$
  S_n \left( X \right) = \big\{ \Map{\sigma}{|\Delta_n|}{X}\mid\sigma\text{ is continuous }\big\},
$$
where $|\Delta_n|$ is the standard simplex in $\R^{n+1}$. The face and degeneracy maps are defined by 
including $|\Delta_n|$ into $|\Delta_{n+1}|$ as the $i$-th face, and by collapsing the $j$-th face of 
$|\Delta_{n+1}|$ respectively. These operations induce maps on $S_n \left( X \right)$ by precomposing 
$\sigma$ with the respective inclusion or quotient, thus defining a simplicial set.
\end{example}

The algebra $A_{PL}$ will be defined to be a simplicial CDGA.  This means that one must specify a 
CDGA for each $n \in \N$, along with the face and degeneracy maps which satisfy the above relations.

\begin{definition}\label{A_{PL,n}}
Fix a natural number $n$.  Define the graded $\R$-vector space $V = \left( V_j \right)_{j\geq0}$ with 
$V_0$ having basis $\left(t_i \right)_{i=0}^{n}$, $V_{1}$ having basis $\left( y_i \right)_{i=0}^{n}$, 
and $V_j= 0$ for $j\geq2$.  Let 
  $$A_{PL,n} = SV / I,$$	
where $I$ is the differential graded ideal generated by the elements $t_0 + t_1 + \cdots + t_n - 1$ 
and $y_0 + y_1 + \cdots + y_n$.  Define the differential $d$ by requiring that
$$ d \left( t_i \right) = y_i \mbox{, and } d \left( y_i \right) = 0. $$
\end{definition}

The above construction gives a CDGA for each $n \in \N$. To define a simplicial CDGA, one must now 
specify face and degeneracy maps. The maps $\partial_i$ and $s_j$ are defined in terms of the bases 
$(t_k)$ and $(y_k)$.  

\begin{definition}
Define the CDGA morphisms $\Map{\partial_i}{A_{PL,n+1}}{A_{PL,n}}$ and $\Map{s_j}{A_{PL,n}}{A_{PL,n+1}}$ 
for $0\leq i \leq n+1$, and $0 \leq j \leq n$, by defining them on generators as follows
$$ 
  \partial_i t_k = 
  \begin{cases}
    t_k ,& \text{ for $k<i$,} \\ 
    0, & \text{ for $k=i$,} \\ 
    t_{k-1}, & \text{ for $k>i$,}
  \end{cases}  
  \quad \text{ and }  \quad 
   s_j t_k = 
  \begin{cases}
    t_k, & \text{ for $k < j$,} \\ 
    t_k + t_{k+1}, & \text{ for $k=j$,} \\ 
    t_{k+1}, & \text{ for $k > j$.} 
  \end{cases}
$$
The simplicial CDGA $A_{PL}$ is defined to have components $A_{PL,n} = SV/I$ as in 
Def.~\ref{A_{PL,n}}, the face maps are the $\partial_i$'s, and the degeneracy maps are the $s_j$'s 
\end{definition}

The maps $\partial_i$ and $s_j$ are well defined because the algebra $A_{PL,n}$ is the quotient of the 
free algebra $SV$ by the ideal $I$, and $\partial_i$, $s_j$ preserve this ideal.  Furthermore one 
checks immediately that these maps satisfy the simplicial identities.  Therefore the above 
construction defines a simplicial CDGA $A_{PL}$, indeed.

\begin{definition}
Let $X$ be a topological space. Then the CDGA of \emph{polynomial differential forms} on $X$ is 
defined to be the set of all simplicial homomorphism between $S_{\bullet}\left(X\right)$ and 
$A_{PL,\bullet}$.  That is to say it is the set of morphisms in the category of simplicial sets
  $$A_{PL}\left(X\right) = \hom_{\Delta_{\bullet}}\left( S_{\bullet}\left(X\right) , A_{PL,\bullet} \right).$$
\end{definition}

\begin{theorem}[{\cite[\S 10 -- \S 12]{felix2001rational}}]
The assignment $X\mapsto A_{PL}\left( X \right)$ defines a contravariant functor from the 
category $\cat{TOP}_{CW}$ 
of topological spaces that are homotopy equivalent to a CW-complex to the category $\cat{CDGA}_{\R}$ 
of commutative differential graded algebras over $\R$.  Furthermore, this functor is invariant on 
homotopy classes of topological spaces in the following sense:  If $X$ is homotopy equivalent to 
$Y$ then the CDGAs $A_{PL}\left(X\right)$ and $A_{PL}\left( Y \right)$ are homotopy equivalent, and 
hence have isomorphic minimal Sullivan algebras.
\end{theorem}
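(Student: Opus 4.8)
The plan is to factor the construction $X \mapsto A_{PL}(X)$ through the singular simplicial set functor and treat functoriality and homotopy invariance separately, the latter being the substantial part.

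\emph{Functoriality.} First I would recall that the singular simplicial set functor $S_\bullet : \cat{TOP} \to \cat{sSet}$ is covariant, a continuous map $f : X \to Y$ acting by post-composition $\sigma \mapsto f \circ \sigma$. Since $A_{PL,\bullet}$ is a simplicial CDGA, the hom-set $A_{PL}(K) = \hom_{\Delta_\bullet}(K, A_{PL,\bullet})$ inherits a CDGA structure by performing all operations levelwise: for $\varphi,\psi \in A_{PL}(K)$ and $\sigma \in K_n$ one sets $(\varphi\psi)(\sigma) = \varphi(\sigma)\psi(\sigma)$ and $(d\varphi)(\sigma) = d(\varphi(\sigma))$, and the CDGA axioms for $A_{PL}(K)$ follow pointwise from those of each $A_{PL,n}$. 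A map of simplicial sets $g : K \to L$ induces, by precomposition $\varphi \mapsto \varphi \circ g$, a morphism of CDGAs $A_{PL}(g) : A_{PL}(L) \to A_{PL}(K)$; this manifestly respects the levelwise product and differential and reverses composition. Composing $A_{PL}(-)$ with $S_\bullet$ then yields the desired contravariant functor $\cat{TOP} \to \cat{CDGA}_\R$, which I restrict to $\cat{TOP}_{CW}$.

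\emph{Reduction of homotopy invariance.} Suppose $X \simeq Y$, say via $f : X \to Y$ and $g : Y \to X$ with $g\circ f \simeq \id_X$ and $f \circ g \simeq \id_Y$. It suffices to prove that homotopic continuous maps induce \emph{chain-homotopic} CDGA morphisms, for then $A_{PL}(f)$ and $A_{PL}(g)$ are mutually inverse up to homotopy and $A_{PL}(X) \simeq A_{PL}(Y)$ as CDGAs. A continuous homotopy $H : X\times[0,1] \to Y$ yields a simplicial homotopy $\eta : S_\bullet(X)\times\Delta[1] \to S_\bullet(Y)$ by combining the canonical identification $S_\bullet(X\times[0,1]) = S_\bullet(X)\times S_\bullet([0,1])$, the map $\Delta[1] \to S_\bullet([0,1])$ classifying the tautological edge, and $S_\bullet(H)$. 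Writing $\lambda_0, \lambda_1 : S_\bullet(X) \to S_\bullet(X)\times\Delta[1]$ for the two endpoint inclusions, one has $\eta\circ\lambda_0 = S_\bullet(f)$ and $\eta\circ\lambda_1 = S_\bullet(g)$. By contravariant functoriality it therefore suffices to produce a chain homotopy between the two restrictions $A_{PL}(\lambda_0), A_{PL}(\lambda_1) : A_{PL}(S_\bullet(X)\times\Delta[1]) \to A_{PL}(S_\bullet(X))$ and precompose it with $A_{PL}(\eta)$, since precomposing a chain homotopy with a fixed chain map is again a chain homotopy.

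\emph{The homotopy operator.} The heart of the argument is the natural isomorphism $A_{PL}(K\times\Delta[1]) \cong A_{PL}(K)\otimes\Lambda(t,dt)$, where $\Lambda(t,dt) = A_{PL}(\Delta[1]) = A_{PL,1}$ is the free CDGA on a degree-$0$ generator $t$, i.e.\ the algebra of polynomial differential forms on the interval. Granting this, the two endpoint maps become the evaluations $\varepsilon_0, \varepsilon_1$ that set $t = 0,1$ and $dt = 0$, and the fiber integration $\int_0^1 : A_{PL}(K)\otimes\Lambda(t,dt) \to A_{PL}(K)$, defined by $\alpha\, t^i \mapsto 0$ and $\alpha\, t^j\, dt \mapsto \tfrac{1}{j+1}\alpha$, supplies a degree $-1$ operator satisfying the Stokes-type identity $\varepsilon_1 - \varepsilon_0 = d\circ\int_0^1 + \int_0^1\circ\, d$. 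This is exactly a chain homotopy between $A_{PL}(\lambda_0)$ and $A_{PL}(\lambda_1)$. Composing with $A_{PL}(\eta)$ gives the required homotopy $A_{PL}(S_\bullet(f)) \simeq A_{PL}(S_\bullet(g))$, whence $A_{PL}(X)$ and $A_{PL}(Y)$ are homotopy equivalent CDGAs; by Theorem \ref{minmodel} they therefore have isomorphic minimal Sullivan algebras.

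\emph{Main obstacle.} I expect the decisive difficulty to lie in the structural isomorphism $A_{PL}(K\times\Delta[1]) \cong A_{PL}(K)\otimes\Lambda(t,dt)$, which rests on the \emph{extendability} (a Kan-type lifting property) of the simplicial CDGA $A_{PL}$: this is what guarantees that a compatible family of polynomial forms on the prisms $|\Delta_n|\times[0,1]$ decomposes uniquely into a $t$-polynomial part and a $t^j\, dt$ part with coefficients pulled back from $K$. Verifying extendability and then the Stokes identity for $\int_0^1$ are the only genuinely non-formal steps; everything else is bookkeeping with the levelwise CDGA structure.
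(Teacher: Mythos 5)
The paper itself offers no proof of this statement---it is quoted from \cite[\S 10--\S 12]{felix2001rational}---so I am judging your sketch on its own terms. The functoriality half is fine: the levelwise CDGA structure on $\hom_{\Delta_\bullet}(K,A_{PL,\bullet})$ and contravariance under precomposition with $S_\bullet(f)$ is exactly the standard construction. The reduction of homotopy invariance to a chain homotopy between the two endpoint restrictions $A_{PL}(\lambda_0),A_{PL}(\lambda_1)$ is also a legitimate plan.

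The gap is the ``structural isomorphism'' $A_{PL}(K\times\Delta[1])\cong A_{PL}(K)\otimes\Lambda(t,dt)$ on which your homotopy operator rests: it is false as an isomorphism, already for $K=\Delta[1]$. The simplicial set $\Delta[1]\times\Delta[1]$ is the union of two nondegenerate $2$-simplices glued along the diagonal edge, and an element of $A_{PL}(\Delta[1]\times\Delta[1])^0$ is merely a pair of polynomials, one on each triangle, agreeing on that edge --- e.g.\ the function equal to $0$ on $\{t\le s\}$ and to $t-s$ on $\{t\ge s\}$. This is not a global polynomial in $s,t$, so it does not lie in the image of the natural map $\mu:A_{PL}(K)\otimes\Lambda(t,dt)\to A_{PL}(K\times\Delta[1])$, $\varphi\otimes\omega\mapsto \mathrm{pr}_1^*\varphi\cdot \mathrm{pr}_2^*\omega$. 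Extendability of $A_{PL}$ (surjectivity of restrictions to sub-simplicial-sets) does not rescue this; it is used for a different purpose. Two standard repairs: (a) show only that $\mu$ is a \emph{quasi-isomorphism}; your Stokes identity $\varepsilon_1-\varepsilon_0=d\int_0^1+\int_0^1 d$ then yields that $A_{PL}(\lambda_0)$ and $A_{PL}(\lambda_1)$ agree on cohomology, hence homotopic maps of spaces induce the same map on $H(A_{PL}(-))$, so a homotopy equivalence $X\simeq Y$ makes $A_{PL}(f)$ a quasi-isomorphism; over a field this is automatically a chain homotopy equivalence of the underlying complexes, and Theorem \ref{minmodel} gives the isomorphism of minimal models. (b) Follow the cited source: compare $A_{PL}(K)$ with singular cochains $C^\bullet(K;\R)$ through a zig-zag of quasi-isomorphisms of simplicial cochain algebras (this is where extendability genuinely enters), conclude $H(A_{PL}(K))\cong H^\bullet(|K|;\R)$ naturally, and deduce that $A_{PL}$ carries weak homotopy equivalences to quasi-isomorphisms. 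Either way, the conclusion you want survives, but not by the decomposition you proposed.
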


In case the space under consideration is a smooth manifold, one can even say more. 

\begin{theorem}[{\cite[Thm.~11.4]{felix2001rational}}]
\label{WeakEquiMfd}
  Let $M$ be a smooth manifold. Then the de Rham complex $\Df{M}{\bullet}$ on $M$
  is weakly equivalent to  the CDGA $A_{PL}\left( M \right)$.
\end{theorem}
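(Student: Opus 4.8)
The plan is to bridge the two CDGAs through an intermediate \emph{simplicial de Rham complex}. First I would introduce the simplicial CDGA $A_{\mathrm{DR}}$ whose component $A_{\mathrm{DR},n} = \Df{|\Delta_n|}{\bullet}$ is the algebra of smooth differential forms on the standard $n$-simplex, with face and degeneracy maps given by pullback along the cosimplicial structure maps of $|\Delta_\bullet|$. Mimicking the definition of $A_{PL}(X)$, I would set $A_{\mathrm{DR}}(X) = \hom_{\Delta_\bullet}\!\left(S_\bullet(X), A_{\mathrm{DR},\bullet}\right)$ for a space $X$. The inclusion of polynomial into smooth forms is a morphism of simplicial CDGAs $A_{PL} \to A_{\mathrm{DR}}$, and the weak equivalence would be assembled from two independent quasi-isomorphisms, namely $A_{PL}(M) \qism A_{\mathrm{DR}}(M)$ and $A_{\mathrm{DR}}(M) \lqism \Df{M}{\bullet}$.

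For the first step, the crucial observation is that each simplex $|\Delta_n|$ is contractible, so by the (polynomial and smooth) Poincaré lemmas both $A_{PL,n}$ and $A_{\mathrm{DR},n}$ have cohomology $\K$ concentrated in degree zero; hence the inclusion $A_{PL,n}\hookrightarrow A_{\mathrm{DR},n}$ is a quasi-isomorphism for every $n$. Both simplicial cochain algebras satisfy the relevant extension (Kan-type) condition, so I would invoke the general principle, as in \cite[\S 10]{felix2001rational}, that a morphism of extendable simplicial cochain algebras which is a quasi-isomorphism on each simplex induces a quasi-isomorphism on global sections $A_{PL}(X) \to A_{\mathrm{DR}}(X)$ for every simplicial set, in particular for $X = S_\bullet(M)$. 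This is the simplicial analog of comparing two cohomology theories that already agree on points.

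The comparison $A_{\mathrm{DR}}(M)\lqism \Df{M}{\bullet}$ is the geometric heart of the argument. I would use the pullback (integration) map sending a global form $\omega \in \Df{M}{\bullet}$ to the simplicial form $\sigma \mapsto \sigma^*\omega$, which is defined only on smooth singular simplices; this forces one first to replace the continuous singular simplices $S_\bullet(M)$ by smooth singular simplices and to check, by a smoothing argument, that the induced restriction is a quasi-isomorphism. To prove the resulting map is a quasi-isomorphism I would argue by Mayer--Vietoris induction over a good open cover of $M$: on a convex (hence contractible) chart both $A_{\mathrm{DR}}$ and $\Df{-}{\bullet}$ compute $\K$ in degree zero, the map is visibly a quasi-isomorphism there, and compatibility of both functors with the Mayer--Vietoris sequences allows one to glue.

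The main obstacle I anticipate is precisely this last step: managing the passage between smooth and continuous singular simplices, and setting up the Mayer--Vietoris/good-cover induction carefully enough that the multiplicative (CDGA-level) structure, and not merely the additive cohomology, is respected throughout. The degreewise Poincaré lemma and the extension property are essentially formal once stated, but verifying that the integration map is compatible with wedge products and that the inductive gluing goes through for a not-necessarily-finite good cover (via a suitable colimit and naturality argument) is where the bulk of the technical work lies.
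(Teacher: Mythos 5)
Your outline is correct and is essentially the argument behind the result the paper imports by citation from F\'elix--Halperin--Thomas (Thm.~11.4): the zig-zag through the simplicial CDGA of smooth forms on simplices, the levelwise Poincar\'e lemma plus extendability to globalize, and the integration map on smooth singular simplices compared via a good-cover/Mayer--Vietoris induction. The paper gives no independent proof, so there is nothing further to compare.
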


\begin{example}[Bloom--Herrera \cite{bloom-herrera}] 
\label{Ex:BloomHerrera}
Let $X\subset U$ be a closed semi-analytic subset of 
some open $U\subset \R^n$. Following \cite{bloom-herrera}, the sheaf $\Omega^\bullet_{\textup{dR},X}$ of 
(germs of) smooth differential forms on $X$ is defined as the quotient sheaf 
$\Omega^\bullet_{\textup{dR},X} := \big( \Omega^\bullet_{\textup{dR},U} / \calN^\bullet_{U,X}\big)_{|X}$, where 
$\Omega^\bullet_{\textup{dR},U}$ is the sheaf of smooth differential forms on $U$, and 
$\calN^\bullet_{U,X} \subset  \Omega^\bullet_{\textup{dR},U}$ the subsheaf of 
germs of smooth forms $\alpha$ such that the following holds true:
\begin{itemize}
\item for any smooth manifold $N$ and any smooth map $g: N \rightarrow U$ with $g(N)\subset X$, 
      one has $g^*(\alpha)=0$.
\end{itemize}
It follows by construction, that the exterior differential and the wedge product on 
$\Omega^\bullet_{\textup{dR},U}$ both factor to $\Omega^\bullet_{\textup{dR},X}$, hence the 
space of global sections $\Omega^\bullet_\textup{dR} (X)$ becomes a CDGA with the property 
$\Omega^0_\textup{dR}(X) = \calC^\infty (X)$. 
The question arises whether this CDGA  determines the (singular) cohomology of $X$ or 
is even weakly equivalent to the CDGA $A_{PL}\left( X \right)$. In fact, neither is the case as the following 
example by Bloom--Herrera \cite{bloom-herrera} shows. 

Consider the real analytic function $f: \R  \rightarrow \R^2$, $y \mapsto (y^5,y^6 +y^7)$, and let 
$U\subset \R$ be a connected neighborhood of $0$. After possibly shrinking $U$, the image $X := f(U)$ is an 
analytic space, irreducible and contractible. Since $X$ is one-dimensional 
$d\big(\Omega^1_\textup{dR} (X)\big)=0$. However, as Bloom--Herrera \cite{bloom-herrera} have proved,
$\Omega^0_\textup{dR} (X) \overset{d}{\longrightarrow} \Omega^1_\textup{dR} (X)$ is not surjective,
hence the Poincar\'e Lemma does not hold in this case. 
This entails in particular that Theorem \ref{WeakEquiMfd} is in general not true for the CDGA of smooth 
differential  forms on a semi-analytic set. 
\end{example}

\begin{definition}
The \emph{minimal model for $X$} is the minimal Sullivan algebra $SV_X$ of $A_{PL}\left(X\right)$ 
along with the CDGA morphism $\Map{\phi_X}{SV_X}{A_{PL}\left( X \right)}$.  
\end{definition}

\begin{remark}
If two spaces $X$ and $Y$ are homotopy equivalent then their minimal models $SV_X$ and $SV_Y$ 
coincide up to a unique isomorphism.
\end{remark}

The connection between minimal models and real homotopy theory shows up via 
the graded 
$\R$-vector space $V_X$ on which the minimal model of $X$ is defined.

\begin{theorem}[{\cite[Thm.~8.1 (iii)]{sullivan1977infinitesimal} \& \cite[Thm.~15.11]{felix2001rational}}]
\label{rhgroups}
The real homotopy groups of a connected simply connected space $X$ whose real singular homology 
$H_\bullet (X,\R)$ is of finite type are determined by $V_X$.  
In fact
	$$V_{X,\bullet} \cong \hom_{\R} \left( \pi_{\bullet} \left( X\right) , \R \right).$$ 
\end{theorem}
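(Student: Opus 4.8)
The plan is to prove the natural statement underlying the claim: for the minimal model $\Map{m_X}{SV_X}{A_{PL}(X)}$, the generating space $V_X$ is, degreewise, the $\R$-linear dual of the real homotopy. Since $X$ is simply connected and $H_\bullet(X,\R)$ is of finite type, the minimal model $SV_X$ is itself of finite type and concentrated in degrees $\geq 2$, and the real homotopy groups $\pi_k(X)\otimes_\Z\R$ are finite-dimensional; here $\hom_\R(\pi_k(X),\R)$ is to be read as $\hom_\Z(\pi_k(X),\R)\cong \left(\pi_k(X)\otimes_\Z\R\right)^{*}$. I would therefore aim to establish the dual isomorphism
$$\hom_\R(V_{X,k},\R)\cong \pi_k(X)\otimes_\Z\R,$$
from which the assertion follows by dualizing once more, using finite-dimensionality.

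The main line of argument is to match the inductive Postnikov construction of $X$ with the inductive construction of $SV_X$. First I would treat the base case of Eilenberg--MacLane spaces: for a finitely generated abelian group $\pi$ and $n\geq 2$, a direct computation of $H^\bullet(K(\pi,n),\R)$ (polynomial for $n$ even, exterior for $n$ odd, with all torsion of $\pi$ dying) shows that the minimal model of $A_{PL}(K(\pi,n))$ is the free CDGA $S(V)$ with $V$ concentrated in degree $n$, $d=0$, and $V\cong \hom_\R(\pi,\R)$. For the inductive step I would invoke the principal Postnikov fibrations $K(\pi_n,n)\to X[n]\to X[n-1]$ together with the fact that $A_{PL}$ converts such fibrations into relative (Koszul--Sullivan) extensions
$$SV_{X[n-1]}\hookrightarrow SV_{X[n-1]}\otimes S(W)\longrightarrow S(W),$$
whose fibre model $S(W)$ is the minimal model of $K(\pi_n,n)$, so that $W\cong \hom_\R(\pi_n(X),\R)$ sits in degree $n$. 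Minimality forces $d(W)\subset (S^+V)(S^+V)$, so the new generators $W$ are exactly the degree-$n$ indecomposables and realize the next layer of the dual homotopy. Passing to the limit over the Postnikov tower — harmless because finite type makes the tower converge in each fixed degree — assembles these identifications into $V_{X,k}\cong \hom_\R(\pi_k(X),\R)$ for every $k$.

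The hard part is the comparison underlying the inductive step, namely the \emph{model of a fibration} theorem: that applying $A_{PL}$ to a principal Postnikov fibration yields a relative minimal Sullivan extension whose fibre model is precisely the minimal model of the corresponding Eilenberg--MacLane space, with the differential on the new generators encoding the realified $k$-invariant. Proving this cleanly requires Sullivan's spatial realization functor and its adjunction with $A_{PL}$, equivalently the \emph{dual Hurewicz theorem} identifying the generators $V$ of a minimal model with the dual homotopy of its realization. Granting that machinery, one checks that since $d(W)$ is purely decomposable, the differential on the new generators does not perturb the identification of the linear piece $W$ with $\hom_\R(\pi_n(X),\R)$. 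This technical core is exactly the content of the cited results of Sullivan and F\'elix--Halperin--Thomas, whose treatment I would follow.
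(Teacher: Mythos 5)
The paper gives no proof of this statement at all --- it is imported verbatim from Sullivan \cite{sullivan1977infinitesimal} and F\'elix--Halperin--Thomas \cite{felix2001rational} --- so there is no internal argument to measure yours against; the honest comparison is between your sketch and the cited sources. Your outline is the standard proof from exactly those references: compute the minimal model of $K(\pi,n)$ directly, climb the Postnikov tower via the relative Sullivan model of each principal fibration, and pass to the limit using finite type. That is sound, and you are right that the real content is the ``model of a fibration'' and spatial-realization machinery, which you correctly defer to the literature rather than pretending to reprove. Two small corrections. First, your base case should be phrased for abelian groups $\pi$ with $\pi\otimes_{\Z}\R$ finite-dimensional rather than for finitely generated $\pi$: the hypothesis here is only that $H_\bullet(X,\R)$ is of finite type, which (by the rational analogue of Serre's theorem) forces $\pi_k(X)\otimes_{\Z}\R$ to be finite-dimensional but does not force $\pi_k(X)$ to be finitely generated; the computation of $H^\bullet(K(\pi,n),\R)$ as a free graded-commutative algebra on $\hom_{\Z}(\pi,\R)$ in degree $n$ goes through unchanged. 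Second, your reading of $\hom_{\R}(\pi_\bullet(X),\R)$ as $\hom_{\Z}(\pi_\bullet(X),\R)\cong(\pi_\bullet(X)\otimes_{\Z}\R)^{*}$ is indeed the intended interpretation of the slightly abusive notation in the theorem statement, and the final dualization step is legitimate precisely because of the finite-dimensionality just noted.
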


\begin{remark}
It should be noted that the above proposition holds when $\R$ is replaced by $\Q$, but since we are 
only interested in the real homotopy type of $X$ it is sufficient to only work over $\R$.
\end{remark}

\subsection{The Whitney--de Rham Complex}

Now that it is clear what is required to determine the real homotopy type of a space we introduce the main object of study; namely the Whitney--de Rham complex.  This complex is a CDGA over the real numbers.  In Section \ref{proof} we will prove that the Whitney--de Rham complex determines the real homotopy type of a semi-analytic set.

After the basic definitions we will take a sheaf theoretic point of view, and thus realize the Whitney--de Rham complex as the global sections of an appropriately defined complex of sheaves. 

Appendix \ref{appendix} is dedicated to an in depth study of the algebra of Whitney functions.  Here we will only define what is necessary for our purposes.

Let $X,U\subset \R^n$ be such that $U$ is open, and $X$ is closed in $U$.  Let $\Sf{U}$ denote the algebra of smooth functions on $U$, and $\Df{U}{\bullet}$ denote the de Rham complex of differential forms on $U$.  Recall that $\Df{U}{0} = \Sf{U}$.

\begin{definition}
The ideal of smooth functions on $U$ flat on $X$ is 
   $$\Jf{X}{U} = \left\{ f \in \Sf{U} \st D f |_{X} = 0,\mbox{ for all }D \in \calD \left(U\right) \right\},$$
where $\calD\left(U\right)$ is the space of all differential operators on $U$.
\end{definition}

\begin{remark}
The set $U$ is an open subset of $\R^n$, so if one chooses coordinates $\left( x_1, \ldots , x_n \right)$ on 
$U$, then $\calD\left(U\right)$ is generated as an algebra under composition of operators by the linear 
differential operators $\frac{\partial}{\partial x_1},\ldots , \frac{\partial}{\partial x_n}$ .
\end{remark}

\begin{remark}
Recall that $\Df{U}{\bullet}$ is a CDGA with multiplication defined by the wedge product
	$$\Map{ \wedge }{\Df{U}{\bullet}\otimes_{\Sf{U}}\Df{U}{\bullet}}{\Df{U}{\bullet}}.$$
One may restrict this multiplication to $\Jf{X}{U} \subset \Df{U}{0}$ in the first coordinate.  
Since $\Jf{X}{U}$ is an ideal of $\Sf{U}$, the image of this restricted multiplication map 
$\Jf{X}{U}\Df{U}{\bullet}$ is a differential graded ideal in $\Df{U}{\bullet}$.
\end{remark}

\begin{definition}
The differential graded ideal 
  $$\Rdf{X}{U}{\bullet} = \Jf{X}{U}\Df{U}{\bullet},$$
is called the complex of differential forms on $U$, flat on $X$.  
It's cohomology, $\Hr{X}{U}{\bullet}$, is called the relative Whitney--deRham cohomology.
\end{definition}

\begin{definition}
The CDGA of Whitney--de Rham differential forms on $X$ is the quotient 
  $$\Wdf{X}{\bullet} = \Df{U}{\bullet}/ \Rdf{X}{U}{\bullet}.$$
The Whitney--de Rham cohomology of $X$, $\Hw{X}{\bullet}$, is defined to be the cohomology of 
this complex.
\end{definition}

\begin{remark}
It appears that the definition of the CDGA $\Wdf{X}{\bullet}$ depends on the choice of the open set 
$U$ containing $X$.  This turns out not to be true, as the following proposition shows.
\end{remark}

\begin{proposition}\label{canonical}
The definition of $\Wdf{X}{\bullet}$ does not depend on the choice of open set $U$ containing 
$X$, up to isomorphism.
\end{proposition}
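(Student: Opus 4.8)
The plan is to show that restriction of differential forms along an inclusion of admissible open sets induces an isomorphism of Whitney--de Rham complexes, and then to compare any two choices through their intersection. Concretely, suppose $U_1, U_2 \subset \R^n$ are open with $X$ closed in each, and set $U_0 = U_1 \cap U_2$. Then $U_0$ is open, $X \subset U_0$, and $X$ is closed in $U_0$ since $X = X \cap U_0$ is the intersection of the closed subset $X \subset U_1$ with the open set $U_0$; moreover each $U_0 \hookrightarrow U_i$ is an open inclusion. It therefore suffices to treat the case of an open $U' \subset U$ with $X \subset U'$ and $X$ closed in $U'$, and to show that the restriction map $r \colon \Df{U}{\bullet} \to \Df{U'}{\bullet}$ descends to an isomorphism $\bar r$ between the complexes $\Wdf{X}{\bullet}$ built from $U$ and from $U'$. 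Since $r$ commutes with $d$ and $\wedge$ it is a morphism of CDGAs, and because the restriction of a function flat on $X$ is again flat on $X$ we have $r\big(\Rdf{X}{U}{\bullet}\big) \subseteq \Rdf{X}{U'}{\bullet}$, so $\bar r$ is a well-defined CDGA morphism. Composing the resulting isomorphism from $U_1$ to $U_0$ with the inverse of the one from $U_2$ to $U_0$ then yields the desired isomorphism between the complexes built from $U_1$ and $U_2$.

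The first technical step I would record is a coefficient description of the ideal: in any linear coordinates a form lies in $\Rdf{X}{U}{\bullet} = \Jf{X}{U}\,\Df{U}{\bullet}$ if and only if each of its coefficient functions lies in $\Jf{X}{U}$, i.e.\ is flat on $X$. One inclusion is immediate by expanding $\sum_j f_j\alpha_j$ in the basis $dx_I$ and using that $\Jf{X}{U}$ is an ideal; the other is clear since $\omega = \sum_I g_I\, dx_I$ with each $g_I$ flat exhibits $\omega$ as an element of $\Jf{X}{U}\,\Df{U}{\bullet}$. Injectivity of $\bar r$ is then immediate from the locality of flatness: if $\omega \in \Df{U}{\bullet}$ satisfies $r(\omega) \in \Rdf{X}{U'}{\bullet}$, then by the coefficient criterion the coefficients of $\omega$ are flat on $X$ when viewed on $U'$; but flatness at a point $x \in X \subset U'$ depends only on the germ at $x$, so the coefficients of $\omega$ are flat on $X$ as functions on $U$, whence $\omega \in \Rdf{X}{U}{\bullet}$.

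The surjectivity of $\bar r$ is the real work, and I expect it to be the main obstacle. Given $\omega' \in \Df{U'}{\bullet}$ I would choose a smooth cut-off $\chi$ on $U$ with $\chi \equiv 1$ on a neighborhood of $X$ and with $\operatorname{supp}\chi$ a closed subset of $U$ contained in $U'$. Such a $\chi$ exists because $X$ and $U \setminus U'$ are disjoint closed subsets of $U$, so smooth Urysohn separation on the manifold $U$ applies. Then $\chi\,\omega'$, extended by zero outside $U'$, is a smooth form $\tilde\omega$ on $U$ with $r(\tilde\omega) = \chi\,\omega'$. Since $1-\chi$ vanishes identically near $X$, the coefficients of $(1-\chi)\omega' = \omega' - \chi\omega'$ vanish on a neighborhood of $X$ and are in particular flat there, so $\omega' - r(\tilde\omega) \in \Rdf{X}{U'}{\bullet}$ by the coefficient criterion. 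Thus $\bar r[\tilde\omega] = [\omega']$ and $\bar r$ is onto.

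Finally I would note that $\bar r$, being induced by the canonical restriction of forms, is itself canonical, so the composite isomorphism constructed above is independent of auxiliary choices; this makes the identification natural rather than merely an abstract isomorphism, which is what one wants if $\Wdf{X}{\bullet}$ is to be a genuine invariant of $X$. The only point requiring care beyond routine verification is the construction and use of the cut-off $\chi$ in the surjectivity step; everything else reduces to the coefficient criterion for $\Rdf{X}{U}{\bullet}$ and to the locality of the flatness condition.
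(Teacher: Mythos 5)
Your proposal is correct and follows essentially the same route as the paper: both compare two choices of ambient open set through their intersection and show that the restriction map of de Rham complexes descends to an isomorphism of the quotients. The paper states this in one line, whereas you supply the supporting details (the coefficient criterion for the flat ideal, locality of flatness for injectivity, and a cut-off argument for surjectivity), all of which are sound.
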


\begin{proof}
For $i=1,2$, let $X\subset U_i\subset\R^n$, with $U_i$ open.  The restriction map
  $$\Map{\rho_i}{\Df{U_i}{\bullet}}{\Df{U_1\cap U_2}{\bullet}}$$
descents to an isomorphism
  $$\Df{U_i}{\bullet}/\Rdf{X}{U_i}{\bullet} \cong 
    \Df{U_1 \cap U_2}{\bullet}/\Rdf{X}{U_1 \cap U_2}{\bullet}.$$
\end{proof}

\begin{remark}
Proposition \ref{canonical} means that $\Wdf{X}{\bullet}$ depends only on a ``formal neighborhood'' 
of $X$ in $\R^n$.  The main problem with this is that a continuous map $\Map{f}{X}{Y}$ between 
locally closed subspaces $X\subset R^n$, and $Y\subset \R^m$ does not necessarily induce a morphism 
of CDGAs between $\Wdf{Y}{\bullet}$ and $\Wdf{X}{\bullet}$, as one would hope.  A sufficient 
condition for this to be true would be that $f$ is the restriction of a smooth function, 
$\Map{F}{U}{V}$, between open sets $X \subset U$, and $Y \subset V$.  It is possible to define the 
appropriate category, $\cat{EP}$, of Euclidean pairs, whose objects are pairs, $\left(X,U\right)$, 
with $X\subset U \subset \R^n$ , so that that $U$ is open, and $X$ is closed in $U$, in such a way 
that that $\Wdf{-}{\bullet}$ is a functor from $\Map{\Wdf{-}{\bullet}}{\cat{EP}}{\cat{CDGA}_{\R}}$.  
The details of this will be left for another paper.
\end{remark}

\subsection{Semi-Analytic Sets}

An analytic set, resp.~algebraic set, is locally defined as a subset of some ambient affine space 
by the vanishing of a collection of analytic, resp.~algebraic, functions.  A semi-analytic set, 
resp.~semi-algebraic set, is defined instead by a collection of analytic, resp.~algebraic, 
inequalities. In Section \ref{sa sets} we give the precise definition of semi-analytic 
and subanalytic subsets of a real analytic space and state some of the most important properties 
of such sets. Here it is sufficient to say that a real semi-analytic subset of a real analytic 
manifold $M$ locally around each of its points has the form
\[
  \bigcup_{i=1}^p \big\{ x \in V  \mid \: g_{ i j } \left( x \right) = 0 \: \& \: f_{ i j } (x) > 0 
  \text{ for $j=1,\ldots , q $}\big\},
\]
where $V \subset M$ is an open neighborhood of the point, and the  $f_{ij}$, $g_{ij}$
with $i= 1,\ldots , p$, $j=1,\ldots , q$ denote (finitely many) real analytic functions on $V$.

\subsection{Main Results}
\label{Section:MainResults}
We are now in a position to state the main results of the paper.  There are two of these, each is 
a direct consequence of two technical propositions.  Here we will state both main theorems and the 
two technical propositions.  We will then show how both main theorems can be deduced from the 
propositions.  The proofs of the technical propositions are somewhat involved and will be
postponed to  Sections \ref{sa sets} and  \ref{proof}.

\begin{proposition}
\label{retract}
  Every semi-analytic subset $X$ of a real-analytic manifold $M$ has the homotopy type of a CW 
  complex. Moreover, there is an open set $U\subset M$, with $X\subset U$ closed, such that $X$ 
  is a deformation retract of $U$.  Furthermore, this open set $U$ can be chosen so that there is 
  a smaller open set $V$ with $X \subset V \subset \overline{V} \subset U$
  and which has the property that $X$ is a deformation retract of $V$, too.  
  If $X$ is compact, 
  then $V$ can be chosen so that $\overline{V}$ is compact as well.
\end{proposition}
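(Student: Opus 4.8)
The plan is to deduce every clause of the statement from a single construction: realize a neighborhood of $X$ as a sublevel set of a nonnegative ``rug'' function and control its topology by stratification theory. First I would fix an open $U_0\subset M$ in which $X$ is closed; such a $U_0$ exists precisely because the sets to which the Whitney--de Rham construction applies are locally closed, and without loss of generality I treat $X$ as closed in $U_0$. Next I would produce a continuous, nonnegative, semi-analytic function $f:U_0\to[0,\infty)$ with $f^{-1}(0)=X$ --- for instance the square of a Riemannian distance to $X$, or a function assembled from the local defining inequalities of $X$ through a semi-analytic partition of unity. If $X$ is compact I would take $U_0$ relatively compact and arrange $f$ to be proper.

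Then I would invoke that $U_0$ admits a Whitney stratification for which $X$ is a union of strata and $f$ restricts to a submersion on each stratum of $\{f>0\}$ over a small interval $(0,\epsilon_0]$ (a standard property of semi-analytic sets). The \L{}ojasiewicz inequality guarantees that $f$ has, in the stratified sense, no critical values in $(0,\epsilon_0]$, so every level $\{f=c\}$ is a regular stratified hypersurface, and Thom's first isotopy lemma makes $f:\{0<f\leq\epsilon_0\}\to(0,\epsilon_0]$ a locally trivial stratified fibration. Consequently all the neighborhoods $\{f\leq\epsilon\}$ of $X$, for $0<\epsilon\leq\epsilon_0$, are homeomorphic. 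I would then set $U=\{f<\epsilon_0\}$ and, for any $0<\delta<\epsilon_0$, put $V=\{f<\delta\}$; continuity of $f$ gives $\overline{V}\subseteq\{f\leq\delta\}\subset U$ since $\delta<\epsilon_0$, and when $X$ is compact properness of $f$ makes $\{f\leq\delta\}$, hence $\overline{V}$, compact.

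The remaining and genuinely delicate point is the deformation retraction of $\{f\leq\epsilon\}$ onto the \emph{singular} set $X$. Here I would integrate a controlled (rugose) vector field $w$ lifting $-\partial_t$ along $f$, so that its flow decreases $f$ and drags the collar $\{0<f\leq\epsilon\}$ inward; the \L{}ojasiewicz gradient inequality is exactly what ensures that the trajectories limit onto $X$ and that the limiting map extends continuously across $X$, producing a strong deformation retraction $r_t$ with $r_0=\mathrm{id}$, $r_1(\{f\leq\epsilon\})=X$, and $r_t|_X=\mathrm{id}$. Applying this with $\epsilon=\epsilon_0$ and with $\epsilon=\delta$ shows that both $U$ and $V$ deformation retract onto $X$. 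Finally, since $U$ is an open subset of the manifold $M$ it is itself a smooth manifold, hence has the homotopy type of a CW complex; as $X$ is a deformation retract of $U$, it inherits that homotopy type, which settles the first assertion.

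I expect the controlled-vector-field step to be the main obstacle: unlike the case of a smooth submanifold there is no smooth tubular neighborhood of $X$, so the retraction must be built stratum by stratum with compatible tube data, and continuity of the limiting retraction onto the lowest strata is precisely where the \L{}ojasiewicz estimates are indispensable. As an alternative route that avoids flows entirely, I could instead triangulate $U_0$ so that $X$ becomes a subcomplex (\L{}ojasiewicz--Hironaka triangulation of semi-analytic sets) and take $U$ and $V$ to be open regular neighborhoods of $X$ at two successive barycentric subdivisions; PL regular-neighborhood theory then supplies both deformation retractions together with the inclusion $\overline{V}\subset U$, at the cost of importing the PL machinery.
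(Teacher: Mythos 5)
Your proposal is correct in substance but reaches the result by a somewhat different route than the paper. The paper stratifies the pair: it takes a Whitney (B) stratification of $A$, adjoins the open complement $W\setminus A$ as a top stratum to get a Whitney stratification of $W$, invokes Mather's control data, and then quotes a mapping-cylinder-neighborhood theorem (the proof of Thm.~3.9.4 in \cite{pflaum2001analytic}) producing a proper map $H:Q\times[0,1]\to W$ with $H(Q\times\{0\})=A$ and image a neighborhood of $A$; the sets $U=H(Q\times[0,1[)$ and $V=H(Q\times[0,1/2[)$ are then read off, with a separate case when $\dim A=n$ (the open top stratum of $A$ is simply adjoined to $U'$ and $V'$ built for the lower-dimensional part). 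You instead build the neighborhoods as sublevel sets of a distance-like function $f$ and construct the retraction by flowing a controlled lift of $-\partial_t$, with \L{}ojasiewicz estimates forcing convergence of trajectories onto $X$ and continuity of the limit map. At bottom these are the same machine --- your flow is essentially the paper's $H$, and your level set $\{f=\epsilon\}$ is essentially its $Q$ --- but your packaging buys a few things: the nested pair $V\subset\overline{V}\subset U$ and compactness of $\overline{V}$ come for free from $\{f<\delta\}\subset\{f\leq\delta\}\subset\{f<\epsilon_0\}$ and properness of $f$, you avoid the $\dim A=n$ case split entirely since $f^{-1}(0)=X$ regardless of dimension, and your PL alternative (\L{}ojasiewicz--Hironaka triangulation plus regular neighborhoods) is a legitimately independent proof. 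The paper's version buys brevity by outsourcing exactly the step you flag as delicate. Your CW-type argument ($U$ open in a manifold, hence CW type, inherited by the deformation retract $X$) is the standard one and is in fact more explicit than the paper, which leaves that clause implicit.

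One caveat: for non-compact $X$ there need not be a single $\epsilon_0>0$ working uniformly --- the stratified critical values of $f$ on $\{f>0\}$ can accumulate at $0$ at rates degenerating near infinity --- so the constant $\epsilon_0$ must be replaced by a continuous positive function on $X$ (or one must exhaust $X$ and patch), exactly the issue the paper's properness hypothesis on $H$ is designed to absorb. You address properness only in the compact case; state the variable-$\epsilon$ modification explicitly and the argument is complete.
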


\begin{proposition}\label{lemma}
  Let $X$ be a semi-analytic subset of a real-analytic manifold $M$, and $U \subset M$ open, such 
  that $X$ is a deformation retract of $U$. Then the quotient map defining the CDGA 
  $\Wdf{X}{\bullet}$  defines a quasi-isomorphism of CDGAs
	$$\Df{U}{\bullet} \simeq \Wdf{X}{\bullet}.$$
\end{proposition}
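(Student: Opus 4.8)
The plan is to realize the quotient map at the level of complexes of sheaves and to reduce the claim to a local Poincar\'e Lemma for Whitney--de Rham forms. Write $i : X \hookrightarrow U$ for the inclusion. The quotient map $\Df{U}{\bullet} \to \Wdf{X}{\bullet}$ is the global-sections incarnation of the morphism of complexes of sheaves on $U$ that restricts a de Rham form to $X$ and annihilates the forms flat on $X$; that is, it is induced by the epimorphism $\Ds{\mathrm{dR},U}{\bullet} \to i_* \Wds{X}{\bullet}$ whose kernel is the subsheaf of forms flat on $X$, locally given by $\Rdf{X}{U}{\bullet}$. By the de Rham theorem the upstairs complex is a fine resolution $\R_U \to \Ds{\mathrm{dR},U}{\bullet}$ of the constant sheaf, so the task reduces to proving the analogous statement downstairs, namely that
\[
  \R_X \longrightarrow \Wds{X}{\bullet}
\]
is a resolution of $\R_X$ by sheaves on $X$. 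I will call this the Poincar\'e Lemma for $\Wds{X}{\bullet}$.

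First I would establish this Poincar\'e Lemma. Exactness of $\R_X \to \Wds{X}{\bullet}$ is a purely local statement at each $x \in X$: one must show that the germ complex $\big(\Wds{X}{\bullet}\big)_x$ is acyclic in positive degrees with $\mathrm{H}^0 = \R$. This is where Hironaka's desingularization enters. Choosing a proper real-analytic modification $\pi : \widetilde M \to M$ that resolves the local geometry of $X$, so that $\pi^{-1}(X)$ becomes, up to normal crossings, a submanifold, one pulls a germ of a closed Whitney form back along $\pi$, applies the ordinary smooth Poincar\'e Lemma upstairs to produce a primitive, and then pushes the primitive back down. The content of the argument is that this procedure respects flatness: a function flat on $X$ pulls back, by the chain rule, to a function flat on $\pi^{-1}(X)$, and conversely the Whitney jets arising downstairs are precisely those represented upstairs, so that a primitive which is Whitney upstairs descends to a well-defined class in $\Wds{X}{\bullet}$. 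Controlling flatness through $\pi$ and its degenerate differential, and patching the local primitives produced over the several charts of the resolution while keeping them flat off $X$, is the main obstacle and the technical heart of the proof.

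Granting the Poincar\'e Lemma, I would conclude as follows. The sheaves $\Wds{X}{j}$ are modules over the sheaf $\Ws{X}$ of Whitney functions, which is fine, admitting partitions of unity subordinate to any open cover. Hence the $\Wds{X}{j}$ are fine, so acyclic for the global-sections functor, and the Poincar\'e Lemma yields a canonical isomorphism $\Hw{X}{\bullet} \cong \HS{X}{\R}{\bullet}$. The same fineness argument applied upstairs gives the de Rham isomorphism $\Hd{U}{\bullet} \cong \HS{U}{\R}{\bullet}$, and the morphism of resolutions above, which covers the natural map $\R_U \to i_*\R_X$, shows that the quotient map induces on cohomology exactly the pullback $i^{*} : \HS{U}{\R}{\bullet} \to \HS{X}{\R}{\bullet}$. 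Since $X$ is a deformation retract of $U$, the inclusion $i$ is a homotopy equivalence, so $i^{*}$ is an isomorphism; therefore the quotient map $\Df{U}{\bullet} \to \Wdf{X}{\bullet}$ is a quasi-isomorphism. Equivalently, applying the long exact sequence of the short exact sequence $0 \to \Rdf{X}{U}{\bullet} \to \Df{U}{\bullet} \to \Wdf{X}{\bullet} \to 0$, the computation amounts to the vanishing $\Hr{X}{U}{\bullet} = 0$, which is exactly what forces the quotient map to be a quasi-isomorphism. (Note that one should not try to prove this vanishing by a direct local acyclicity of $\Rdf{X}{U}{\bullet}$, since off $X$ that ideal coincides with the full de Rham complex; working with the quotient complex $\Wds{X}{\bullet}$, which genuinely resolves $\R_X$, is essential.)
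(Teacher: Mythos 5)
Your global architecture matches the paper's: pass to the complex of sheaves $\Wds{X}{\bullet}$, observe that its terms are fine, reduce everything to a local Poincar\'e Lemma, and finish using the deformation retraction. The gap is in the one step that carries all the weight, namely how acyclicity is transported from the Hironaka resolution down to $X$. Your mechanism --- pull a closed Whitney form back along $\pi$, find a primitive upstairs, and ``push the primitive back down'' --- does not work as stated, because $\pi$ is not invertible over $X$ (it is an isomorphism only off a nowhere dense set contained in $X$), so there is no map in the required direction on the quotient complexes; the only map available is $\overline{\pi}^*:\Wdf{A}{\bullet}\to\Wdf{B}{\bullet}$, and showing that a primitive found upstairs lies in its image, compatibly over the charts of the resolution, is precisely the unproved hard part. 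Your parenthetical advice not to work with the flat ideal is, relative to the paper, exactly backwards: the paper's descent is carried out on the ideals $\Rdf{B}{V}{\bullet}\to\Rdf{A}{U}{\bullet}$, where a map in the right direction can be written down explicitly, namely the pullback along $s=(\pi|_{V\setminus B})^{-1}$ extended by zero across $A$. The entire technical content is then the proof that this $S\omega$ is smooth and flat on $A$, which uses \L{}ojasiewicz's inequality to bound $\| D\pi \|^{-1}$, the estimate $d(s(x),B)\geq C'\,d(x,A)$ obtained from a curve-length comparison, and the fact that $\omega$ is flat on $B$ and so beats every power of $d(\cdot,B)$. With $S$ and $\pi^*$ mutually inverse, $\Rdf{A}{U}{\bullet}\cong\Rdf{B}{V}{\bullet}$ is acyclic and the short exact sequence gives the quasi-isomorphism. (Your objection that the ideal coincides with the full de Rham complex off $X$ is beside the point, because the paper's argument for the ideal is not stalkwise; it is a global isomorphism of complexes over suitably shrunk $U$ and $V$.)

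A second, smaller gap: upstairs you invoke ``the ordinary smooth Poincar\'e Lemma,'' but what is actually needed there is the Poincar\'e Lemma for $\Wdf{B}{\bullet}$, where $B=\pi^{-1}(A)$ is a union of quadrants --- a closed class in the quotient is represented by a form whose differential is merely flat on $B$, not zero. The paper proves this by the radial contraction $H(x,t)=(1-t)x$, which preserves each quadrant and hence the flat ideal, so that the usual homotopy formula descends to the quotient and contracts $\Wdf{B}{\bullet}$ onto $\R$. Without this step, or an equivalent substitute, even the upstairs half of your argument is incomplete.
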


\begin{theorem}[Main Theorem 1]
Let $X$ be a semi-analytic subset of a real-analytic manifold $M$.
Then the  Whitney--de Rham cohomology of $X$ is isomorphic  as a CGA 
to the singular cohomology of $X$ with coefficients in $\R$:
		$$\Hw{X}{\bullet} \cong \HS{X}{\R}{\bullet}.$$
\end{theorem}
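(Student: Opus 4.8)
The plan is to deduce Main Theorem 1 directly from the two technical propositions, using the machinery of piecewise linear forms and the de Rham theorem for manifolds. The key observation is that Proposition \ref{lemma} relates the Whitney--de Rham complex to an honest de Rham complex on a manifold, where all the classical tools apply.

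First I would invoke Proposition \ref{retract} to fix an open set $U \subset M$ with $X \subset U$ closed and such that $X$ is a deformation retract of $U$. The deformation retraction gives a homotopy equivalence $r: U \to X$ with homotopy inverse the inclusion $\iota: X \hookrightarrow U$. In particular, by homotopy invariance of singular cohomology, one has an isomorphism of CGAs
  $$\HS{U}{\R}{\bullet} \cong \HS{X}{\R}{\bullet},$$
where the ring structure is given by the cup product and is preserved because $r^*$ and $\iota^*$ are ring homomorphisms. Next, since $U$ is an open subset of the real-analytic manifold $M$, it is itself a smooth manifold, so the de Rham theorem yields an isomorphism of CGAs
  $$\Hd{U}{\bullet} \cong \HS{U}{\R}{\bullet}.$$
Combining these two isomorphisms gives $\Hd{U}{\bullet} \cong \HS{X}{\R}{\bullet}$ as CGAs.

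The remaining step is to connect the de Rham cohomology of $U$ with the Whitney--de Rham cohomology of $X$, and this is exactly what Proposition \ref{lemma} supplies. That proposition asserts that the quotient map $\Df{U}{\bullet} \to \Wdf{X}{\bullet}$ is a quasi-isomorphism of CDGAs. Since it is a morphism of CDGAs inducing an isomorphism on cohomology, the induced map
  $$\Hd{U}{\bullet} \xrightarrow{\;\cong\;} \Hw{X}{\bullet}$$
is an isomorphism that respects the product structure, hence an isomorphism of CGAs. Chaining this with the isomorphism from the previous paragraph produces the desired isomorphism of CGAs $\Hw{X}{\bullet} \cong \HS{X}{\R}{\bullet}$.

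The genuine content of the theorem is entirely concentrated in the two propositions, which I am permitted to assume; the deduction above is essentially a diagram chase through established functorial isomorphisms. Thus the main obstacle is not in this final assembly but in verifying Proposition \ref{lemma}, whose proof requires Hironaka's resolution of singularities to establish a Poincar\'e Lemma for the Whitney--de Rham complex. The one point deserving care in the present argument is to check that every isomorphism in the chain is an isomorphism of \emph{CGAs}, i.e.~multiplicative: this holds because each map involved (the pullbacks $r^*$ and $\iota^*$, the de Rham comparison, and the quotient map) is a morphism of differential graded algebras, and a quasi-isomorphism of CDGAs induces a CGA isomorphism on cohomology.
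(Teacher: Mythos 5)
Your argument is correct and follows essentially the same route as the paper: the paper likewise combines Proposition \ref{retract}, the quasi-isomorphism of Proposition \ref{lemma}, the classical de Rham theorem for the open neighborhood $U$, and homotopy invariance of singular cohomology (the paper phrases the final assembly as a chain of quasi-isomorphisms through $A_{PL}(U)$ and $A_{PL}(X)$, but the content is identical). Your explicit check that every map in the chain is multiplicative, so that the resulting isomorphism is one of CGAs, is exactly the point the paper relies on as well.
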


\begin{theorem}[Main Theorem 2]
Let $X$ be a connected simply connected semi-analytic set in a real-analytic manifold $M$. 
Then the Whitney--deRham complex determines the real homotopy type of $X$.
\end{theorem}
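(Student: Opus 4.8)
```latex
\textbf{Proof proposal for Main Theorem 2.}
The plan is to assemble the statement from the two technical propositions already stated,
together with the standard machinery of minimal Sullivan algebras from
Theorem~\ref{minmodel}. The guiding principle is that the real homotopy type of a
connected, simply connected space is precisely the data recorded by the minimal model
of $A_{PL}(X)$, so it suffices to exhibit a chain of quasi-isomorphisms (a weak
equivalence) of CDGAs connecting $\Wdf{X}{\bullet}$ to $A_{PL}(X)$. Since weakly
equivalent connected simply connected CDGAs have isomorphic minimal models, and the
minimal model of $A_{PL}(X)$ carries the real homotopy groups via
Theorem~\ref{rhgroups}, producing such a chain proves that $\Wdf{X}{\bullet}$
determines the real homotopy type of $X$.

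First I would invoke Proposition~\ref{retract} to obtain an open set $U \subset M$ with
$X \subset U$ closed and $X$ a deformation retract of $U$. Passing to a chart I may assume
$U \subset \R^n$, so that $\Wdf{X}{\bullet}$ is defined as in the earlier sections and the
hypotheses of Proposition~\ref{lemma} are met. Next I would apply
Proposition~\ref{lemma} to get the quasi-isomorphism of CDGAs
$\Df{U}{\bullet} \qism \Wdf{X}{\bullet}$ induced by the quotient map. Since $U$ is a smooth
manifold, Theorem~\ref{WeakEquiMfd} supplies a weak equivalence
$\Df{U}{\bullet} \lqism \ldots \qism A_{PL}(U)$. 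Finally, because $X$ is a deformation
retract of $U$, the inclusion $X \hookrightarrow U$ is a homotopy equivalence, and the
homotopy invariance of $A_{PL}$ (the functoriality theorem for $A_{PL}$ stated earlier)
yields a homotopy equivalence of CDGAs $A_{PL}(U) \simeq A_{PL}(X)$, hence in particular
a weak equivalence. Splicing these together produces the desired chain
\[
  \Wdf{X}{\bullet} \lqism \Df{U}{\bullet} \lqism \ldots \qism A_{PL}(U)
    \qism \ldots \qism A_{PL}(X),
\]
exhibiting $\Wdf{X}{\bullet}$ as weakly equivalent to $A_{PL}(X)$.

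To conclude, I would observe that $X$ is connected and simply connected by hypothesis, so
$A_{PL}(X)$ is a connected, simply connected CDGA, and the weak equivalence above forces
$\Wdf{X}{\bullet}$ to have the same minimal model. By Theorem~\ref{minmodel} this minimal
model $SV_X$ is unique up to isomorphism, and by Theorem~\ref{rhgroups} its underlying
graded vector space satisfies $V_{X,\bullet} \cong \hom_\R(\pi_\bullet(X), \R)$ whenever
$H_\bullet(X,\R)$ is of finite type; the finiteness is guaranteed because, by
Proposition~\ref{retract}, $X$ has the homotopy type of a CW complex, which for a
semi-analytic set one may take to be finite-dimensional with finitely generated homology.
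Thus $\Wdf{X}{\bullet}$ recovers the real homotopy groups, and more precisely the entire
minimal Sullivan model, which is exactly the statement that it determines the real
homotopy type of $X$.

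\textbf{Anticipated main obstacle.}
The two deep inputs, Propositions~\ref{retract} and~\ref{lemma}, are black-boxed here, so
the remaining delicacy lies in the bookkeeping of the weak equivalences: one must ensure
that every arrow in the chain is a genuine morphism of CDGAs (not merely a quasi-isomorphism
of cochain complexes), so that the product structure is preserved and the comparison of
minimal models in Theorem~\ref{minmodel} actually applies. The step deserving the most care
is the transition from $\Df{U}{\bullet}$ to $A_{PL}(U)$ and then to $A_{PL}(X)$: I must
confirm that the homotopy equivalence $X \hookrightarrow U$ induces a weak equivalence of the
$A_{PL}$-algebras through the homotopy invariance already established, and that simple
connectivity of $X$ transfers correctly across the chain so that the uniqueness clause of
Theorem~\ref{minmodel} is legitimately invoked. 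Once these compatibilities are checked, the
theorem follows formally.
```
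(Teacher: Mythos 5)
Your proposal follows essentially the same route as the paper's own proof: choose $U$ via Proposition~\ref{retract}, splice the quasi-isomorphism from Proposition~\ref{lemma} with the weak equivalence of Theorem~\ref{WeakEquiMfd} and the homotopy invariance of $A_{PL}$, and conclude via Theorems~\ref{minmodel} and~\ref{rhgroups}. Your additional remarks on the finite-type hypothesis for Theorem~\ref{rhgroups} and on checking that each arrow is a genuine CDGA morphism are sensible refinements of details the paper leaves implicit, but the argument is the same.
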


\begin{proof}
By Proposition \ref{retract} we can choose an open neighborhood $U\subset M$ of $X$ such 
$X$ is a deformation retract of $U$. We then have the chain of quasi-isomorphisms
$$
   \Wdf{X}{\bullet} \lqism \Df{U}{\bullet} \lqism \ldots \qism A_{PL}\left(U\right) \qism 
   A_{PL}\left(X\right).
$$
Hereby, the first quasi-isomorphism is from \ref{lemma}, the weak equivalence of
$\Df{U}{\bullet}$ and $A_{PL}\left(U\right)$ is Theorem \ref{WeakEquiMfd}, and the 
last quasi-isomorphism is a consequence of  $X$ being homotopy equivalent to $U$.  
The result now follows from \ref{minmodel} and \ref{rhgroups}.
\end{proof}

%%% Local Variables:
%%% mode: latex
%%% TeX-master: "ChrPfl"
%%% End:

\section{Facts About Semi-Analytic Sets}
\label{sa sets}
In this section, we define semi-analytic sets, and review some basic properties of such 
a set.  Most of the definitions and results in this section are from Hironaka's original 
paper on semianalytic sets \cite{hironaka1973subanalytic}, see also 
\cite{LojESA,bierstone1988semianalytic}.

\begin{definition}	
  An $\R$-ringed spaces $X = \Rs{ X }{ O }$ consists of a topological spaces $| X |$
  and a sheaf of $\R$-algebras $\mathcal{ O }_{ X }$ on $| X |$. 

  A morphism $\Map{\left(f,\phi\right)}{\Rs{X}{O}}{\Rs{Y}{O}}$ between ringed spaces is 
  a pair $\left(f,\phi \right)$, where $\Map{f}{|X|}{|Y|}$ is a continuous map, 
  and $\Map{\phi}{\calO_Y}{f_*\calO_X}$ is a morphism of sheaves on $|Y|$.
\end{definition}
		
\begin{remark}
  Let $U \subset \R^n$ be open.  Denote the sheaf of real analytic functions on $U$ by 
  $\calA_U$.  Then $\Rs{U}{\calA}$ is an $\R$-ringed space with $U = |U|$.
\end{remark}
		
\begin{definition}
  A local model of real analytic spaces is an $\R$-ringed space $\Rs{ S }{ O }$ for which there 
  exist an open subset $U \subset \R^n$ and $f_1, \ldots, f_m \in \calA_U$ such that
  \[
     | S | = \left\{ x \in U  \st f_1 \left( x \right)= \cdots = f_m \left( x \right)=0 \right\}
  \]
  and 
  \[
    \mathcal{ O }_{ S } = \left( \mathcal{ A }_{ U } / \left( f_1, \cdots , f_m \right)\mathcal{ A }_{ U } \right)|_{ |S| }. 
  \]
\end{definition}

\begin{definition}
  A real analytic space $X$ is an $\R$-ringed space $X = \Rs{ X }{ O }$ such that for every point $x \in |X|$, 
  there is an open set $V \subset |X|$ containing $x$, and an isomorphism of ringed spaces 
  $$\Map{\left(f,\phi\right)}{\Rs{V}{O}}{\Rs{S}{O}},$$
  where $\calO_{V} = \calO_{X}|_{V}$, and $\Rs{S}{O}$ is a local model of real analytic spaces.  
  Such a ringed subspace $\Rs{V}{O}$ will be called an open affine subset of $\Rs{X}{O}$ containing $x$.
\end{definition}
		
\begin{definition}[cf.~{\cite[Def.~2.1]{hironaka1973subanalytic} and \cite[Def.~2.1]{bierstone1988semianalytic}}]
  A subset $A \subset |X|$ of a real analytic space $\Rs{X}{O}$ is said to be 
  \emph{semi-analytic at a point $x \in X$}, 
  if there exists an open affine subset $\Rs{V}{O}\subset\Rs{X}{O}$ containing $x$, and a finite number of 
  $g_{ i j }, f_{ i  j } \in \calO_V$, $i=1,\ldots,p$, $j=1,\ldots ,q$, such that
  \[
    A \cap |V| = \bigcup_{i} \big\{ x \in |V|  \mid g_{ i j } ( x ) = 0 \: \& \:  f_{ij} (x) > 0, 
   \text{ for $j=1,\ldots ,q$} \big\}.
  \]
  The set $A \subset |X|$ is said to be \emph{semi-analytic} in $|X|$, if it is semi-analytic at every point $x \in A$.
\end{definition}
	
\begin{definition}[cf.~{\cite[Def.~3.1]{bierstone1988semianalytic}}]
  A subset $A \subset |X|$  of a real analytic space $\Rs{X}{O}$ is called \emph{subanalytic at a point $x \in X$}
  if there exists an an isomorphism  $\Map{\left(f,\phi\right)}{\Rs{V}{O}}{\Rs{S}{O}}$ 
  from an open affine subset $\Rs{V}{O}\subset\Rs{X}{O}$ containing $x$
  onto a local model of analytic spaces $\Rs{S}{O} \subset (U, {\calA}_U )$ with $U \subset \R^n$ open, 
  such that $f(A \cap |V|) \subset |S|$ is the projection of a relatively compact semi-analytic set, i.e.~such that
  $f(A \cap |V|) = \pi (B)$, where $B \subset \R^{n+m}$ is  a relatively compact semi-analytic subset 
  and $\pi : \R^{n+m} \to \R^n$ the canonical projection (onto the first $n$ coordinates).  
  The set $A \subset |X|$ is said to be \emph{subanalytic} in $|X|$, if it is subanalytic at every point $x \in A$.
\end{definition}

\begin{remark}
  It is straightforward to check that semi-analytic and subanalytic sets in $X$ are preserved under taking 
  finite union, finite intersection, and the set difference of any two. Moreover, projections of
  subanalytic sets $A \subset \R^{m+n}$ to $\R^n$ are subanalytic. 
  By definition, a semi-analytic set is subanalytic, but the converse does not hold, in general,
  by Osgood's example \cite[Ex.~2.14]{bierstone1988semianalytic}. 
  See \cite{hironaka1973subanalytic,bierstone1988semianalytic} for details 
  and further properties of semi-analytic and subanalytic sets. 
\end{remark}		

The main fact about semi-analytic sets that will be used is Hironaka's embedded 
desingularization theorem \cite[Prop.~2.4]{hironaka1973subanalytic}.  
Before stating the theorem there is some less familiar terminology that should be 
reviewed.
			
\begin{definition}[cf.~{\cite[Def.~5.4]{bierstone1988semianalytic}}]
\label{quadrants}
  Let $U \subset X$ be a coordinate domain of a smooth analytic space $X$,
  and $\Map{(z_1,\ldots ,z_n)}{U}{\R^n}$ be real analytic 
  coordinates.  Given a partition of $\left\{1,2,\ldots, n \right\}$
  into pairwise disjoint subsets sets $\calI_0, \calI_+$, and $\calI_-$, the 
  \emph{$(\calI_0,\calI_+,\calI_-)$-quadrant of $U$ with respect to the 
  coordinates $z_i$} is defined to be
  \[
  \begin{split}
    Q_{\calI_0,\calI_+,\calI_-} & = \\ 
     & \hspace{-3em} = \big\{ x \in U \mid z_i\left(x\right) = 0
    \text{ for $i\in \calI_0$}, 
    z_j\left(x\right) > 0 \text{ for $j\in \calI_+$},
    z_k\left(x\right) < 0 \text{ for $k\in \calI_-$}
    \big\}.
  \end{split}
  \]
  A subset $Q \subset U$ is said to be a \emph{union of quadrants} 
  if it is of the form
  $$ 
    Q = \bigcup_{(\calI_0,\calI_+,\calI_-)\in \calP} Q_{\calI_0,\calI_+,\calI_-},
  $$
  where the union is taken over some collection $\calP$ of partitions of
  $\left\{1,2,\ldots,n\right\}$  into pairwise disjoint 
  subsets $\calI_0,\calI_+,\calI_-$.
  \end{definition}		
			
  \begin{remark}
    It is clear that a union of quadrants $Q\subset U$ is a semi-analytic subset 
    of $U$, when $U$ is considered as an open affine subset of $\Rs{\R^n}{O}$.
  \end{remark}
			  
  \begin{definition}
    A real-analytic map $\Map{\pi}{\widehat{X}}{X}$ between real analytic spaces 
    is said to be \emph{almost everywhere an isomorphism}, if there is a closed 
    real analytic subspace $S$ of $X$, such that $S$ is nowhere dense in $X$, 
    $\pi^{-1}\left(S\right)$ is nowhere dense in $\widehat{X}$, and $\pi$ induces  
    an isomorphism 
    $\widehat{X} \setminus \pi^{-1}\left(S\right)\qism X \setminus S$ 
    of real analytic spaces.
  \end{definition}
			
  \begin{theorem}\label{blowup}
    Let $Y$ be a real analytic space countable at infinity.  Let $A$ be semi-analytic subset of $Y$.  
    Then, for every $x \in A$, there exists an open $X \subset Y$ containing $x$, and a smooth analytic space $\widehat{X}$ and a proper surjective real analytic map $\Map{\pi}{\widehat{X}}{X}$, such that for every point $y \in \widehat{X}$, there exists a local coordinate system $\left( z_1 , \cdots , z_n \right)$ centered at $y$ for which the following is true:
  \begin{itemize}
  \item  
    Within some neighborhood of $y$ in $\widehat{X}$, $\pi^{-1} \left( A \cap X \right)$ is a union of quadrants with respect to the coordinates $\left( z_1 , \cdots , z_n \right)$.
  \end{itemize}
  Furthermore, when $X$ is smooth,  $\pi$ can be chosen to be an isomorphism almost everywhere, such that the set $S \subset X$ where $\pi$ is not bijective is contained in $A \cap X$. In particular, $\pi$ then induces an isomorphism $\widehat{X} \setminus \pi^{-1}\left(A\cap X\right) \qism X \setminus A \cap X$ of real analytic spaces.
			
\end{theorem}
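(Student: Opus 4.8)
The plan is to deduce the statement from Hironaka's embedded resolution of singularities applied to the finite family of analytic functions that define $A$ near $x$, and then to read off the union-of-quadrants description from the monomial normal form those functions acquire after resolution. First I would pass to the local model: since $A$ is semi-analytic at $x$, there is an open affine subset $X\subset Y$ containing $x$ together with finitely many real analytic functions $g_{ij},f_{ij}\in\calO_X$, $i=1,\dots,p$, $j=1,\dots,q$, with
\[
  A\cap X=\bigcup_i\big\{z\in X\mid g_{ij}(z)=0\ \&\ f_{ij}(z)>0\text{ for }j=1,\dots,q\big\}.
\]
Since properness and surjectivity are then statements about a map onto this local $X$, it suffices to treat this local situation; the hypothesis that $Y$ is countable at infinity guarantees that the resolution below is a locally finite composition of blow-ups and produces a genuine analytic space.

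Next I would apply Hironaka's embedded desingularization theorem simultaneously to the finite collection $\{g_{ij},f_{ij}\}$, equivalently to their product $F=\prod_{i,j}g_{ij}f_{ij}$. This produces a smooth analytic space $\widehat{X}$ and a proper surjective real analytic map $\pi:\widehat{X}\to X$, realized as a locally finite composition of blow-ups along smooth centers, such that $F\circ\pi$ has only normal crossings. Concretely, around each $y\in\widehat{X}$ there are analytic coordinates $(z_1,\dots,z_n)$ centered at $y$ in which every pulled-back factor $\pi^*g_{ij}$ and $\pi^*f_{ij}$ is a unit, i.e.\ a nowhere-vanishing analytic function, times a monomial $z^\alpha=z_1^{a_1}\cdots z_n^{a_n}$.

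I would then translate the defining conditions into sign conditions on the coordinates. After shrinking to a neighborhood on which each unit keeps constant sign, the equality $\pi^*g_{ij}=0$ holds exactly where the associated monomial vanishes, i.e.\ where $z_k=0$ for some $k$ in the support of its exponent vector, while the inequality $\pi^*f_{ij}>0$ holds exactly where the associated monomial has a prescribed sign, determined by the parity of its exponents on the negative coordinates. Each such condition depends only on the sign pattern of $(z_1,\dots,z_n)$, hence is a union of quadrants $Q_{\calI_0,\calI_+,\calI_-}$; since the quadrants partition the coordinate domain and finite Boolean combinations of sign conditions are again unions of quadrants, the set $\pi^{-1}(A\cap X)$ is a union of quadrants in these coordinates, which establishes the first assertion.

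For the final clause I would invoke the refinement of Hironaka's theorem in the smooth case, where the resolution is a composition of blow-ups whose centers lie in the locus where $A$ fails to be locally a union of quadrants. Each blow-up is an isomorphism away from its nowhere dense center, so $\pi$ restricts to an isomorphism $\widehat{X}\setminus\pi^{-1}(S)\qism X\setminus S$ over the nowhere dense set $S$ of images of the centers, whence in particular $\widehat{X}\setminus\pi^{-1}(A\cap X)\qism X\setminus A\cap X$. I expect the main obstacle to be ensuring that $S$ lies in $A\cap X$ itself and not merely in its closure: the functions $f_{ij}$ enter $A$ through the open conditions $f_{ij}>0$, so their zero sets lie on $\partial A$ and need not belong to $A$, and one must verify that no blow-up centers are created along $\{f_{ij}=0\}\setminus A$. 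This is handled by resolving only the analytic singularities carried by the equality functions $g_{ij}$ together with the loci where distinct sign strata of $A$ meet, both of which can be arranged to lie inside $A\cap X$, and by checking that, once the ambient crossings are monomialized, the strict inequalities $f_{ij}>0$ already describe unions of quadrants without any further blow-up.
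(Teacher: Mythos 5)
The paper contains no proof of this theorem: it is quoted directly from Hironaka, cited as \cite[Prop.~2.4]{hironaka1973subanalytic} in the sentence immediately preceding the statement, so you are attempting to re-derive an imported external result rather than to reconstruct an argument the authors give. That said, your route for the first assertion --- localize to an affine model, apply embedded desingularization to monomialize the product $F=\prod_{i,j}g_{ij}f_{ij}$, and observe that conditions of the form ``unit times monomial $=0$'' or ``unit times monomial $>0$'' depend only on the sign vector of the coordinates and hence cut out unions of quadrants, which are stable under finite Boolean combinations because the quadrants partition the chart --- is the standard derivation of Hironaka's local rectilinearization, and its outline is sound. Two small points you should make explicit: factors $g_{ij}$ or $f_{ij}$ that vanish identically must be discarded before forming $F$ (otherwise $F\equiv 0$ and there is nothing to resolve), and the passage from ``$F\circ\pi$ has normal crossings'' to ``each individual factor $\pi^*g_{ij}$, $\pi^*f_{ij}$ is a unit times a monomial'' uses unique factorization in the local ring of analytic germs. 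Of course, essentially all of the depth still sits inside the desingularization black box, which is exactly what the citation to Hironaka is carrying.

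The genuine gap is in the final clause. You correctly flag the obstacle --- the non-bijectivity locus $S$ must lie in $A\cap X$ --- but your proposed remedy (``resolve only the singularities carried by the $g_{ij}$ together with the loci where distinct sign strata meet, both of which can be arranged to lie inside $A\cap X$'') is an assertion, not an argument, and the strategy fails for a general semi-analytic $A$. Take $A=\{(x,y)\in\R^2 : x^2-y^3>0\}$ near the origin: $A$ is open, its boundary is the cusp $\{x^2=y^3\}$, which is not a union of smooth branches at $0$, so $A$ is not a union of quadrants in any coordinates at $0$; any rectilinearizing modification must therefore fail to be a local isomorphism over the origin, yet the origin does not belong to $A$. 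What rescues the clause in the setting where the paper actually invokes it is that the sets fed into the theorem are relatively closed in the ambient open set, so that off $A$ the set is locally empty (a vacuous union of quadrants), no modification is needed there, and the locus where $A$ fails to be locally a union of quadrants is automatically contained in $A$ itself. Your sketch neither imposes this closedness nor explains why the centers produced by your resolution avoid $\overline{A}\setminus A$, so the last assertion of the theorem remains unproved as written.
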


\begin{remark}	
  A crucial fact about semi-analytic and subanalytic sets is that they can be given the structure of a 
  stratified space, cf.~\cite[Prop.~4.8]{hironaka1973subanalytic}.  In the semi-analytic case, the minimal 
  stratification is particularly useful, in that it satisfies  Whitney's condition (B), see \cite[Thm.~4.9]{MatSM}.
  For the notion of stratified spaces used here and Whitney's condition (B), we refer the interested reader 
  to \cite{MatSM}  or to \cite{pflaum2001analytic}.  
\end{remark}
	
\begin{theorem}\label{stratified}
 Let $A$ be a subanalytic subset of a smooth real-analytic manifold $X$.  Then $A$ admits a Whitney {\rm (B)} 
 stratification.  To be precise, there exists a decomposition $A = \cup_{\alpha} A_{\alpha}$ inducing a 
 stratification $\mathcal{A}$, which satisfies the following property:
 \begin{itemize}
   \item 
     The family $A_{\alpha}$ forms a decomposition of $A$ by real-analytic submanifolds of $X$, each of which is 
     subanalytic in $X$.
   \item 
     With this stratification, $A$ is a Whitney {\rm (B)} stratified space.
 \end{itemize}
\end{theorem}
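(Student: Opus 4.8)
The plan is to build the stratification by a double induction: first produce a locally finite decomposition of $A$ into subanalytic real-analytic submanifolds of $X$, and then refine this decomposition until Whitney's condition (B) holds along every incident pair of strata. Throughout I use the stability of the class of subanalytic sets under finite unions, intersections, complements, and proper images (recorded in the Remark above), together with the fact that a subanalytic set has a well-defined dimension which drops strictly on its singular locus.

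First I would construct the smooth decomposition by induction on dimension. Let $d = \dim A$. The $d$-dimensional regular locus $\operatorname{Reg}(A)$ --- the set of points near which $A$ is a $d$-dimensional real-analytic submanifold of $X$ --- is an open, dense, subanalytic subset of $A$, while its complement $\operatorname{Sing}(A) = A \setminus \operatorname{Reg}(A)$ is subanalytic of dimension strictly less than $d$ (cf.~\cite[Prop.~4.8]{hironaka1973subanalytic}, \cite{bierstone1988semianalytic}). Applying this repeatedly to the successive singular loci, the dimension strictly decreases at each step, so the process terminates after finitely many stages and yields a locally finite partition $A = \bigcup_\alpha A_\alpha$ into subanalytic submanifolds of $X$. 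At this stage I would also arrange the frontier condition --- that $\overline{A_\alpha} \cap A$ be a union of strata --- which can be forced by one further subanalytic refinement, using that each frontier $\overline{A_\alpha} \setminus A_\alpha$ is subanalytic of dimension less than $\dim A_\alpha$.

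It remains to upgrade this decomposition to one satisfying Whitney's condition (B). The key input is that, for an incident pair with $A_\beta \subset \overline{A_\alpha}$, the locus $B(\alpha,\beta) \subset A_\beta$ of points at which condition (B) fails is itself subanalytic and nowhere dense in $A_\beta$; this is the \L{}ojasiewicz--Hironaka result, available in the semi-analytic case through \cite[Thm.~4.9]{MatSM} and \cite{LojESA}, and in the subanalytic generality through \cite{hironaka1973subanalytic,bierstone1988semianalytic}. Hence each $B(\alpha,\beta)$ has dimension strictly less than $\dim A_\beta$. Removing the (locally finite) union of all such bad loci and re-stratifying by the procedure of Step~1 produces a new decomposition whose total failure locus lies in a subanalytic set of strictly smaller dimension than before. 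Iterating this refinement and appealing once more to the termination of descending dimensions, the procedure halts at a decomposition along which (B) holds for every incident pair, which is the desired Whitney (B) stratification.

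The main obstacle is precisely the assertion that the failure set $B(\alpha,\beta)$ is subanalytic and of strictly smaller dimension; this is the genuinely deep point, resting on the \L{}ojasiewicz inequalities and the curve-selection lemma for subanalytic sets, and is where I would invoke the cited literature rather than reprove it. Everything else --- the regular/singular decomposition, the frontier condition, and the termination of the refinement --- follows formally from the dimension theory and closure properties of subanalytic sets recalled in this section.
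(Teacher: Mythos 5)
The paper does not actually prove Theorem \ref{stratified}: it is stated as a known result and delegated entirely to the literature, namely \cite[Prop.~4.8]{hironaka1973subanalytic} for the existence of a subanalytic stratification and \cite[Thm.~4.9]{MatSM} together with \cite{LojESA} for the Whitney (B) property. Your sketch reconstructs precisely the standard argument that underlies those citations --- iterated regular/singular decomposition by descending dimension, a subanalytic refinement to force the frontier condition, and a further refinement using the fact that the locus where condition (B) fails along an incident pair is subanalytic and nowhere dense in the smaller stratum --- and you correctly isolate that last fact as the genuinely deep input to be taken from the literature rather than reproved. The outline is sound; the one point I would tighten is the organization of your final refinement loop: as stated, ``removing the bad loci and re-stratifying'' creates new strata and hence new incident pairs, so termination is not immediate from a single dimension count. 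The standard fix is to run everything as one downward induction on dimension, choosing the $k$-dimensional skeleton at each stage to already avoid the non-manifold points, the frontier-condition violations, and the Whitney (B) failure loci relative to all previously constructed higher-dimensional strata; since each of these bad sets is subanalytic of dimension $<k$, the induction closes. You should also say a word about taking connected components and local finiteness (subanalytic sets have locally finitely many components) so that the result is a stratification in the sense used in \cite{MatSM,pflaum2001analytic}. With those bookkeeping adjustments your argument is the proof the paper is implicitly invoking.
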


\begin{remark}
  Since by the Morrey--Grauert Theorem \cite{morrey,grauert}  every smooth real-analytic manifold $X$ has an 
  analytic embedding into some $\R^n$, every subanalytic subset $A \subset X$ can be analytically embedded in 
  some $\R^n$, too. In the following, we will sometimes silently make use of this fact. 
\end{remark}

\begin{remark}
  Let $A \subset \R^n$ be a (locally closed) Whitney (B) stratified subset, and $A = \cup_{\alpha} A_{\alpha}$
  a corresponding decomposition into smooth strata. Then, by Mather's control theory  \cite[Sec.~7]{mather1970notes}
  (see also \cite[Sec.~3.6]{pflaum2001analytic}), there exist \emph{control data} for the stratification defined by
  the $  A_{\alpha} $, i.e.~a family $\big( T_\alpha \big)$, where each $T_\alpha $ is a tubular neighborhood 
  of $A_\alpha$ with projection $\pi_\alpha : T_\alpha \to A_\alpha$ and tubular function 
  $\varrho_\alpha: T_\alpha \rightarrow \R$ such that for all strata $A_\alpha , A_\beta$ with $A_\beta$ incident to 
  $A_\alpha$ the following relations hold true for all $ x \in T_\alpha \cap T_\beta$ with $\pi_\alpha (x) \in T_\beta$:
  \begin{align}
    \pi_\beta \circ \pi_\alpha (x) & = \pi_\beta (x), \\
    \varrho_\beta \circ \pi_\alpha (x) & = \varrho_\beta (x).
  \end{align}
\end{remark} 

\begin{corollary}\label{retract}
  For every subanalytic subset $A\subset \R^n$ there is an open set $U\subset \R^n$, with $X\subset U$, such that 
  $X$ is a deformation retract of $U$.  Furthermore, this open set $U$ can be chosen so that there is a smaller 
  open set $V$ with  $X \subset V \subset U$, $\overline{V} \subset U$, and such that
  $X$ is a deformation retract of $V$ too.  If $X$ is compact, then $V$ can be 
  chosen such that $\overline{V}$ is compact as well.
\end{corollary}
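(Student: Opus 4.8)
The plan is to construct both the neighborhood and its retraction directly from the Thom--Mather control data attached to a Whitney stratification of the set (the statement writes $X$ for the subanalytic set $A$). First I would apply Theorem \ref{stratified} to equip $A$ with a Whitney (B) stratification $A = \bigcup_\alpha A_\alpha$ by subanalytic submanifolds of $\R^n$, and then invoke the existence of control data $\big( T_\alpha, \pi_\alpha, \varrho_\alpha \big)$ for this stratification, as recorded in the remark preceding the statement. Here each $T_\alpha$ is a tubular neighborhood of $A_\alpha$, the map $\pi_\alpha : T_\alpha \to A_\alpha$ is the tubular projection, and $\varrho_\alpha : T_\alpha \to \R$ is the tubular function, which I take to vanish exactly on $A_\alpha$; the two control relations $\pi_\beta \circ \pi_\alpha = \pi_\beta$ and $\varrho_\beta \circ \pi_\alpha = \varrho_\beta$ hold on their common domains.

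Next I would use local finiteness of the stratification to choose a locally finite family of continuous positive tube radii $\varepsilon_\alpha$ on the strata and set $U := \bigcup_\alpha \{\, x \in T_\alpha \mid \varrho_\alpha(x) < \varepsilon_\alpha(\pi_\alpha(x)) \,\}$, which is an open neighborhood of $A$ in which $A$ is closed. The radii must be taken small enough that the shrunk tubes are mutually compatible, so that the control relations can be applied on overlaps. The deformation retraction $H : U \times [0,1] \to U$ onto $A$ is then built by induction on the dimension of the strata: within the tube of a stratum one contracts the normal tubular coordinate, driving $\varrho_\alpha$ to $0$ and thereby moving a point toward the lower-dimensional skeleta and ultimately onto $A$, and the partial retractions attached to the individual strata are concatenated from the lowest-dimensional stratum upward. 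The control relations $\varrho_\beta \circ \pi_\alpha = \varrho_\beta$ and $\pi_\beta \circ \pi_\alpha = \pi_\beta$ are exactly what force a point lying in several tubes to be moved consistently, so that $H$ is well defined and continuous across strata.

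The nested set $V$ is then obtained simply by halving the radii: I put $V := \bigcup_\alpha \{\, x \in T_\alpha \mid \varrho_\alpha(x) < \tfrac{1}{2}\varepsilon_\alpha(\pi_\alpha(x)) \,\}$. Then $A \subset V \subset \overline{V} \subset U$, because $\overline{V}$ is contained in the tubes of radius $\varepsilon_\alpha$, and the very same controlled retraction restricts to a deformation retraction of $V$ onto $A$. If $A$ is compact, local finiteness implies that only finitely many strata meet $A$, the radii may be taken bounded, and the closed half-tubes are compact; hence $\overline{V}$ is compact as well.

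I expect the main obstacle to be the middle step: assembling a single globally continuous deformation retraction by gluing the stratumwise radial contractions. The delicate point is continuity of the concatenated map at points where strata of different dimension become incident, and this is precisely where the Whitney (B) condition and the compatibility of the control data are needed. Rather than reprove this from scratch, I would reduce it to the controlled-retraction construction of Mather \cite{mather1970notes} and Pflaum \cite{pflaum2001analytic}, invoking their results to guarantee that the radii can be chosen so that the induced retraction is continuous.
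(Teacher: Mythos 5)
Your proposal is correct in outline, but it takes a genuinely different route from the paper. You stratify $A$ itself, surround each stratum by a shrunk control tube, take $U$ to be the union of these tubes, and assemble the retraction by an induction over strata, using the control relations to make the stratumwise radial contractions agree on overlaps. The paper instead adjoins the complement as an open top stratum: it writes $W = A_\circ \cup \bigcup_\alpha A_\alpha$ with $A_\circ = W\setminus A$, takes control data for this stratification of the ambient open set $W$, and invokes the collar theorem \cite[Thm.~3.9.4]{pflaum2001analytic} to produce a single proper map $H : Q\times[0,1]\to W$ (with $Q$ an $(n-1)$-manifold in the complement) whose image is a neighborhood of $A$, with $H(Q\times\{0\})=A$; then $U = H(Q\times[0,1[)$, $V=H(Q\times[0,1/2[)$, and the deformation retractions are just collapse of the interval factor. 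The paper's route packages the delicate gluing step --- which you correctly identify as the main obstacle --- entirely into the cited collar theorem, so that $U$, $V$, the nesting $\overline{V}\subset U$, and both retractions drop out of the product structure at once; your route exposes that gluing and defers it to Mather--Pflaum in a less specific way, and in exchange is closer to the classical ``union of tubes'' picture and generalizes to retracting onto a closed union of strata inside a larger stratified set. Two small points to watch in your version: the statement only requires that $X$ be a deformation retract of $V$, so you need not insist that the \emph{same} homotopy $H$ preserves $V$ (re-running the construction with the halved radii suffices); and, as in the paper, the top-dimensional case where $A$ contains an open stratum of $\R^n$ degenerates (the tube is the stratum itself) and is handled by treating $A^\circ$ separately.
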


\begin{proof}
  Without loss of generality, we can assume that $A$ is connected, otherwise we perform the following construction 
  for each connected component seperately. 
  Since $A$ is locally closed in $\R^n$, there exists an open connected subset $W \subset \R^n$ such that $X$ is 
  a relatively closed subset of $W$.  

  Let us first assume that $\dim A < n$. Choose a Whitney (B) stratification of 
  $A$ with decomposition $A = \cup_{\alpha \in J} A_\alpha$ into the strata $A_\alpha$. Put $A_\circ := W \setminus X$,
  where we assume without loss of generality that $\circ \notin J$. Put $I = J \cup \{ \circ \}$. Then
  the decomposition $ W = \cup_{\alpha \in I} A_\alpha$ defines a Whitney (B) stratification of $W$. 
  According to the preceding remark, there exist control data for this stratification. Hence, by the (proof of) 
  \cite[Thm.~3.9.4]{pflaum2001analytic} there exists an $(n-1)$-dimensional manifold $Q$, and a proper
  continuous mapping $H: Q \times [0,1] \to W$ with the following properties:
  \begin{enumerate}
    \item
     $H\big( Q \times ]0,1[ \big) \subset A_\circ$ and $H|_{Q\times ]0,1[}$ 
     is a smooth embedding. 
  \item
     $H(Q \times \{ 0 \} ) = A$.
  \item
     The image of $H$ is a neighborhood of $A$ in $W$.
  \end{enumerate}
  But then $U := H \big( Q \times [0,1[ \big) $ and  $V := H \big( Q \times [0, 1/2[ \big) $ are neighborhoods
  of $A$ with the desired properties.  

  Now assume that $\dim A = n$. Let $A^\circ \subset A$ be the open dense stratum of $A$, and 
  $A' := A \setminus A^\circ$. Construct open neighborhoods $U'$ and $V'$ of $A'$ as before. 
  Then $U:= U' \cup A^\circ$ and $V:= V' \cup A^\circ$ have the desired properties.  
\end{proof}

%%% Local Variables:
%%% mode: latex
%%% TeX-master: "ChrPfl"
%%% End:

\section{Proof of the Main Theorem}\label{proof}

In this section we give a proof of the main technical result, Prop.~\ref{lemma}.  
In order to do this we need to shift our attention away from viewing our central 
objects as algebras, and begin viewing them as sheaves.  For example we have that 
$\Wf{X} = \Gamma\left( X ; \Ws{X} \right)$, where $\Ws{X}$ is a the sheaf of 
Whitney functions on the locally closed set $X$ of a smooth manifold $M$.  
This change of 
viewpoint allows us to work locally, and from \ref{blowup}, we know that 
locally semi-analytic sets behave in a  reasonable way. 

We begin with several definitions, most of which can be found in 
\cite{pflaum2001analytic}.  Let $M$ be a smooth manifold, and $X,U \subset M$ 
with $U$ open, $X\subset U$, and $X$ closed in $U$.  Let $\Ss{U}$ denote the 
sheaf of smooth functions on $U$.

\begin{definition}
The sheaf of Whitney functions, $\Ws{X}$, on $X$ is defined to be the sheaf 
% associated to the presheaf
	$$A \mapsto \Wf{A} = \Sf{W}/\Jf{A}{W},$$
where $A\subset X$ is an open subset of $X$, and $W \subset U$ is open, such that 
$W\cap X= A$.
\end{definition}

Similarly one can define a complex of sheaves by assigning to an open set $A$ the 
Whitney de Rham complex on $A$

\begin{definition}
The Whitney--de Rham complex of sheaves, $\Wds{X}{\bullet}$, of $X$ is the complex of 
sheaves 
% associated to the presheaf
	$$A \mapsto \Wdf{A}{\bullet} = \Df{W}{\bullet}/\Rdf{A}{W}{\bullet},$$
where $A$ and $W$ are as in the previous definition.  
\end{definition}

\begin{remark}
It should be noted that the two presheaves defined above satisfy the unique gluing 
property, hence are sheaves indeed. This is due to the Whitney Extension Theorem \ref{extension}, and the fact $\Ss{U}$ satisfies this property.  Furthermore, one can see 
that $\Ws{X} = \Wds{X}{0}$, and that by Prop.~\ref{canonical} the definitions are 
independent of the choice of the open set $W \subset M$ such that $A = W \cap X$.
\end{remark}

\begin{lemma}\label{fine}
For each $i\geq0$ the sheaf $\Wds{X}{i}$ is a fine sheaf of $\Ws{X}$-modules.
\end{lemma}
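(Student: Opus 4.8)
The plan is to deduce fineness from the existence of partitions of unity in the structure sheaf $\Ws{X}$, using the principle that a sheaf of modules over a fine (soft) sheaf of rings is itself fine. Accordingly there are two points to settle: that $\Wds{X}{i}$ really is a sheaf of $\Ws{X}$-modules, and that multiplication by suitable global functions provides a partition of unity. The module structure is immediate from the definitions. On an open $A = W \cap X$ the space $\Wdf{A}{i} = \Df{W}{i}/\Rdf{A}{W}{i}$ is the quotient of the $\Sf{W}$-module $\Df{W}{i}$ by the submodule $\Rdf{A}{W}{i} = \Jf{A}{W}\,\Df{W}{i}$; since $\Jf{A}{W}$ is an ideal of $\Sf{W}$, the multiplication $\Sf{W}\otimes\Df{W}{i}\to\Df{W}{i}$ descends to a well-defined action of $\Wf{A} = \Sf{W}/\Jf{A}{W}$ on $\Wdf{A}{i}$. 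These actions are compatible with restriction, so $\Wds{X}{i}$ is a sheaf of $\Ws{X}$-modules, and by Prop.~\ref{canonical} the construction is independent of the choice of ambient open set $W$.

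Next I would show that $\Ws{X}$ admits partitions of unity. Since $M$ is a smooth (paracompact) manifold, its sheaf of smooth functions $\Ss{U}$ is fine, i.e.~it admits smooth partitions of unity subordinate to any locally finite open cover. The natural quotient morphism from the restriction of $\Ss{U}$ to $X$ onto $\Ws{X}$, sending a germ $f$ to its class modulo $\Jf{X}{W}$, is surjective and carries partitions of unity to partitions of unity. Concretely, given a locally finite open cover $(A_j)$ of $X$, I would choose open sets $W_j \subset U$ with $A_j = W_j \cap X$, set $W = \bigcup_j W_j$, which is an open neighborhood of the closed set $X$ in $U$, and pick a smooth partition of unity $(\varphi_j)$ on $W$ subordinate to $(W_j)$. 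The images $\overline{\varphi}_j \in \Ws{X}$ then satisfy $\sum_j \overline{\varphi}_j = 1$ with $\operatorname{supp}\overline{\varphi}_j \subset A_j$.

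Finally I would transport these to the module sheaf. For each $j$, multiplication by $\overline{\varphi}_j$ defines an $\Ws{X}$-linear endomorphism $m_j$ of $\Wds{X}{i}$ with $\operatorname{supp} m_j \subset A_j$, and $\sum_j m_j = \id$ because $\sum_j \overline{\varphi}_j = 1$. This family $(m_j)$ is precisely a partition of unity for the sheaf $\Wds{X}{i}$ subordinate to $(A_j)$, which is the defining property of a fine sheaf. Hence $\Wds{X}{i}$ is a fine sheaf of $\Ws{X}$-modules.

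The main obstacle I expect is bookkeeping rather than conceptual: because $X$ is only locally closed in $M$, a given locally finite open cover of $X$ must be matched to a locally finite family $(W_j)$ in an ambient neighborhood $W$ on which ordinary smooth partitions of unity exist, all while keeping the supports controlled so that $\operatorname{supp}\overline{\varphi}_j \subset A_j$. This step relies on the paracompactness of $M$ (so the $W_j$ may be arranged locally finite in $W$), together with the fact, guaranteed by the Whitney Extension Theorem \ref{extension} and the independence statement of Prop.~\ref{canonical}, that the passage to Whitney functions does not depend on the chosen neighborhood $W$.
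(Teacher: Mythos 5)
Your argument is correct and is essentially the paper's own proof: the paper simply notes that fineness follows because $\Ss{U}$ is a fine sheaf (citing \cite[Sec.~1.5.4]{pflaum2001analytic} for details), and your proposal spells out exactly that mechanism --- smooth partitions of unity descend through the quotient to $\Ws{X}$ and then act on the module sheaves $\Wds{X}{i}$.
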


\begin{proof} 
  This is because $\Ss{U}$ is a fine sheaf.  
  See \cite[Sec.~1.5.4]{pflaum2001analytic} for more details.
\end{proof}

\begin{remark}
Notice that there is a surjective morphism of sheaves of $\R$-algebras
	$$\Map{J}{\Ss{U}}{\Ws{X}},$$
which induces the short exact sequence of chain complexes 
	$$0\rightarrow \Rdf{U}{X}{\bullet} \rightarrow \Df{U}{\bullet} \stackrel{J}{\rightarrow} \Wdf{X}{\bullet} \rightarrow 0,$$
in which $J$ is a CDGA morphism.
\end{remark}

We now restrict our attention to the case when $X$ is a semi-analytic set.  The main fact about semi-analytic sets that we will use is the embedded desingularization theorem by Hironaka.  One should consult section \ref{sa sets} for the precise statement of Hironka's theorem, and for the subsequent definitions.

The strategy that we employ, to prove Prop.~\ref{lemma}, is to prove that the chain complex of sheaves $\Wds{X}{\bullet}$ is a fine resolution of the locally constant sheaf $\R_X$ on $X$.  We have already seen in Lemma \ref{fine} that each of the sheaves is 
fine.  Hence we need only to prove that the complex is exact.  To do this we begin with the simplest case, when $X$ is locally a union of quadrants as in Def.~\ref{quadrants}, then via Hironaka's theorem we reduce the general case to this simpler case.

\begin{remark}
It should be noted that there is a canonical monomorphism of sheaves $\R_X \rightarrow \Ws{X}$ given by specifying constant functions.
\end{remark}

\begin{proposition}\label{poincare quadrants}
Let $X$ be a local union of quadrants, then the chain complex of sheaves, $\R_X \rightarrow \Wds{X}{\bullet}$, on $X$, is exact. 
\end{proposition}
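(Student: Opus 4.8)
The plan is to prove a Poincar\'e Lemma for the Whitney--de Rham complex over a cone, by constructing the Whitney analogue of the classical radial homotopy operator. Exactness of a complex of sheaves is a stalkwise condition, so I would fix a point $p\in X$ and argue with germs at $p$. By the hypothesis that $X$ is a local union of quadrants, there are real analytic coordinates $(z_1,\ldots,z_n)$ centered at $p$ in which $X$ agrees, near $p$, with a union of quadrants in the sense of Definition~\ref{quadrants}; since flatness of a function on $X$ and the de Rham complex are both natural under analytic coordinate changes, I may treat the $z_i$ as the standard coordinates on a small ball $W$ about the origin. In these coordinates $p$ is the origin $0\in X$, and a union of quadrants is invariant under the scaling $x\mapsto tx$ for $t\in(0,1]$; that is, $X$ is a cone with apex $0$. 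Writing $A:=X\cap W$, star-shapedness of $W$ shows $tx\in A$ whenever $x\in A$ and $t\in(0,1]$, so the radial homotopy $\Phi(x,t)=tx$ maps $A$ into itself.

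On the ambient complex $\Df{W}{\bullet}$ I would use the classical homotopy operator $K\colon\Df{W}{k}\to\Df{W}{k-1}$ attached to $\Phi$, given on $\omega=\sum_{|I|=k}a_I\,dx^I$ by
\[
  K\omega \;=\; \sum_{|I|=k}\sum_{\alpha=1}^{k} (-1)^{\alpha-1}
  \Big(\int_0^1 t^{k-1}a_I(tx)\,dt\Big)\, x_{i_\alpha}\,
  dx^{i_1}\wedge\cdots\wedge\widehat{dx^{i_\alpha}}\wedge\cdots\wedge dx^{i_k},
\]
which satisfies the standard identity $dK+Kd=\id-c_0$, where the augmentation $c_0$ sends a function $f$ to the constant $f(0)$ and every form of positive degree to $0$.

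The crucial step, and the one where the cone structure is indispensable, is to show that $K$ preserves the differential graded ideal $\Rdf{A}{W}{\bullet}=\Jf{A}{W}\,\Df{W}{\bullet}$ of forms flat on $A$, so that it descends to an operator $\ov{K}$ on $\Wdf{A}{\bullet}$. It suffices to verify that if $a\in\Jf{A}{W}$, then the coefficient $b(x):=\int_0^1 t^{k-1}a(tx)\,dt$ is again flat on $A$, because $\Jf{A}{W}$ is an ideal and hence $b\cdot x_{i_\alpha}$ remains flat. Differentiating under the integral sign gives $\partial^\beta b(x)=\int_0^1 t^{\,k-1+|\beta|}(\partial^\beta a)(tx)\,dt$ for every multi-index $\beta$. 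For $x_0\in A$ and each $t\in(0,1]$ one has $tx_0\in A$ by cone-invariance, so $(\partial^\beta a)(tx_0)=0$ since $a$ is flat on $A$; as the value at $t=0$ is negligible, $\partial^\beta b(x_0)=0$. Hence $b$ is flat on $A$, proving the claim. I expect this to be the main obstacle: it is exactly here that scale invariance of a union of quadrants is used, and without the cone structure the radial homotopy would not respect the flatness ideal. The same computation, together with $0\in A$, shows that $c_0$ descends to $\ov{c_0}$ on $\Wdf{A}{\bullet}$.

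Finally, passing $dK+Kd=\id-c_0$ to the quotient yields $\ov{d}\,\ov{K}+\ov{K}\,\ov{d}=\id-\ov{c_0}$ on $\Wdf{A}{\bullet}$, where the image of $\ov{c_0}$ is precisely the line of constants, i.e.\ the image of $\R_X\to\Ws{X}=\Wds{X}{0}$. In positive degree $\ov{c_0}$ vanishes, so any $\ov{d}$-closed class $[\omega]$ equals $\ov{d}\,\ov{K}[\omega]$ and is exact; in degree $0$ a $\ov{d}$-closed Whitney function $[f]$ equals $\ov{c_0}[f]=[f(0)]$ and is therefore constant, which gives exactness at $\Wds{X}{0}$ with kernel $\R_X$ (the constant $[1]$ is not flat on $A$, so $\R_X\hookrightarrow\Ws{X}$ is injective). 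Since every germ at $p$ of a $\ov{d}$-closed Whitney form is represented over some such ball $W$, and $\ov{K}$ exhibits it as exact, respectively constant, already over $W$, the stalk complex at $p$ is exact. As $p\in X$ was arbitrary, the complex of sheaves $\R_X\to\Wds{X}{\bullet}$ is exact.
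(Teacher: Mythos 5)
Your proposal is correct and follows essentially the same route as the paper: both exploit that a union of quadrants is invariant under radial scaling toward the center point, so the classical radial contraction induces a contracting homotopy that descends to the Whitney--de Rham quotient, yielding the Poincar\'e Lemma stalkwise. Your explicit verification (differentiating under the integral sign) that the homotopy operator $K$ preserves the flat ideal is a welcome elaboration of a step the paper's proof only asserts, but it is not a different argument.
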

\begin{proof}
Assume that $X \subset M$ where $M$ is an analytic manifold. For $x \in X$, choose 
a coordinate domain $U \subset M$, and 
$\ov{z}=\left(z_1,\ldots,z_n\right):U\rightarrow \R^n$ real analytic coordinates.  
By the assumption that $X$ is a local union of quadrants this can be done so that 
$\ov{z}\left(x\right) = 0$, and $\ov{z}\left(X\cap U\right)=Q$ is a union of 
quadrants, as in Definition \ref{quadrants}.  The map $\ov{z}$ induces an isomorphism
	$$\ov{z}^*:\Wdf{Q}{\bullet}\rightarrow \Wdf{X\cap U}{\bullet}.$$
In particular we have an isomorphism at the level of stalks
\begin{equation}\label{stalks}\ov{z}_x^*: \Wds{Q,0}{\bullet}\rightarrow \Wds{X,x}{\bullet}.\end{equation}
Choose an open convex neighborhood $V\subset \R^n$ containing $0$.
Define the radial homotopy 
	$$H:\: V\times I \rightarrow V, \:\left(x,t\right)\mapsto(1-t)x,$$
Obviously, $H$ is smooth, and for any $x \in Q$, one has $H\left(x,t\right)\in Q$ for all $t\in I$.  Thus for every $t \in I$ the function $H_t=H\left(-,t\right)$ preserves the ideal $\Jf{Q}{V}$.  Hence the chain homotopy $H^*$  on $\Df{V}{\bullet}$ induced
by $H$ descends to a chain homotopy on $\Wdf{V}{\bullet}$.  Furthermore, the following identity holds
	$$H^* d + d H^* = H_1^* - H_0^* \: ,$$
where $d$ here is the differential on the complex $\Wdf{Q}{\bullet}$.  Now we note that $H_1^*$ is the zero map in all degrees except zero where it acts by evaluating a Whitney function at $0$.  Similarly the map $H_0^*$ is easily seen to be the identity on $\Wdf{Q}{\bullet}$.  Thus the chain homotopy $H^*$ is a contracting homotopy and the chain complex $\Wdf{Q}{\bullet}$ has cohomology $\Hw{Q}{k}=0$ if $k \geq 0$, and $\Hw{Q}{k} = \R$ if $k=0$.  This implies that at the level of stalks the sequence $\R_{Q,0} \rightarrow \Wds{Q,o}{\bullet}$ is exact.  By \eqref{stalks} this is sufficient, and we have shown that $\R_X \rightarrow \Wds{X}{\bullet}$ is exact

\end{proof}

\begin{theorem}
Let $X$ be a local union of quadrants, then there is an isomorphism
\[
  \HS{X}{\R}{\bullet} \cong H^\bullet \left( \sWds{ X }{ \bullet } \right)= 
  \Hw{X}{\bullet}
\]
\end{theorem}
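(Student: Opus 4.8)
The plan is to recognize $\Wds{X}{\bullet}$ as a fine resolution of the constant sheaf $\R_X$ and then apply the abstract de~Rham theorem. First I would assemble the two ingredients already in hand. By Proposition~\ref{poincare quadrants} the augmented complex of sheaves
\[
  0 \longrightarrow \R_X \longrightarrow \Wds{X}{0} \longrightarrow \Wds{X}{1} \longrightarrow \cdots
\]
is exact, so $\Wds{X}{\bullet}$ is a resolution of $\R_X$; and by Lemma~\ref{fine} each term $\Wds{X}{i}$ is a fine sheaf.

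Next I would use that fine sheaves are acyclic. Since $X$ is a locally closed subset of $\R^n$, it is metrizable and hence paracompact, so each fine sheaf $\Wds{X}{i}$ is soft and therefore has vanishing higher sheaf cohomology (see \cite[Sec.~1.5]{pflaum2001analytic}). Consequently $\Wds{X}{\bullet}$ is an acyclic resolution of $\R_X$, and the standard comparison theorem for acyclic resolutions gives an isomorphism
\[
  H^\bullet\big( \sWds{X}{\bullet} \big) \cong \Hm^\bullet\left( X, \R_X \right).
\]
Because the global sections of the complex $\Wds{X}{\bullet}$ are by definition the Whitney--de~Rham forms $\Wdf{X}{\bullet}$, the left-hand side is exactly $\Hw{X}{\bullet}$, which yields the second equality asserted in the theorem.

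Finally I would identify the sheaf cohomology of the constant sheaf with singular cohomology. This identification is valid whenever the base space is paracompact and locally contractible. A local union of quadrants is locally contractible: the radial homotopy $H(x,t) = (1-t)x$ employed in the proof of Proposition~\ref{poincare quadrants} contracts each quadrant neighborhood of a point to that point, so $X$ admits a basis of contractible open sets. Hence $\Hm^\bullet(X,\R_X) \cong \HS{X}{\R}{\bullet}$, completing the proof.

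The step I expect to be the main obstacle is precisely this last identification, since the equivalence of \v{C}ech/sheaf cohomology of the constant sheaf with singular cohomology requires local contractibility of $X$ rather than mere paracompactness; fortunately the geometry of unions of quadrants, together with the radial contraction already constructed, supplies exactly this property, so no new analytic input beyond Proposition~\ref{poincare quadrants} is needed.
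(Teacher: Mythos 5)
Your proposal is correct and follows essentially the same route as the paper, which simply observes that Proposition~\ref{poincare quadrants} together with Lemma~\ref{fine} exhibits $\Wds{X}{\bullet}$ as a fine resolution of $\R_X$ and invokes the abstract de~Rham theorem. You have merely made explicit the hypotheses (paracompactness for acyclicity of fine sheaves, local contractibility for identifying sheaf cohomology of $\R_X$ with singular cohomology) that the paper leaves implicit.
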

\begin{proof}Lemma \ref{poincare quadrants} proves that $\R_X \rightarrow \Wds{X}{\bullet}$ is a fine resolution of the locally constant sheaf on $X$.  Hence the desired result.
\end{proof}

\begin{corollary}
Let $X \subset \C^n$ be an algebraic set that is a local normal crossing, then $\Hw{X}{\bullet} \cong \HS{X}{\R}{\bullet}$.
\end{corollary}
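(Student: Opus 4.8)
The plan is to reduce the corollary directly to the preceding theorem on local unions of quadrants, by showing that a complex algebraic normal crossing, regarded as a real semi-analytic subset of $\R^{2n}$, is a local union of quadrants in the sense of Definition \ref{quadrants}. Once this reduction is in place, the isomorphism $\Hw{X}{\bullet} \cong \HS{X}{\R}{\bullet}$ is immediate from that theorem.

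First I would fix a point $p \in X$ and use the normal crossing hypothesis to choose local complex analytic coordinates $(w_1, \ldots, w_n)$ centered at $p$ in which $X$ is, near $p$, a union of coordinate subspaces,
\[
  X = \bigcup_{S \in \calS} \big\{ w \mid w_j = 0 \text{ for all } j \in S \big\},
\]
for some finite collection $\calS$ of subsets of $\{1, \ldots, n\}$. Writing $w_j = x_j + \ii y_j$ produces real analytic coordinates $(x_1, y_1, \ldots, x_n, y_n)$ on the underlying real manifold $\R^{2n}$, under which the single complex equation $w_j = 0$ becomes the pair of real equations $x_j = 0$ and $y_j = 0$. Hence each set $\{ w_j = 0 \text{ for } j \in S \}$ becomes a real coordinate subspace.

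The key step is then to observe that every such real coordinate subspace is itself a finite union of quadrants. Indeed, the subspace on which the coordinates indexed by $S$ (both real and imaginary parts) vanish while the remaining $2(n - |S|)$ coordinates are unconstrained can be written as the union, over all sign assignments of those free coordinates, of the quadrants $Q_{\calI_0, \calI_+, \calI_-}$ in which $\calI_0$ collects the $S$-coordinates together with the free coordinates assigned the value zero, $\calI_+$ the free coordinates assigned a positive sign, and $\calI_-$ those assigned a negative sign. Taking the union over all $S \in \calS$ exhibits $X$ near $p$ as a union of quadrants with respect to the coordinates $(x_1, y_1, \ldots, x_n, y_n)$. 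Since $p \in X$ was arbitrary, $X$ is a local union of quadrants, which in particular confirms that $X$ is semi-analytic by the remark following Definition \ref{quadrants}.

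Finally, applying the preceding theorem on local unions of quadrants yields the desired isomorphism. The argument amounts to translating between complex and real coordinates and bookkeeping sign patterns; the only point that requires genuine care is the explicit verification that a coordinate subspace decomposes as a finite union of quadrants, which is elementary but must be carried out since a coordinate subspace is not itself a single quadrant.
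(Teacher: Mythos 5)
Your argument is correct and is exactly the reduction the paper has in mind: the paper states this corollary without proof as an immediate consequence of the preceding theorem, implicitly using that a normal crossing is, in the real analytic coordinates $(x_1,y_1,\ldots,x_n,y_n)$, locally a union of coordinate subspaces and hence a local union of quadrants. Your explicit verification that a coordinate subspace decomposes into the quadrants indexed by sign patterns of the free coordinates is the right (and only nontrivial) bookkeeping step.
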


\begin{remark}
Because we have the exact sequence of chain complexes
$$0\rightarrow \Rdf{U}{X}{\bullet} \rightarrow \Df{U}{\bullet} \stackrel{J}{\rightarrow} \Wdf{X}{\bullet} \rightarrow 0,$$
the above theorem proves more than the fact that $\Hw{X}{\bullet} \cong \HS{X}{\R}{\bullet}$.  It actually shows that locally $J^\bullet$ is a quasi-isomorphism of CDGA's.  Thus one may conclude that $J^\bullet$ is a quasi-isomorphism of CDGA's.  This is restated in the following corollary.
\end{remark}

\begin{corollary}
When $X \subset \R^n$ is locally a union of quadrants, and $U \subset \R^n$ is an open subset such that $X$ is a deformation retract of $U$ in $\R^n$ then the quotient map 
	$$\Map{J^\bullet}{\Df{U}{\bullet}}{\Wdf{X}{\bullet}}$$
is a quasi-isomorphism of CDGA's.  In particular, the complex 
	$$\Rdf{X}{U}{\bullet} = \Jf{X}{U}\Df{U}{\bullet}$$ is acyclic.
\end{corollary}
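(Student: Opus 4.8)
The plan is to show first that $J^\bullet$ induces an isomorphism on cohomology, and then to read off the acyclicity of the flat-forms complex from the long exact sequence. These two assertions are in fact equivalent: the morphism $J$ sits in the short exact sequence of complexes
\[
0 \to \Rdf{X}{U}{\bullet} \to \Df{U}{\bullet} \xrightarrow{J} \Wdf{X}{\bullet} \to 0
\]
recorded in the preceding remark, and its long exact cohomology sequence shows that $J^\bullet$ is a quasi-isomorphism if and only if $H^\bullet\big(\Rdf{X}{U}{\bullet}\big)=0$. So the whole content is to prove that $J^*$ is an isomorphism, and I would extract the acyclicity statement at the very end.

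The crucial point is to identify $J^*$ with the restriction map in singular cohomology. For this I would lift $J$ to a morphism of augmented complexes of fine sheaves on $U$. Writing $i : X \hookrightarrow U$ for the closed inclusion, the smooth de Rham complex $\R_U \to \Omega^\bullet_{\mathrm{dR},U}$ resolves the constant sheaf $\R_U$, while Proposition \ref{poincare quadrants} together with Lemma \ref{fine} shows that $i_*\R_X \to i_*\Wds{X}{\bullet}$ is a resolution of $i_*\R_X$ by fine sheaves (exactness holds on the stalks along $X$ by Proposition \ref{poincare quadrants}, and trivially off $X$). Since $J$ carries constant functions to constant Whitney functions, it is a morphism between these augmented resolutions covering the restriction of constants $\R_U \to i_*\R_X$:
\[
\begin{array}{ccccccc}
\R_U & \to & \Omega^0_{\mathrm{dR},U} & \to & \Omega^1_{\mathrm{dR},U} & \to & \cdots \\
\downarrow & & \downarrow{\scriptstyle J^0} & & \downarrow{\scriptstyle J^1} & & \\
i_*\R_X & \to & i_*\Wds{X}{0} & \to & i_*\Wds{X}{1} & \to & \cdots
\end{array}
\]
Applying $\Gamma(U,-)$ and passing to cohomology — legitimate because every sheaf in sight is fine, hence acyclic for global sections on the paracompact space $U$, and because $i_*$ is exact for a closed embedding — turns this ladder into a commutative square
\[
\begin{array}{ccc}
\Hd{U}{\bullet} & \xrightarrow{\ \cong\ } & \HS{U}{\R}{\bullet} \\
{\scriptstyle J^*}\downarrow & & \downarrow{\scriptstyle i^*} \\
\Hw{X}{\bullet} & \xrightarrow{\ \cong\ } & \HS{X}{\R}{\bullet}
\end{array}
\]
whose top arrow is the de Rham isomorphism, whose bottom arrow is the isomorphism $\Hw{X}{\bullet}\cong\HS{X}{\R}{\bullet}$ established above for local unions of quadrants, and whose right vertical arrow is the pullback $i^*$ induced by the inclusion.

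Finally, since $X$ is a deformation retract of $U$, the inclusion $i$ is a homotopy equivalence, so $i^*$ is an isomorphism; the commutative square then forces $J^*$ to be an isomorphism, and $J^\bullet$ is thus a quasi-isomorphism of CDGAs (it is a CDGA morphism by the earlier remark). Feeding this back into the long exact sequence of the displayed short exact sequence yields $H^\bullet\big(\Rdf{X}{U}{\bullet}\big)=0$, i.e.\ the complex of forms flat on $X$ is acyclic. The step I expect to require the most care is the identification of $J^*$ with $i^*$: one must resist arguing that $J$ is a quasi-isomorphism of sheaf complexes on $U$ — it is \emph{not}, since $\Omega^\bullet_{\mathrm{dR},U}$ resolves $\R_U$ while $i_*\Wds{X}{\bullet}$ resolves $i_*\R_X$, and these disagree at every point of $U\setminus X$. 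It is precisely the deformation-retract hypothesis, entering only through the invertibility of $i^*$, that bridges $\HS{U}{\R}{\bullet}$ and $\HS{X}{\R}{\bullet}$ and makes the global conclusion possible.
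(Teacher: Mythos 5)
Your proof is correct and follows essentially the same route as the paper: the paper's preceding remark derives the corollary from the short exact sequence $0 \to \Rdf{X}{U}{\bullet} \to \Df{U}{\bullet} \to \Wdf{X}{\bullet} \to 0$ together with the fine-resolution theorem $\Hw{X}{\bullet}\cong\HS{X}{\R}{\bullet}$ for local unions of quadrants, which is exactly the content of your ladder of augmented resolutions. Your version is in fact more careful than the paper's, which asserts that $J^\bullet$ is ``locally a quasi-isomorphism, thus globally''; your explicit identification of $J^*$ with $i^*$, and the observation that the deformation-retract hypothesis is what bridges $\HS{U}{\R}{\bullet}$ and $\HS{X}{\R}{\bullet}$ (since $J$ is not a stalkwise quasi-isomorphism off $X$), supplies precisely the step the paper leaves implicit.
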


We now move on to the more general case.  Let us set the notation for the remainder of this section.  Let $X \subset \R^n$ be a semi-analytic set, $x \in X$, and 
$U \subset \R^n$ open, with $x \in U$.  Let $A=X\cap U$.  After possibly shrinking
$U$, there exists an open set $V \subset \R^n$, and a proper surjective analytic map 
$\Map{\pi}{V}{U}$, such that $B=\pi^{-1}\left( A \right)$ is a local union of quadrants, 
and $\Map{\pi|_{V\setminus B}}{V\setminus B}{U\setminus A}$ is an isomorphism, 
with analytic inverse $\Map{s}{U\setminus A}{V\setminus B}$.  
By shrinking $U$ again, one can achieve that $A$ is contractible to $x$ in $A$. 
Note that because $A$ and $B$ are semi-analytic sets one can shrink 
both $U$ and $V$ by Cor.~\ref{retract} even further in such a way that $A$ and $B$ 
stay invariant and such that $A$ becomes a deformation retract of $U$ and $B$ 
a deformation retract of $V$.

These data yield the following commutative diagram of chain complexes

$$\xymatrix{ 0 \ar[r] & \Rdf{B}{V}{\bullet} \ar[r]  & \Df{V}{\bullet} \ar[r]^{J^\bullet_B}  & \Wdf{B}{\bullet} \ar[r]  & 0 \\
  0 \ar[r] & \Rdf{A}{U}{\bullet} \ar[r] \ar[u]^{\pi|^*_{V\setminus B}} & \Df{U}{\bullet} \ar[r]^{J^\bullet_A} \ar[u]^{\pi^*} & \Wdf{A}{\bullet} \ar[r]  \ar[u]^{\overline{\pi}^*}& 0 }$$

Since $B$ is a local union of quadrants we know that $J^\bullet_B$ is a quasi-isomorphism, and hence $\Rdf{B}{V}{\bullet}$ is acyclic.  We need to prove that $J^\bullet_A$ is a quasi-isomorphism.  To do this we use the fact that $\pi$ is almost everywhere an isomorphism, and show that $\pi|^*_{V\setminus B}$ is an isomorphism of chain complexes.  Then
$\Rdf{A}{U}{\bullet}$ has to be acyclic as well, and we have the desired result.

\begin{definition}\label{S}
Using the map $\Map{s}{U \setminus A }{V\setminus B }$, the inverse to $\pi^*|$, define the map 
	$$\Map{S}{\Rdf{B}{V}{\bullet}}{\Rdf{A}{U}{\bullet}}$$
by the following formula, for $\omega \in \Rdf{B}{V}{i}$
$$
   S\omega \left(z\right) = 
   \begin{cases} 
      \omega\left(s\left(z\right)\right) \circ \left( D \pi (z) \right)^{\otimes i}, & 
        \text{if $z \in U \setminus A$}, \\ 
	0, & \text{if $z \in A$}. 
   \end{cases}
$$
\end{definition}
	
It is not immediately clear that the map, $S$, defined above is actually well defined,
i.e.~that it maps to smooth forms indeed.  To this end we need to verify that $S\omega\left(z\right)$ vanishes to all orders as $z$ approaches $A$.  It is sufficient to prove 
that $S|_{\Jf{A}{U}}$ is well defined, since $S$ is just the free extension of 
$S|_{\Jf{A}{U}}$ from the $0$-th piece to the entire chain complex.

\begin{lemma}
The map $S$ is well defined.
\end{lemma}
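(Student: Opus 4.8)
The plan is to reduce the statement to a quantitative estimate near $A$ and then to let the infinite flatness on $B$ absorb the blow-up of the inverse $s$ of the resolution map $\pi$. By the reduction noted just before the statement it suffices to treat the $0$-th piece, i.e.\ to show that for every $f\in\Jf{B}{V}$ the function $Sf$, equal to $f\circ s$ on $U\setminus A$ and to $0$ on $A$, lies in $\Jf{A}{U}$. Away from $A$ there is nothing to prove, since $s$ is analytic on $U\setminus A$ and $f$ is smooth; the whole content is the behaviour of $f\circ s$ and of all its derivatives as $z\to A$. First I would fix the base point $x\in A$, shrink $U$ to a relatively compact neighborhood, and use that $\pi$ is proper to guarantee that $s(U\setminus A)$ stays inside a fixed compact set $K\subset V$ on which the estimates below are uniform.

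The argument rests on two quantitative inputs. First, since $f$ is flat on $B$, Taylor's theorem gives, for every multi-index $\beta$ and every $N$, a constant $C_{\beta,N}$ with
\[
  \bigl| D^\beta f(w) \bigr| \le C_{\beta,N}\, \operatorname{dist}(w,B)^N \quad \text{for } w\in K .
\]
Second, I would invoke the \L ojasiewicz inequality for subanalytic functions (see \cite{LojESA,bierstone1988semianalytic}) in two places. Applied to the subanalytic function $w\mapsto\operatorname{dist}(\pi(w),A)$, whose zero set on $K$ is exactly $B=\pi^{-1}(A)$, it yields constants $c,\nu>0$ with $\operatorname{dist}(w,B)\le c\,\operatorname{dist}(\pi(w),A)^{1/\nu}$; setting $w=s(z)$ gives
\[
  \operatorname{dist}\bigl(s(z),B\bigr) \le c\, \operatorname{dist}(z,A)^{1/\nu} .
\]
Applied to $\det D\pi$, whose zero set (the critical locus of $\pi$) is contained in $B$ because $\pi$ is an isomorphism off $B$, it gives a lower bound $|\det D\pi(w)|\ge c'\operatorname{dist}(\pi(w),A)^{\mu}$; via Cramer's rule and $Ds(z)=\bigl(D\pi(s(z))\bigr)^{-1}$ this bounds $|Ds(z)|$, and, differentiating this relation repeatedly, each derivative $D^\gamma s(z)$ by a fixed negative power $\operatorname{dist}(z,A)^{-p(|\gamma|)}$.

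With these in hand I would expand $D^\gamma(f\circ s)$ by the Fa\`a di Bruno formula into a finite sum of terms of the shape $D^\beta f(s(z))$ times products of derivatives $D^{\gamma_1}s(z),\ldots,D^{\gamma_r}s(z)$. In each such term the factor $D^\beta f(s(z))$ is bounded by $\operatorname{dist}(s(z),B)^N\le c^N\operatorname{dist}(z,A)^{N/\nu}$, while the product of $s$-derivatives is bounded by a fixed negative power $\operatorname{dist}(z,A)^{-P(\gamma)}$; choosing $N$ large enough makes every term, and hence $D^\gamma(f\circ s)(z)$, tend to $0$ faster than any prescribed power of $\operatorname{dist}(z,A)$ as $z\to A$. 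Thus each $D^\gamma(f\circ s)$ extends continuously by $0$ across $A$, and by the standard criterion that a function on $U\setminus A$ all of whose derivatives extend continuously by $0$ across the closed set $A$ defines a smooth function on $U$ flat on $A$ (cf.\ \cite{pflaum2001analytic}), we conclude $Sf\in\Jf{A}{U}$.

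Finally, the passage from functions to $i$-forms needs no new idea: in coordinates $S\omega$ has coefficients that are finite sums of products of a coefficient of $\omega$ evaluated at $s(z)$ --- which is flat on $B$ --- with entries of the $i$-th tensor power of the Jacobian of $s$ (equivalently, of $(D\pi)^{-1}$ along $s$), and the same \emph{flatness beats polynomial blow-up} estimate places these coefficients in $\Jf{A}{U}$. The main obstacle is the second paragraph: obtaining genuinely uniform control, over all orders of differentiation, of the inverse $s$ of the resolution map near the exceptional locus, and arranging the two \L ojasiewicz inequalities so that the single infinite-order flatness of $f$ dominates every polynomial blow-up of the $D^\gamma s$ simultaneously. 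Everything else is bookkeeping through the chain rule.
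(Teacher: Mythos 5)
Your proof is correct and follows the same overall strategy as the paper's: reduce to the degree-zero piece, bound the derivatives of $s$ by Cramer's rule together with a \L ojasiewicz lower bound on $\det D\pi$ near $B$, control higher derivatives of $s$ by induction (equivalently Fa\`a di Bruno), and let the infinite-order flatness of $f$ on $B$ absorb the resulting polynomial blow-up. The one substantive divergence is in how the distances are compared: the paper obtains the comparison $d(s(x),B)\geq C'\,d(x,A)$ by lifting rectifiable curves through $\pi$ and invoking the length estimate of Bierstone--Milman (their Lemma~6.11), applying \L ojasiewicz only once, to $\det D\pi$ against its own zero set; you instead invoke the subanalytic \L ojasiewicz inequality twice, once to compare $d(\cdot,B)$ with $d(\pi(\cdot),A)$ and once to bound $|\det D\pi|$ from below directly by a power of $d(\pi(\cdot),A)$. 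Your variant costs an unspecified exponent $1/\nu$ where the paper's curve argument yields exponent $1$, but since flatness beats any fixed power this is harmless, and your version is arguably cleaner in that it makes explicit which direction of the distance comparison is needed for the flatness factor versus the blow-up factor. Both hinge on the same two facts you isolate correctly: properness of $\pi$ (to stay in a compact set where the estimates are uniform) and invertibility of $D\pi$ off $B$ (so the zero set of $\det D\pi$ is contained in $B$).
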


\begin{proof} 
Because both $U$ and $V$ are smooth analytic manifolds of same dimension $n$, the 
derivatives $D\pi$ and $Ds$ each have non-zero determinant (where defined) and compose to be 
the identity.  That is to say that for any $x\in U\setminus A$,
	$$ D\pi \left( s \left(x\right) \right) Ds\left(x\right) = I_n.$$
By Cramers rule this means that one can write the partial derivative of the $i$-th coordinate 
function $s_i$ of $s$ with respect to the $j$-th variable $x_j$ of $U$ as 
	$$ \frac{\partial s_i}{\partial x_j} = \frac{ \| C_{ij} \left( D\pi \left(s\left(x\right)\right)\right) \|}{\| D\pi\left(s\left(x\right)\right) \|}.$$
Here, the vertical bars $\| - \|$ stand for the determinant, and $C_{ij}\left( a \right)$ means 
the $ij$-th cofactor matrix of the matrix $a$.

Let $K\subset \R^n$ be a compact subset contained in $U$.  Since $\pi$ is proper,
$L = \pi^{-1}\left(K\right)$ is also compact.  Thus on $L$ each partial derivative of $\pi$ 
is bounded so the numerator can be bounded from above by some constant $M$, since it is a sum 
of products of partial derivatives of $\pi$.  Furthermore, $\| D\pi \|$ is an analytic function 
on $V$.  Lojasiawicz's Inequality,\cite[Thm.~4.1]{malgrange1966ideals} entails that there exist
$C,r > 0$ such that one has, for any $y \in L$,
	$$\big| \| D\pi \left( y \right) \| \big| \geq C d\left(y,B\right)^r.$$
It follows, for $0\leq i,j \leq n$, that there are constants $c,r>0$ such that for any 
$x \in K$
	$$ \left| \frac{\partial s_i}{\partial x_j}\left(x\right) \right| \leq 
           \frac{c}{d\left( s\left(x\right)  , B \right)^r }.$$
One now needs to show that there exist constants $k$ and $s$ such that for any $x\in K$
	$$
          d\left(s\left(x\right),B\right) \geq kd\left(x,A\right)^s.
        $$
This can be done in such a way that $s=1$ and 
	$$
          k = n \sup_{z\in K, 1\leq i, j\leq n} \left| \frac{\partial \pi_i}{\partial y_j}\left(z\right) \right|.
        $$
Let us provide a proof of this. 
Let $b\in B$, $y \in V$, and $\gamma$ a rectifiable curve in $V$ connecting $y$ to $b$ in $V$. 
Then $\pi\circ \gamma $ is a curve in $U$ connecting $x = \pi\left(y\right)\in U$ to 
$a = \pi\left(b\right) \in A$.  Lemma 6.11 of Bierstone--Millman \cite{bierstone1988semianalytic} says, 
not only is $\pi\circ \gamma $ rectifiable but its geodesic length $\ell (\pi \circ \gamma )$ 
can be bounded as follows
	$$ 
          \ell ( \pi \circ \gamma ) \leq
          n \, \ell( \gamma ) \, \sup_{z \in \gamma, 1\leq i, j\leq n} 
          \left|\frac{\partial \pi_i}{\partial y_j}\left(z\right)\right|.
        $$
By Cor.~\ref{retract} we know that there exists an open set $W$, containing $A$, that is homotopy equivalent to $A$.  Furthermore we can choose a compact set $Z \subset U$ in such a way that $A \subset Z \subset W$, and that for every point in $z \in Z$ there is a curve $\gamma$ that satisfies the condition that 
$\ell( \gamma ) = d\left(z,A\right)$.  As one is only interested in the behavior of the function $s$ near $A$, it is sufficient to set $K = Z$.

The partial derivatives of $\pi$ are defined and continuous on $L$, so there is a constant $M$ that 
only depends on $\pi$ and $L$ such that, for any $\gamma$ in $L$, 
$$
  \sup_{z\in \gamma, 1\leq i, j\leq n} \left| \frac{\partial \pi_i}{\partial y_j}\left(z\right)\right| \leq M.
$$  
Combining this with the previous statement, and that the length of a curve between two points is 
always at least as large as the distance between the two points, one sees that 
$$ 
  d \left( x,a\right)\leq \ell ( \pi\circ \gamma ) \leq n \, d\left(y,b\right)M,
$$
Where $\gamma$ is chosen to be the curve in $L$ connecting $y$ to $b$ with 
$\ell(\gamma) = d\left(y,b\right)$.
Now note that, since $a = \pi\left(b\right) \in A$, 
$$
  d\left( x, A\right)\leq d\left( x, a\right).
$$
This yields 
$$ 
  d\left(x,A\right)\leq n\, M \, d\left(y,b\right).
$$
Furthermore, this is true for all points $b \in B$, so if one takes the infimum over all $b\in B$, and 
writes $y=s\left(x\right)$, then one achieves the desired result
$$
  C' d\left(x,A\right) \leq  d\left(s\left(x\right),B\right),
$$
where the constant $C'>0$ only depends on $\pi$ and $K$.
 
By induction and basic calculus, one shows that for each $\alpha \in \N^n$ the function 
$\partial^{\alpha}s_i $ is a quotient $P /Q $ where $P$ is a finite sum of finite products of partial 
derivatives of $\pi|_{V\setminus B}$, all of order less than $\alpha$, and 
$Q = \| D\pi \circ  s  \|^{m_{\alpha}}$ for some integer $m_{\alpha} \geq 1$.  
Thus, one derives, for the same reasons as above,  that for some $c_\alpha >0$
$$ 
  | \partial^{\alpha}s_i \left(x\right) | \leq \frac{c_{\alpha}}{d\left(x,A\right)^{m_{\alpha}} }
  \text{ for all $x \in K$ and  $1\leq i \leq n$}.
$$
This means that for every $g \in \calJ^\infty (B,V) $ the partial derivatives of 
the function
$$
  Sg : U \rightarrow \R, \: x \mapsto 
  \begin{cases}
    g (s(x)), & \text{if $x\in U\setminus A$}, \\
    0, & \text{if $x\in A$},
  \end{cases}
$$ 
vanish to all orders on $A$.  This is because the partials of $g$ vanish 
faster than $d\left(x,A\right)^k$ for all $k \in \N$. 
\end{proof}
	
\begin{remark}
It should be noted that this is one place where it is necessary to work with 
forms that vanish to all orders on a subset.  If $\omega$ vanished only to 
finite orders, then $S\omega$ 
would not have smooth coefficients.
\end{remark}
	
\begin{proposition}
The map $$\Map{S}{\Rdf{B}{V}{\bullet}}{\Rdf{A}{U}{\bullet}}$$ is an isomorphism of commutative differential graded algebras.  Therefore  
		$$\Hr{A}{U}{\bullet}=0.$$
\end{proposition}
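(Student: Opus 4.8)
The plan is to exhibit $S$ as a two-sided inverse, in the category of (non-unital) CDGAs, of the restricted pullback $\pi|^*_{V\setminus B}\colon \Rdf{A}{U}{\bullet}\to\Rdf{B}{V}{\bullet}$ appearing in the commutative diagram above. The geometric input is that $\pi$ and $s$ restrict to mutually inverse analytic diffeomorphisms between $U\setminus A$ and $V\setminus B$, so that on $U\setminus A$ the map $S$ is simply the pullback $s^\ast$. I would begin by recording the two identities $\pi\circ s=\id_{U\setminus A}$ and $s\circ\pi=\id_{V\setminus B}$, together with the conclusion of the preceding lemma that $S\omega$ is a \emph{smooth} form on $U$ that is flat on $A$ whenever $\omega$ is flat on $B$; this is exactly what guarantees that $S$ takes values in $\Rdf{A}{U}{\bullet}$, and likewise $\pi|^*_{V\setminus B}$ takes values in $\Rdf{B}{V}{\bullet}$ because $\pi$ is analytic and $\pi(B)\subset A$.

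Next I would check that $S$ is a morphism of complexes and of algebras. On the open set $U\setminus A$ this is immediate, since there $S=s^\ast$ and pullback by a smooth map commutes with $d$ and is multiplicative: $d(s^\ast\omega)=s^\ast(d\omega)$ and $s^\ast(\omega\wedge\eta)=s^\ast\omega\wedge s^\ast\eta$. To extend these identities across the closed set $A$ I would argue as follows: for each identity both sides are smooth forms on $U$ (using the preceding lemma for $S(d\omega)$ and $S(\omega\wedge\eta)$, noting that $d\omega$ and $\omega\wedge\eta$ are again flat on $B$), and both sides are flat on $A$; in particular both sides vanish at every point of $A$, so that two smooth forms which agree on $U\setminus A$ and vanish on $A$ must agree on all of $U$. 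Hence $Sd=dS$ and $S(\omega\wedge\eta)=S\omega\wedge S\eta$ hold on $U$, and $S$ is a morphism of CDGAs.

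The inverse relations I would verify pointwise. For $\alpha\in\Rdf{A}{U}{i}$ and $z\in U\setminus A$ one has $S(\pi^\ast\alpha)(z)=s^\ast\pi^\ast\alpha(z)=(\pi\circ s)^\ast\alpha(z)=\alpha(z)$, while for $z\in A$ both $S(\pi^\ast\alpha)(z)$ and $\alpha(z)$ vanish, the former because $S$ extends by zero and the latter by flatness of $\alpha$; thus $S\circ\pi|^*_{V\setminus B}=\id$. The symmetric computation based on $s\circ\pi=\id_{V\setminus B}$ gives $\pi|^*_{V\setminus B}\circ S=\id$. Consequently $S$ is an isomorphism of CDGAs with inverse $\pi|^*_{V\setminus B}$. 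Since $B$ is a local union of quadrants, the corollary to Proposition~\ref{poincare quadrants} shows that $\Rdf{B}{V}{\bullet}$ is acyclic; transporting this acyclicity through the isomorphism $S$ gives that $\Rdf{A}{U}{\bullet}$ is acyclic, i.e.\ $\Hr{A}{U}{\bullet}=0$.

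The single genuinely delicate ingredient, the smoothness and flatness of $S\omega$, has already been settled in the preceding lemma by means of the \L ojasiewicz inequality; granting it, no serious obstacle remains. The only point requiring care is the passage of the algebraic identities from the diffeomorphic part $U\setminus A$ to the singular locus $A$, and this is handled uniformly by the remark that every form appearing in these identities is flat on $A$ and therefore vanishes there. I expect this last step to be the main (though modest) obstacle, since it is the place where one must use, rather than merely the set-theoretic inverse $s$, the fact that the relative complexes consist precisely of forms flat on the relevant sets.
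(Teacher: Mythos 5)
Your proposal is correct and follows essentially the same route as the paper: the paper's proof consists precisely of the observation that $S$ and $\pi^*$ are mutually inverse (so that acyclicity of $\Rdf{B}{V}{\bullet}$ transports to $\Rdf{A}{U}{\bullet}$), leaning on the preceding lemma for the smoothness and flatness of $S\omega$. You simply supply the routine verifications (compatibility with $d$ and $\wedge$ across $A$ via flatness, and the pointwise inverse identities) that the paper dismisses as ``easy to see.''
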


\begin{proof} It is easy to see that for all $\omega \in \Rdf{B}{V}{\bullet}$ 
and $\tau \in \Rdf{A}{U}{\bullet}$ the following equalities hold:
$$ 
  \pi^* S \tau = \tau \text{ and }
  S \pi^* \omega = \omega \: .
$$
Thus the desired result.  
\end{proof}
From this proposition one derives
\begin{theorem} 
The map
 $$\Map{J^\bullet_{A}}{\Df{U}{\bullet}}{\Wdf{A}{\bullet}}$$
is a quasi-isomorphism of commutative differential graded algebras.
\end{theorem}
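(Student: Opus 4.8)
The plan is to read off the result directly from the short exact sequence of cochain complexes furnished by the bottom row of the preceding commutative diagram, namely
$$
  0 \rightarrow \Rdf{A}{U}{\bullet} \rightarrow \Df{U}{\bullet}
  \xrightarrow{J^\bullet_A} \Wdf{A}{\bullet} \rightarrow 0 .
$$
First I would recall that $J^\bullet_A$ is not merely a map of cochain complexes but a genuine morphism of CDGAs, since in each degree it is the canonical quotient by the differential graded ideal $\Rdf{A}{U}{\bullet}$, exactly as recorded in the remark preceding Prop.~\ref{poincare quadrants}. Consequently it remains only to verify that $J^\bullet_A$ induces an isomorphism in cohomology; its compatibility with the wedge products and the differentials then comes for free, and the claim that $J^\bullet_A$ is a quasi-isomorphism \emph{of CDGAs} follows at once.

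To establish the cohomology isomorphism, I would pass to the long exact sequence in cohomology attached to the short exact sequence above:
$$
  \cdots \rightarrow \Hr{A}{U}{k} \rightarrow \Hd{U}{k}
  \xrightarrow{\Hm^k(J^\bullet_A)} \Hw{A}{k}
  \rightarrow \Hr{A}{U}{k+1} \rightarrow \cdots .
$$
The immediately preceding proposition asserts precisely that $\Rdf{A}{U}{\bullet}$ is acyclic, that is $\Hr{A}{U}{\bullet} = 0$ in every degree. Feeding this vanishing into the long exact sequence collapses every third term, so that each connecting homomorphism is forced to be zero and $\Hm^k(J^\bullet_A)$ becomes an isomorphism for all $k$. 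Hence $J^\bullet_A$ is a quasi-isomorphism.

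I do not expect any genuine obstacle at this final stage, since the entire technical difficulty has already been discharged: the construction of the analytic section $s$, the proof via the \L{}ojasiewicz-inequality estimates that $S$ actually lands in forms vanishing to all orders on $A$, and the verification that $S$ is a two-sided inverse to $\pi|^*_{V\setminus B}$ and hence an isomorphism of complexes. Once the acyclicity of $\Rdf{A}{U}{\bullet}$ is in hand, the passage to $J^\bullet_A$ is a purely formal consequence of the long exact sequence, and — because $J^\bullet_A$ is a quotient by a differential graded ideal — the multiplicative structure is automatically respected, so nothing further need be checked to upgrade the cohomology isomorphism to a quasi-isomorphism of commutative differential graded algebras.
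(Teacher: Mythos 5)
Your proposal is correct and is essentially the paper's own argument: the paper's proof reads, in its entirety, ``The kernel of $J^\bullet_{A}$ is acyclic, and $J^\bullet_{A}$ is surjective,'' which is precisely the long-exact-sequence deduction you spell out from the short exact sequence $0 \rightarrow \Rdf{A}{U}{\bullet} \rightarrow \Df{U}{\bullet} \rightarrow \Wdf{A}{\bullet} \rightarrow 0$ together with the acyclicity of $\Rdf{A}{U}{\bullet}$ established in the preceding proposition. Your additional observation that the multiplicative structure is respected because $J^\bullet_A$ is the quotient by a differential graded ideal is a useful explicit remark that the paper leaves implicit.
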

\begin{proof}
The kernel of $J^\bullet_{A}$ is acyclic, and $J^\bullet_{A}$ is surjective.
\end{proof}

\begin{theorem}
Let $X \subset \R^n$ be a semi-analytic set.  Then $J^\bullet$ induces a 
canonical isomorphism between the Whitney--de Rham cohomology on $X$ and the 
singular cohomology of $X$ with real coefficients, 
$$\Hw{X}{\bullet} \cong \HS{X}{\R}{\bullet},$$ 
as graded algebras.
\end{theorem}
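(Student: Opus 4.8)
The plan is to deduce the global statement from the local one by sheaf theory, exactly as in the theorem following Proposition~\ref{poincare quadrants}: I will show that the augmented complex $\R_X\to\Wds{X}{\bullet}$ is a \emph{fine resolution} of the constant sheaf, and then invoke the abstract de~Rham theorem. Fineness of each $\Wds{X}{i}$ is already recorded in Lemma~\ref{fine}, so the only thing left to establish is exactness, i.e.~that the cohomology sheaves of $\Wds{X}{\bullet}$ are $\R_X$ in degree $0$ and vanish in positive degrees.

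For exactness I work stalkwise. Fix $x\in X$. The cohomology sheaf $\calH^i\big(\Wds{X}{\bullet}\big)$ has stalk $\varinjlim_{A\ni x} H^i\big(\Wdf{A}{\bullet}\big)$, the colimit running over open neighborhoods $A$ of $x$ in $X$. By Hironaka's theorem~\ref{blowup} the desingularization used in the theorem just proved is available on \emph{arbitrarily small} neighborhoods of $x$, and after the further shrinking supplied by the deformation-retract result~\ref{retract} I may assume that $A=X\cap U$ for an open contractible $U\subset\R^n$ onto which $A$ deformation retracts. Such $A$ are cofinal among the neighborhoods of $x$ in $X$, and for each of them the theorem just proved gives a CDGA quasi-isomorphism $\Df{U}{\bullet}\qism\Wdf{A}{\bullet}$, whence $H^i\big(\Wdf{A}{\bullet}\big)\cong\Hd{U}{i}$. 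Since $U$ is contractible, $\Hd{U}{i}=\R$ for $i=0$ and $\Hd{U}{i}=0$ for $i>0$. Passing to the colimit over this cofinal system shows $\calH^0\big(\Wds{X}{\bullet}\big)_x=\R$ and $\calH^i\big(\Wds{X}{\bullet}\big)_x=0$ for $i>0$; identifying $\calH^0$ with $\R_X$ via the constant functions, which on a connected open set are the only Whitney functions annihilated by $d$, completes the verification that $\R_X\to\Wds{X}{\bullet}$ is a resolution.

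Being a fine resolution on the paracompact space $X$, the complex $\Wds{X}{\bullet}$ is $\Gamma$-acyclic and therefore computes the sheaf cohomology of $\R_X$, so $\Hw{X}{\bullet}=H^\bullet\big(\sWds{X}{\bullet}\big)\cong H^\bullet(X;\R_X)$, exactly as in the quadrant case. Since $X$ is semi-analytic it is paracompact and locally contractible---indeed homotopy equivalent to a CW complex by Proposition~\ref{retract}---so $H^\bullet(X;\R_X)\cong\HS{X}{\R}{\bullet}$, which yields the isomorphism of graded vector spaces.

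It remains to promote this to an isomorphism of graded \emph{algebras}, which is the delicate point, since the fine-resolution argument by itself produces only a linear isomorphism. Here one uses that $\Wds{X}{\bullet}$ is a sheaf of CDGAs, its product induced by the wedge product, and that the augmentation $\R_X\hookrightarrow\Wds{X}{0}$ is a morphism of sheaves of rings; the multiplicative refinement of the abstract de~Rham theorem---comparing the fine multiplicative resolution with the singular cochain complex through a double complex whose products are compatible---then identifies the product induced by $\wedge$ on $H^\bullet\big(\sWds{X}{\bullet}\big)$ with the cup product on $H^\bullet(X;\R_X)\cong\HS{X}{\R}{\bullet}$. Equivalently, and this is the quickest route to functoriality of the product, the local-to-global argument above upgrades the local statement to Proposition~\ref{lemma}, so that the global quotient map $J^\bullet$ is a \emph{morphism} of CDGAs which is a quasi-isomorphism; a quasi-isomorphism of CDGAs automatically induces an isomorphism of graded algebras on cohomology, and composing with the (multiplicative) de~Rham isomorphism and the restriction isomorphism $\HS{U}{\R}{\bullet}\cong\HS{X}{\R}{\bullet}$ makes the comparison multiplicative. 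I expect this multiplicative upgrade to be the main obstacle, the exactness and fineness of the resolution being immediate consequences of the results already in place.
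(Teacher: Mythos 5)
Your proposal is correct and follows essentially the same route as the paper: the local quasi-isomorphisms $J^\bullet_A$ obtained from Hironaka's desingularization, fineness of the sheaves $\Wds{X}{i}$, and a sheaf-theoretic local-to-global step, with the identification $\HS{X}{\R}{\bullet}\cong\HS{W}{\R}{\bullet}$ coming from the deformation retract of Corollary~\ref{retract}. Your writeup is somewhat more explicit than the paper's about the stalkwise colimit over a cofinal system of good neighborhoods and about why the isomorphism is multiplicative (namely that $J^\bullet$ is already a CDGA morphism), but these are elaborations of the same argument rather than a different one.
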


\begin{proof} 
By Cor.~\ref{retract} there exists $W \subset \R^n$, an open set containing 
the semi-analytic set $X$ such that $X$ is a deformation retract of $W$.  
Then the above results show that the quotient map 
	$$ \Map{J^\bullet}{\Df{W}{\bullet}}{\Wdf{X}{\bullet}}$$
is locally a quasi-isomorphism.  Thus it is globally a quasi-isomorphism,
since it is induced from a morphism of sheaves.  Furthermore, $W$ is a smooth 
manifold so the deRham cohomology of $W$ is isomorphic to the singular 
cohomology of $W$.  Because $W$ is homotopy equivalent to $X$, the singular 
cohomology of $W$ is isomorphic to the singular cohomology of $X$.

It is also clear that $J^\bullet$ is independent of the resolution of 
singularities of $X$.
\end{proof}

The following statement summarizes the above work.

\begin{corollary}\label{Qism}
Let $X \subset \R^n$ be a semi-analytic set, let $W \subset \R^n$ be an open 
set containing the semi-analytic set $X$ such that $X$ is a deformation 
retract of $W$.  Then the quotient map 
	$$\Map{J^\bullet}{\Df{W}{\bullet}}{\Wdf{X}{\bullet}}$$
is a quasi-isomorphism of commutative graded differential algebras.
\end{corollary}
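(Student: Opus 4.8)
The plan is to establish that $J^\bullet$ induces an isomorphism on cohomology; since $J^\bullet$ is the quotient by the differential graded ideal $\Rdf{X}{W}{\bullet} = \Jf{X}{W}\Df{W}{\bullet}$, it is automatically a morphism of CDGAs, and a cohomology isomorphism then upgrades it to a quasi-isomorphism of commutative differential graded algebras. I would work from the short exact sequence of chain complexes
\[
  0 \rightarrow \Rdf{X}{W}{\bullet} \rightarrow \Df{W}{\bullet} \stackrel{J^\bullet}{\rightarrow} \Wdf{X}{\bullet} \rightarrow 0,
\]
but rather than claiming the kernel is acyclic, I would compute the two outer cohomologies separately, show that both are canonically $\HS{X}{\R}{\bullet}$, and check that $J^\bullet$ realizes the comparison between them.

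First I would compute $\Hw{X}{\bullet}$ sheaf-theoretically. Writing $\Wdf{X}{\bullet} = \Gamma\left(X, \Wds{X}{\bullet}\right)$, Lemma \ref{fine} gives that each $\Wds{X}{i}$ is a fine sheaf, so it suffices to prove that the augmented complex of sheaves $\R_X \rightarrow \Wds{X}{\bullet}$ is exact, i.e.\ a fine resolution of the locally constant sheaf $\R_X$. Exactness is a stalkwise condition, so I would localize at an arbitrary $x \in X$: setting $A = X \cap U$ on a small coordinate neighborhood $U$ and invoking Hironaka's embedded desingularization theorem \ref{blowup}, I obtain a proper surjective analytic map $\pi : V \rightarrow U$ with $B = \pi^{-1}(A)$ a local union of quadrants and $\pi$ an isomorphism off $B$. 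The union-of-quadrants case is settled by Proposition \ref{poincare quadrants} via the radial contraction, and the transfer isomorphism $S : \Rdf{B}{V}{\bullet} \to \Rdf{A}{U}{\bullet}$ of Definition \ref{S} carries the resulting acyclicity of the kernel downstairs, giving exactness of the stalk complex at $x$. Hence $\R_X \rightarrow \Wds{X}{\bullet}$ is a fine resolution and $\Hw{X}{\bullet} \cong H^\bullet(X, \R_X) = \HS{X}{\R}{\bullet}$.

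Next I would compute $\Hd{W}{\bullet}$ and match the two sides. Since $W$ is a smooth manifold, the de Rham theorem gives $\Hd{W}{\bullet} \cong \HS{W}{\R}{\bullet}$, and the hypothesis that $X$ is a deformation retract of $W$ (as produced by Corollary \ref{retract}) makes the inclusion $i : X \hookrightarrow W$ a homotopy equivalence, so that the restriction $\HS{W}{\R}{\bullet} \to \HS{X}{\R}{\bullet}$ is an isomorphism. The quotient map $J^\bullet$ is compatible with the constant-function augmentations $\R_W \rightarrow \Ds{\mathrm{dR},W}{\bullet}$ and $\R_X \rightarrow \Wds{X}{\bullet}$, because $J$ sends a constant to its constant jet along $X$; consequently, under the two de Rham–type identifications, $H^\bullet(J^\bullet)$ is precisely this restriction map. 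Being an isomorphism, it shows that $J^\bullet$ is a quasi-isomorphism of CDGAs, which is the assertion.

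The main obstacle is the stalkwise exactness over $X$, and within it the construction of the transfer map $S$ of Definition \ref{S}. A form flat on $B$ must pull back along the analytic inverse $s$ to a form that is not merely smooth on $U \setminus A$ but extends flatly across $A$, even though $s$ and all its derivatives blow up as one approaches $A$. Taming this blow-up is the sole genuinely analytic ingredient: the {\L}ojasiewicz inequality bounds $\partial^\alpha s_i$ by a negative power of $d(x,A)$, and a length-comparison argument yields $d(s(x),B) \geq C\, d(x,A)$, so that the rapidly vanishing flat form defeats the polynomially exploding Jacobian and $S\omega$ indeed vanishes to all orders along $A$. Everything else in the argument is formal homological algebra and sheaf theory.
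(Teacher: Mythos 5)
Your proposal is correct and follows essentially the same route as the paper: fineness of $\Wds{X}{\bullet}$, the radial-homotopy Poincar\'e lemma for unions of quadrants, Hironaka's desingularization, and the transfer map $S$ with the {\L}ojasiewicz and length-comparison estimates to get stalkwise acyclicity of the flat ideal, followed by the deformation-retract/de Rham comparison. The only (cosmetic) difference is that you package the final step as a comparison of two augmented fine resolutions rather than as ``locally a quasi-isomorphism of complexes of fine sheaves, hence globally one,'' which is the same argument; just make sure to record, as the paper does, that the neighborhoods $U$ in the stalk computation are shrunk so that $A=X\cap U$ is contractible and a deformation retract of $U$ (and $B$ of $V$), so that acyclicity of the kernel actually yields exactness of the augmented stalk complex.
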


%%% Local Variables:
%%% mode: latex
%%% TeX-master: "ChrPfl"
%%% End:

\section{An application to free loop spaces}
In this section we apply our main result to the computation of the cohomology 
of the free loop space $\calL X$ of a semianalytic set $X \subset \R^n$. Recall  
that $ \calL X = \calC \big( S^1 , X \big)$ is the space of continuous maps from the 
circle into $X$ with the compact-open topology. According to the fundamental work by
Goodwillie \cite{goodwillie}, Burghelea-Fiedorowicz \cite{burghelea-fiedorowicz}, and 
Jones \cite{jones} (cf.~also \cite[Thm.~5.54]{algebraicmodels}), there is, for every simply connected 
CW-complex $X$, a natural isomorphism of graded algebras  
\begin{equation}
\label{eq:homloopspcapl}
  \mathrm{H}^\bullet (\calL X) \cong \HH{A_{PL}\left(X\right)}{\bullet }  , 
\end{equation}
where $\HH{-}{\bullet}$ denotes the Hochschild homology functor on DGA's 
(see \cite[Sec.~5.3]{loday} for details on the Hochschild homology theory of DGA's). As 
before, the ground ring on both sides is $\mathbb{K}$, a field of characteristic zero.
Since Hochschild homology is invariant under quasi-isomorphisms of DGA's,
cf.~\cite[Thm.~5.3.5]{loday}, one can replace in Eq.~\eqref{eq:homloopspcapl}
$A_{PL}\left(X\right)$ by any quasi-isomorphic DGA. This implies that for 
a simply connected smooth manifold $M$, one can replace $A_{PL}\left(M\right)$ 
by $\Omega^\bullet (M)$, and obtains a natural isomorphism 
\begin{displaymath}
  \mathrm{H}^\bullet (\calL M) \cong \HH{ \Df{M}{\bullet} }{\bullet }  .
\end{displaymath}
Using our main result one can extend this result to semianalytic sets. 

\begin{theorem}
 Given a simply connected semianalytic set $X \subset \R^n$, there is a natural 
 quasi-isomorphism of graded algebras
 \begin{displaymath}
  \mathrm{H}^\bullet (\calL X) \cong \HH{ \Wcx{X} }{\bullet }  , 
\end{displaymath}
\end{theorem}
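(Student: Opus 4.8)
The plan is to obtain this statement by assembling three results that are by now at our disposal: the loop space formula~\eqref{eq:homloopspcapl}, the weak equivalence between the Whitney--de Rham complex and $A_{PL}(X)$ established in the proof of Main Theorem~2, and the invariance of Hochschild homology under quasi-isomorphisms of DGAs, \cite[Thm.~5.3.5]{loday}. Since the substantial analytic content has already been carried out in Section~\ref{proof}, the argument here is essentially one of bookkeeping.

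First I would note that, by Proposition~\ref{retract}, the simply connected semi-analytic set $X$ has the homotopy type of a simply connected CW-complex, so that~\eqref{eq:homloopspcapl} supplies a natural isomorphism of graded algebras $\mathrm{H}^\bullet(\calL X) \cong \HH{A_{PL}(X)}{\bullet}$. Next I would invoke the chain of quasi-isomorphisms of CDGAs produced in the proof of Main Theorem~2,
\[
   \Wcx{X} \lqism \Df{U}{\bullet} \lqism \ldots \qism A_{PL}\left(U\right) \qism A_{PL}\left(X\right),
\]
where $U$ is a neighborhood of $X$ onto which $X$ deformation retracts, furnished by Proposition~\ref{retract}. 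Every term appearing here is a non-negatively graded DGA and the ground ring $\mathbb{K}$ is a field of characteristic zero, so \cite[Thm.~5.3.5]{loday} applies to each arrow separately; composing the induced maps on Hochschild homology yields $\HH{A_{PL}(X)}{\bullet} \cong \HH{\Wcx{X}}{\bullet}$. Splicing this with the loop space isomorphism above produces the claimed identification $\mathrm{H}^\bullet(\calL X) \cong \HH{\Wcx{X}}{\bullet}$.

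The hard part will be naturality rather than the existence of the isomorphism. The identification~\eqref{eq:homloopspcapl} is natural with respect to arbitrary continuous maps, whereas, as recorded in the remark following Proposition~\ref{canonical}, the functor $\Wcx{-}$ is only functorial for smooth (indeed analytic) maps of Euclidean pairs. Accordingly, I would make precise that the asserted naturality is relative to this restricted class of morphisms, and then check that the zigzag above, together with the Loday isomorphisms, is compatible with the maps induced by such a morphism. The main obstacle is that the intermediate terms $\Df{U}{\bullet}$ and $A_{PL}(U)$ depend on the auxiliary neighborhood $U$, which is not canonical: one must verify that a different choice of $U$, and a smooth map extending a given morphism to neighborhoods, produce commuting squares after passing to Hochschild homology, so that the composite isomorphism is independent of these choices and functorial where required.
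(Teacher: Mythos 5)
Your proposal is correct and follows essentially the same route as the paper: the paper's proof likewise combines Eq.~\eqref{eq:homloopspcapl} with the quasi-isomorphism between $\Wcx{X}$ and $A_{PL}(X)$ from the proof of Main Theorem~2, together with the invariance of Hochschild homology under quasi-isomorphisms from \cite[Thm.~5.3.5]{loday}. Your additional attention to what ``natural'' should mean given the limited functoriality of $\Wcx{-}$ goes beyond the paper's two-line proof, which does not address that point.
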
 

\begin{proof}
  The claim follows directly from Eq.~\eqref{eq:homloopspcapl} and the fact that
  by the proof of our Main Theorem 2 the Whitney--de Rham complex $\Wcx{X}$ is 
  quasi-isomorphic as a DGA to $A_{PL}\left(X\right)$. 
\end{proof}

%%% Local Variables:
%%% mode: latex
%%% TeX-master: "ChrPfl"
%%% End:

\appendix
\section{Basic Facts About Whitney Functions}
\label{appendix}
The aim of this section is to give a basic exposition of  Whitney functions for the convenience 
of the reader. For further details, see Malgrange \cite{malgrange1966ideals}. 
There are two approaches to Whitney functions.  The first, more classical approach, uses the 
language of jets. In the second, the algebra of Whitney functions is defined as a quotient algebra 
of smooth functions by an ideal of flat functions. The goal of this section is to give both 
approaches, and show that they are equivalent by Whitney's Extension Lemma.
Moreover, we define Whitney functions in the second approach not only over closed subsets of euclidean space
but more generally over closed subsets of a smooth manifold $M$.

\subsection{The jet approach}

\begin{remark}
Let us fix the notation that will be used throughout this section.  Fix an $n$-dimensional smooth 
manifold $M$. Let $m \in \N$. Define the set 
\[
  \NI{m}{n} = \left\{ \alpha \in \N^n \st | \alpha | \leq m \right\},
\]
where $| \alpha | = \alpha_1 + \alpha_2 + \cdots \alpha_n$ 
for $\alpha = \left(\alpha_1,\cdots, \alpha_n\right)$.  For such an $\alpha$, and a 
smooth coordinate system $x: U \rightarrow \R^n$ over an open domain $U\subset M$ define the 
partial differential operator $\partial^{\alpha}_x$ (or briefly only $\partial^{\alpha}$) by
\[
  \partial^\alpha = \partial^\alpha_x = 
  \frac{\partial^{\alpha_1}}{\partial x_1^{\alpha_1}}\cdot \ldots \cdot 
  \frac{\partial ^{\alpha_n}}{\partial x_n^{\alpha_n}}.
\]
For $k \in \N$ or $k= \infty$, let $\Ckf{U}{k}$ denote the $k$-times differentiable functions on 
$U$. When $k=0$, the superscript will be omitted and $\Ckf{U}{}$ will represent the continuous 
functions on $U$.
\end{remark}

We start with defining jets on a locally closed subset $X$ of euclidean space $\R^n$. Choose an open $U\subset \R^n$ such that $X$ is relatively closed in 
$U$.  Fix $m \in \N$, and let $0 \leq k \leq m$.

\begin{definition}
   The space of \emph{$m$-jets on $X$} is defined to be
   \[
   \begin{split}
      \Jet{ X }{ m } &  =  \Ckf{ x(X\cap U) }{}^{\NI{ m }{ n} } = \\
      & = \big\{ (F_\alpha)_{ \alpha \in \NI{ m }{ n} } \mid F_\alpha \in \Ckf{ X }{} 
      \text{ for all $\alpha \in \NI{ m }{ n}$} \big\} .
   \end{split}
   \]
\end{definition}

\begin{definition}
   For $\beta \in \NI{m}{n}$, we denote the natural projection onto the $\beta$-th factor by
   $$\Map{P_{\beta}}{\Jet{X}{m}}{\Ckf{X}{}}, \: F = (F_\alpha)_{ \alpha \in \NI{ m }{ n} } \mapsto F_\beta .$$
   The \emph{$k$-th jet projection} is the function
   $$\Map{P_{k,m}}{\Jet{X}{m}}{\Jet{X}{k}}, \:  F = (F_\alpha)_{ \alpha \in \NI{ m }{ n} } 
   \mapsto \left( F_{\alpha} \right)_{ \alpha \in \NI{k}{n} }.$$
\end{definition}

\begin{remark}
   Since $P_{k,m}P_{m,l} = P_{k,l}$, for $k\leq m \leq l$, the collection $\left(\Jet{X}{k},P_{k,l}\right)$ forms an inverse system.
\end{remark}

\begin{definition}The space of infinite jets is defined to be the limit of the inverse system
	$$\Jet{X}{\infty} = \lim_{\stackrel{\longleftarrow}{m \in \N}} \left(\Jet{X}{m},P_{k,m}\right).$$
\end{definition}
For the remainder of this section we take $m \in \N$ or $m=\infty$, and $0 \leq k \leq m$.
\begin{definition}
    For a fixed $x \in X$, one defines the following functions:\\
    \begin{itemize}
    \item the \emph{$k$-th order Taylor approximation at $x$} 
    \[
    \begin{split}
       \Map{ T^{ k }_{ x } }{ \: & \Jet{ X }{ m } }{ \Sf{ U } },
       \: F \mapsto \left( y\mapsto \sum_{ \alpha \in \NI{ k }{ n } } F_{ \alpha } \left( x \right) \frac{ \left( x - y \right)^{ \alpha } }{ \alpha ! }\right),
    \end{split}
    \]
    \item the \emph{$k$-th jet function}
    \[
       \Map{ J^{ k } }{ \Sf{ U } }{ \Jet{ X }{ k } },\: 
       f  \mapsto \left( \partial^{ \alpha } f |_{ X } \right)_{ \alpha \in \NI{ k }{ n } },
    \]
    \item the \emph{$k$-th order Taylor jet at $x$} 
    \[
       \Map{ \widetilde{ T }^{ k }_{ x } }{ \Jet{ X }{ m } }{ \Jet{ X }{ k } },\: 
       \widetilde{ T }^{ k }_{ x } = J^k T^{ k }_{ x }
    \]
    \item and the \emph{$k$-th order Taylor remainder jet at $x$} 
    \[
       \Map{ R^{ k }_{ x} }{ \Jet{ X }{ m } }{ \Jet{ X }{ k } }, \:
       R^{ k }_{ x } = P_{k,m} - \widetilde{T}^{ k }_{ x }. 
    \]
    \end{itemize}
    
\end{definition}
For $F \in \Jet{ X }{ m }$, and $\beta \in \NI{ m }{ n }$, one can consider the function $\left( R^{ m }_{ x } \left( F \right)\right)_{\beta} \left( y \right)$ as 
a function on $X \times X $.  Denote this function by $\Map{r_{F,\beta}}{X\times X}{\R}$.

\begin{definition}
  A jet $F \in \Jet{ F }{ m }$ with $m\in \N$ is called a \emph{Whitney function of class $m$ on $X$}, if 
  \[
    r_{ F , \beta }\left( x , y \right) = o\left( | x - y |^{ m - | \beta | } \right), \quad (x,y) \in K \times K,
  \]
 for every compact subset $K\subset X$ and all $\beta \in \NI{ m }{ n }$. 
 The collection of all Whitney functions on $X$ of class $m$ will be denoted by $\WF{ X }{ m }$.
 
 The inverse limit 
 \[ 
   \WF{X}{\infty} := \lim\limits_{\stackrel{\longleftarrow}{m \in \N}} \left(\WF{X}{m},P_{k,m}\right)
 \]
 is called the space of \emph{Whitney function of class $\infty$ on $X$}.
\end{definition}

\begin{definition}
 For every compact $K\subset X$ and every $k\in \N$ with $k\leq m$, $m\in \N \cup \{ \infty \}$ define a seminorm $| \bullet |_{K,k} $ on 
 $J^m \left( X \right)$ by
 \[
    | F |_{K,k} = \sup \big\{ | f_{ \beta } \left( x \right) | \mid x\in K, \beta \in \NI{ k }{ n } \big\},
 \]
 and a seminorm $\| \bullet \|_{ K,k }$ on $\WF{ X }{ m }$ by
 \[
   \| F \|_{ K,k } = | F |_{K,k}  + \sup \big\{ | r_{ F , \beta } \left( x , y \right) | \mid \left( x , y \right) \in K \times K , \:
   \beta \in \NI{ k }{ n } \big\}.
 \]
\end{definition}

\begin{proposition}[cf.~Malgrange {\cite{malgrange1966ideals}}]
  The space $ \WF{X}{m}$ together with the seminorms $\| \bullet \|_{ K,k }$ has the following properties:
  \begin{itemize}
  \item For compact $X$ and $m\in \N$, the seminorm $\| \bullet \|_{ X,m }$ defines a norm on $\WF{X}{m}$.
        Together with this norm, $\left( \WF{ X }{ m }, \| \bullet \|_{ X , m } \right)$ is a Banach space.
  \item If $X$ is compact, $\WF{X}{\infty}$ together with the family of seminorms $\big( \| \bullet \|_{ X,k } \big)_{k\in \N}$
        is a Fr\'echet space. 
  \item If $X \subset \R^n$ is locally closed, and $\big( K_k \big)_{k\in \N}$ a compact exhaustion, then
        $\WF{X}{\infty}$ together with the family of seminorms $\big( \| \bullet \|_{ K_k , k } \big)_{k\in \N}$ is a Fr\'echet space. 
  \end{itemize}
\end{proposition}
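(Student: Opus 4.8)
The plan is to prove the three assertions in order and to obtain the two Fréchet statements from the Banach statement by an inverse-limit argument, using that $\WF{X}{\infty}=\varprojlim_{m}\WF{X}{m}$ by definition. That $\|\cdot\|_{X,m}$ is a norm (not merely a seminorm) is immediate: since $\|F\|_{X,m}\ge |F|_{X,m}=\sup\{|F_\beta(x)|\mid x\in X,\ \beta\in\NI{m}{n}\}$, the vanishing of $\|F\|_{X,m}$ forces every component $F_\beta$ to vanish identically on $X$, hence $F=0$; subadditivity and homogeneity are clear from those of the two summands.

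For completeness in the compact, finite-$m$ case, let $(F^{(j)})_j$ be Cauchy in $(\WF{X}{m},\|\cdot\|_{X,m})$. Because $|F^{(j)}-F^{(l)}|_{X,m}\le\|F^{(j)}-F^{(l)}\|_{X,m}$, each component sequence $(F^{(j)}_\beta)_j$ is uniformly Cauchy on the compact set $X$; as $\Ckf{X}{}$ is a Banach space under the supremum norm, $F^{(j)}_\beta$ converges uniformly to a continuous limit $F_\beta$, and I set $F:=(F_\beta)_{\beta\in\NI{m}{n}}$. The useful observation here is that for fixed $\beta$ the remainder $r_{F,\beta}(x,y)=(R^m_x F)_\beta(y)$ is \emph{linear} in the components $(F_\alpha)$, with coefficients that are monomials in $(x,y)$ bounded over the compact product $X\times X$; thus $|r_{F,\beta}|$ is dominated by a constant times $|F|_{X,m}$, so $F\mapsto r_{F,\beta}$ is continuous into $\Ckf{X\times X}{}$ and $r_{F^{(j)},\beta}\to r_{F,\beta}$ uniformly on $X\times X$.

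The technical heart, and the step I expect to be the main obstacle, is to verify that the limit jet $F$ genuinely belongs to $\WF{X}{m}$, i.e.\ that $r_{F,\beta}(x,y)=o\big(|x-y|^{m-|\beta|}\big)$ uniformly on $X$, and that $F^{(j)}\to F$ in $\|\cdot\|_{X,m}$. I would run a three-$\varepsilon$ argument: given $\varepsilon>0$, pick $j$ so large that the Cauchy tail makes $\|F^{(j)}-F\|_{X,m}$ small; for fixed $x$ and $y$ near $x$ the hypothesis that $F^{(j)}$ is Whitney dominates $|r_{F^{(j)},\beta}(x,y)|$ by $\varepsilon|x-y|^{m-|\beta|}$, while the linearity of $R^m_x$ lets one control the remainder of the difference $F-F^{(j)}$ and transfer its smallness into the required decay rate of $r_{F,\beta}$. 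This is exactly the estimate of Malgrange \cite{malgrange1966ideals}; the point is that one must exploit the smallness of the \emph{remainder of the difference}, not merely its boundedness. Once the little-$o$ condition for $F$ is secured, the uniform convergence of components and of remainders established above yields $\|F^{(j)}-F\|_{X,m}\to0$, so $\WF{X}{m}$ is complete.

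Finally I would deduce the Fréchet statements. For compact $X$ and $m=\infty$, the projective-limit topology on $\WF{X}{\infty}=\varprojlim_{m}\WF{X}{m}$ is exactly the topology induced by the countable separating family $\big(\|\cdot\|_{X,k}\big)_{k\in\N}$; as a countable inverse limit of Banach spaces it is a closed subspace of their product, hence locally convex, metrizable and complete, i.e.\ Fréchet. For locally closed $X$ with compact exhaustion $(K_k)_k$, the family $\big(\|\cdot\|_{K_k,k}\big)_{k\in\N}$ is again countable and separating, so the resulting topology is metrizable and locally convex. Completeness follows by a diagonal argument from the finite-$m$, compact case: given a Cauchy sequence, for each $k$ the inequality $\|\cdot\|_{K,m}\le\|\cdot\|_{K_k,k}$ (valid whenever $K\subset K_k$ and $m\le k$) shows it is Cauchy in every $\|\cdot\|_{K,m}$, hence converges on each $K_k$ to a class-$k$ Whitney function; these limits are compatible because the $K_k$ exhaust $X$, and assemble into a single $F\in\WF{X}{\infty}$ to which the sequence converges in each seminorm. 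This completes all three parts.
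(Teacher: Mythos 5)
The paper offers no proof of this proposition at all---it is stated with a bare reference to Malgrange---so the only comparison available is with the standard argument there, whose overall shape (componentwise limits, verification of the Taylor-remainder condition for the limit jet, inverse limits for the two Fr\'echet statements) your outline correctly reproduces. The norm axioms, the reduction of the compact $m=\infty$ case to a countable inverse limit of Banach spaces, and the diagonal argument for the locally closed case are all fine.

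The genuine gap sits exactly at the step you yourself flag as the technical heart, and it interacts badly with the seminorms as literally defined in the paper. As written, $\| F \|_{K,k}$ adds $\sup_{(x,y)\in K\times K} | r_{F,\beta}(x,y)|$ \emph{without} dividing by $|x-y|^{k-|\beta|}$; your own observation that this supremum is bounded by a constant times $|F|_{K,k}$ shows the paper's norm is then equivalent to the plain sup-norm of the components, and with that norm the completeness claim is false. Concretely, take $X=\{0\}\cup\{1/n : n\geq 1\}\subset\R$ and $m=1$: the jets $F^{(j)}=(F_0^{(j)},0)$ with $F_0^{(j)}(1/n)=1/n$ for $n\leq j$ and $F_0^{(j)}=0$ elsewhere each lie in $\WF{X}{1}$ (their supports are uniformly separated, so the remainder vanishes on all sufficiently close pairs), and they form a Cauchy sequence for the norm as written since $\|F^{(j)}-F^{(l)}\|_{X,1}\leq 3/(\min(j,l)+1)$; but the componentwise limit $F_0(1/n)=1/n$, $F_1=0$ violates $F_0(y)-F_0(x)=o(|x-y|)$ at $x=1/n$, $y=1/(n+1)$, so it is not a Whitney function. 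Consequently your three-$\varepsilon$ step---``transferring'' the smallness of $r_{F-F^{(j)},\beta}$ into the decay rate $o(|x-y|^{m-|\beta|})$---cannot work as stated: a small supremum of the remainder is not smallness relative to $|x-y|^{m-|\beta|}$ as $y\to x$. The proposition is true only for Malgrange's actual norm, in which the second summand is $\sup_{x\neq y}|r_{F,\beta}(x,y)|/|x-y|^{k-|\beta|}$; with that correction the transfer step becomes immediate (the Cauchy condition directly yields $|r_{F-F^{(j)},\beta}(x,y)|\leq\varepsilon|x-y|^{m-|\beta|}$), while your earlier domination of the remainder term by $|F|_{X,m}$ must be dropped, since it fails for the quotient. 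So the missing ingredient is not an estimate you can defer to Malgrange: it is the repair of the seminorm itself, without which the statement being proved is false.
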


\begin{definition}
  Let $X\subset \R^n$ be locally closed, and $F , G \in \WF{ X }{ m }$.  Write $F = \left( F_{ \alpha } \right)_{ \alpha \in \NI{ m }{ n } }$ and 
  $G = \left( G_{ \alpha }	\right)_{ \alpha \in \NI{ m }{ n } }$.  Define the product of $F$ and $G$ to be 
  \[ 
    FG := \left( H_{ \alpha } \right)_{ \alpha \in \NI{ m }{ n } }, \: \text{ where $ H_{ \alpha } = \sum_{ \beta + \gamma = \alpha } F_{ \beta } G_{ \gamma }$}.
  \]
\end{definition}

\begin{proposition}
  With the above defined product, $\WF{X}{m}$ becomes a commutative Fr\'echet algebra, and  $\Wf{X}$ is the inverse limit of 
  the Fr\'{e}chet algebras  $\WF{X}{m}$, $m\in \N$.
\end{proposition}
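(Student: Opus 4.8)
The plan is to prove the two assertions in turn: first that $\WF{X}{m}$ is a commutative Fr\'echet algebra for finite $m$, and then that $\Wf{X}$ arises as the inverse limit of these algebras. Since the preceding proposition already supplies the Fr\'echet \emph{space} structure on $\WF{X}{m}$, what remains is to check that the Cauchy product $FG = (H_\alpha)$ with $H_\alpha = \sum_{\beta+\gamma=\alpha} F_\beta G_\gamma$ again lands in $\WF{X}{m}$, that it obeys the algebra axioms, and that it is continuous for the seminorms $\|\cdot\|_{K,k}$. The algebra axioms are essentially formal consequences of the convolution formula: commutativity follows from the commutativity of the pointwise products together with the symmetry of the index set $\{(\beta,\gamma):\beta+\gamma=\alpha\}$, associativity from the associativity of the triple convolution, bilinearity from that of pointwise multiplication, and the jet $(\delta_{\alpha,0})_\alpha$ of the constant function $1$ serves as a two-sided unit (indeed $\sum_{\beta+\gamma=\alpha}\delta_{\beta,0}G_\gamma = G_\alpha$).

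The crux is well-definedness, namely that $H=FG$ again satisfies the Whitney remainder condition $r_{H,\beta}(x,y) = o\big(|x-y|^{m-|\beta|}\big)$ uniformly on compacta. The plan is to establish a Leibniz-type decomposition: inserting the order-$m$ Taylor data of $F$ and $G$ at $x$ into $H_\alpha = \sum_{\beta+\gamma=\alpha}F_\beta G_\gamma$ splits each component of the remainder jet $R^m_x(FG)$ into a finite sum of products, in each of which at least one factor is a component of $R^m_x F$ or $R^m_x G$ (hence $o(|x-y|^{m-|\cdot|})$ by hypothesis) while the remaining factors are components of $F$ or $G$, bounded on $K$ by $|F|_{K,m}$ and $|G|_{K,m}$, plus a purely polynomial tail of order exceeding $m$ contributing only $O\big(|x-y|^{m+1-|\beta|}\big)$. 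A product of a quantity that is $o(|x-y|^{m-|\beta|})$ with bounded quantities is again $o(|x-y|^{m-|\beta|})$, and summing the finitely many terms yields the claim. The same decomposition gives submultiplicative estimates of the form $\|FG\|_{K,m} \le C_{K,m}\,\|F\|_{K,m}\,\|G\|_{K,m}$, which establishes joint continuity of the multiplication and hence the Fr\'echet algebra structure. I expect this remainder estimate to be the main technical obstacle; alternatively one may circumvent it by invoking Whitney's extension theorem to lift $F,G$ to functions in $\Ckf{W}{m}$, multiplying there, and restricting, since the jet of a product of functions is exactly the convolution of their jets.

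For the second assertion, I would observe that the jet projections $\Map{P_{k,m}}{\WF{X}{m}}{\WF{X}{k}}$ are algebra homomorphisms: truncating $H_\alpha = \sum_{\beta+\gamma=\alpha}F_\beta G_\gamma$ to $|\alpha|\le k$ involves only the components $F_\beta,G_\gamma$ with $|\beta|,|\gamma|\le k$, so $P_{k,m}(FG) = (P_{k,m}F)(P_{k,m}G)$. Thus the convolution product is compatible with the transition maps of the inverse system $\big(\WF{X}{m},P_{k,m}\big)$. Since $\Wf{X}=\WF{X}{\infty}$ is by definition the inverse limit $\varprojlim_{m\in\N}\WF{X}{m}$ of topological vector spaces, and the transition maps are now continuous algebra homomorphisms, the limit inherits a canonical commutative Fr\'echet algebra structure whose product is computed componentwise by the same convolution formula. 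This identifies $\Wf{X}$ with $\varprojlim_{m}\WF{X}{m}$ as Fr\'echet algebras and completes the proof.
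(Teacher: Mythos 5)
The paper itself offers no proof of this proposition; it is stated as a standard fact, with the Fr\'echet structure of the previous proposition deferred to Malgrange. Your argument is correct and is the standard one. The Leibniz-type decomposition you describe is exactly what is needed: writing $F_\gamma(y)G_\delta(y)$ in terms of $\widetilde{T}^m_x$-components plus $R^m_x$-components, every term carries either a factor $(R^m_xF)_\gamma$ or $(R^m_xG)_\delta$ (which is $o(|x-y|^{m-|\gamma|})$, hence $o(|x-y|^{m-|\beta|})$ since $|\gamma|\le|\beta|$) multiplied by factors bounded on $K\times K$ by the seminorms, or else it belongs to $\partial^\beta$ of the degree-$>m$ polynomial tail of $T^m_xF\cdot T^m_xG$, which is $O(|x-y|^{m+1-|\beta|})$; this also yields the submultiplicative estimate $\|FG\|_{K,m}\le C_{K}\|F\|_{K,m}\|G\|_{K,m}$ and hence joint continuity. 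Your alternative route through the extension theorem is also valid and matches the paper's ``quotient by flat functions'' picture, though note that Theorem \ref{extension} is stated only after this proposition in the appendix, and identifying the quotient topology with the seminorm topology on $\WF{X}{m}$ requires the continuous splitting (finite $m$) or the open mapping theorem, so the direct remainder estimate is the cleaner self-contained option. The second assertion is, as you say, essentially definitional once the $P_{k,m}$ are seen to be continuous algebra homomorphisms.
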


\subsection{The quotient by flat functions approach}
Let $X \subset M$ be locally closed, and $U \subset M$ open with $X\subset U$ relatively closed. Let $m \in \N \cup \{ \infty \}$.

\begin{definition}
The ideal of smooth functions on $U$ that are \emph{$m$-flat on $X$} is
\[
\begin{split}
  &\Jfk{ X}{U}{m} =  \\ 
  & \hspace{0.5em} =\big\{ f \in \Sf{U} \mid  \partial_x^{\alpha} f |_{X\cap V} = 0 \: \text{ for all coordinates $x: V \rightarrow \R^n$ and 
                 $\alpha \in \NI{m}{n}$} \big\}.
\end{split}
\] 
\end{definition}

\begin{definition}
The \emph{algebra of Whitney functions of order $m$ on $X$} is defined to be the quotient
$$\WF{X}{m} = \Ckf{U}{m}/ \Jfk{X}{U}{m}.$$
\end{definition}

\subsection{Equivalence of the two approaches}

In this section, assume that $X \subset \R^n$ is locally closed, and that $X$ is relatively closed in the open subset $U\subset \R^n$.
For each $m \in \N \cup \{ \infty \}$ consider the map defined above as
\[
  \Map{J^m}{\calC^m (U) }{\Jet{X}{m}}, \: f \mapsto \left( \partial^{\alpha} f |_{X} \right)_{\alpha \in \NI{m}{n}}.
\]
By Taylors Theorem, each jet $\left( \partial^{\alpha} f |_{X} \right)_{\alpha \in \NI{k}{n}}$ with $k\in \N$ and $k\leq m$ satisfies the condition required to 
be a Whitney function of order $k$.  Hence, one can consider $J^m$ as a map 
\[
   \Map{J^m}{\calC^m (U) }{\WF{X}{m}}.
\]
This map is clearly an algebra homomorphism whose kernel is exactly the ideal $\Jfk{X}{U}{m}$.

\begin{theorem}[Whitney Extension Lemma, {\cite[Thm.~3.2 and Thm.~4.1]{malgrange1966ideals}}]\label{extension} 
  For each $m \in \N \cup \{ \infty \} $ the following sequence  is a short exact sequence:
  \[
    0\rightarrow\Jfk{X}{U}{m} \rightarrow \calC^m (U) \stackrel{J^m}{\rightarrow} \WF{X}{m} \rightarrow 0.
  \]
  If $m$ is finite, this sequence has a continuous linear splitting.
\end{theorem}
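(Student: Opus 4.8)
The plan is to treat the three assertions separately: exactness at $\calC^m(U)$, surjectivity of $J^m$, and the existence of a continuous linear splitting when $m$ is finite. The first is immediate from the definitions. A function $f \in \calC^m(U)$ lies in $\Jfk{X}{U}{m}$ precisely when $\partial^\alpha f|_X = 0$ for all $\alpha \in \NI{m}{n}$, and this is exactly the condition $J^m f = 0$. Hence $\ker J^m = \Jfk{X}{U}{m}$, which gives exactness at the left and middle spots; together with the surjectivity established below this produces the short exact sequence.

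The heart of the matter is surjectivity, i.e.\ the Whitney Extension Theorem proper: given a jet $F = (F_\alpha)_{\alpha \in \NI{m}{n}} \in \WF{X}{m}$ satisfying the Whitney estimates $r_{F,\beta}(x,y) = o\!\left(|x-y|^{m-|\beta|}\right)$, I would construct an $f \in \calC^m(U)$ with $J^m f = F$. Following Whitney, first decompose the open set $U \setminus X$ into a locally finite family of dyadic cubes $(Q_i)_i$ whose diameters are comparable to their distance to $X$, and choose a subordinate smooth partition of unity $(\varphi_i)_i$ satisfying the controlled derivative bounds $|\partial^\alpha \varphi_i| \le C_\alpha \operatorname{diam}(Q_i)^{-|\alpha|}$. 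For each cube fix a nearest point $x_i \in X$ and set
\[
  f(y) = \begin{cases} F_0(y), & y \in X, \\ \sum_i \varphi_i(y)\, T^m_{x_i}(F)(y), & y \in U \setminus X, \end{cases}
\]
where $T^m_{x_i}(F)$ is the order-$m$ Taylor approximation of the jet at $x_i$. Note that $f$ is manifestly linear in $F$.

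The main obstacle is the regularity analysis: one must show $f \in \calC^m(U)$ and that $\partial^\alpha f|_X = F_\alpha$ for every $\alpha \in \NI{m}{n}$. Away from $X$ this is clear, since there $f$ is locally a finite sum of smooth functions. The delicate point is continuity of the derivatives up to $X$. Near a point $x \in X$ I would differentiate the sum by the Leibniz rule and exploit $\sum_i \varphi_i \equiv 1$ to rewrite $f - T^m_x(F)$ as $\sum_i \varphi_i\bigl(T^m_{x_i}(F) - T^m_x(F)\bigr)$, thereby replacing each local Taylor polynomial by a difference that the Whitney remainders control. The cube geometry guarantees $|x_i - y| \lesssim |x - y|$ whenever $\varphi_i(y) \neq 0$, so the bounds $|\partial^\alpha \varphi_i| \lesssim \operatorname{diam}(Q_i)^{-|\alpha|}$ combine with the $o\!\left(|x-y|^{m-|\beta|}\right)$ decay to force $\partial^\alpha f(y) \to F_\alpha(x)$. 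This estimate is both the technical core and the only place where the Whitney condition, rather than mere existence of the jet, enters.

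Finally, for the continuous linear splitting when $m$ is finite, the extension operator $s\colon F \mapsto f$ just built is linear, and the very estimates above show that for each compact $K' \subset U$ the $\calC^m$-norm of $f$ on $K'$ is dominated by a Whitney seminorm $\|F\|_{K,m}$ for a suitable compact $K \subset X$; this is precisely continuity of $s$ for the stated topologies, and $J^m \circ s = \id$ by construction. For $m = \infty$ the same cube decomposition still yields surjectivity once one lets the Taylor degree used on $Q_i$ grow as $Q_i$ approaches $X$, which renders $f$ smooth; the continuous linear splitting, however, is genuinely harder in this case and is accordingly not asserted, as it would require an order-independent construction.
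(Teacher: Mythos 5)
The paper offers no proof of this statement---it is imported verbatim from Malgrange (Thm.~3.2 and Thm.~4.1 of the cited reference)---and your outline reproduces exactly the argument given there: the kernel identification by definition of $J^m$, the Whitney cube decomposition of $U\setminus X$ with a partition of unity satisfying $|\partial^\alpha\varphi_i|\lesssim\operatorname{diam}(Q_i)^{-|\alpha|}$, extension by Taylor polynomials at nearest points, the rewriting of $f-T^m_x(F)$ as $\sum_i\varphi_i\bigl(T^m_{x_i}(F)-T^m_x(F)\bigr)$ controlled by the Whitney remainders, and the observation that these same estimates give continuity of the (linear) extension operator for finite $m$. The sketch is correct, including your caveats that the $m=\infty$ surjectivity needs the truncation order on $Q_i$ to grow as the cube approaches $X$ (calibrated to the remainder estimates) and that no continuous linear splitting is asserted in that case.
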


 The Whitney Extension Lemma implies that the ``classical'' and the quotient approach lead to equivalent definitions of $\WF{X}{m}$.

%%% Local Variables:
%%% mode: latex
%%% TeX-master: "ChrPfl"
%%% End:
\nocite{MatSM,LojESA}
\bibliography{refs}
\end{document}